\newcommand{\R}{\mathcal{R}}
\newcommand{\M}{\mathcal{M}}
\newcommand{\E}{\mathcal{E}}
\newcommand{\A}{\mathcal{A}}
\newcommand{\s}{\mathcal{S}}
\newcommand{\Z}{\Bbb Z}
\newcommand{\CC}{\Bbb C}
\newtheorem{theorem}{Theorem}
\newtheorem{definition}[theorem]{Definition}
\newtheorem{proposition}[theorem]{Proposition}
\newtheorem{cor}[theorem]{Corollary}
\newtheorem{lemma}[theorem]{Lemma}
\newtheorem{problem}[theorem]{Problem}
\newtheorem{remark}[theorem]{Remark}
\numberwithin{theorem}{section}
\begin{document}
\title{On Higher-Dimensional Oscillation in Ergodic Theory}
%Enter your title between curly braces
\author{Ben Krause}
\address{UCLA Math Sciences Building\\
         Los Angeles,CA 90095-1555}
\email{benkrause23@math.ucla.edu}
\date{\today}
\maketitle

\begin{abstract}
We extend the results of \cite{J1} concerning higher-dimensional oscillation in ergodic theory in a variety of ways.
We do so by transference to the integer lattice \cite{C1}, where we employ technique from (discrete) harmonic analysis.
\end{abstract}

\section{Introduction}
Let $(X,\Sigma,\mu)$ be a non-atomic probability space, equipped with $T$ a measure preserving $\Z^d$-action
\[ T_y f(x):= f(T_{-y} x). \]

For $\{E_i\} \subset \Z^d$, define the averaging operators
\[ M_i f(x):= \frac{1}{|E_i|} \sum_{y\in E_i} (\tau_yf)( x);\]
the classical pointwise ergodic theorem of Birkhoff says that if $E_i= [0,i) \subset \Z^{1}$, then the one-dimensional averages
$\{ M_i f(x) \}$
converge pointwise $\mu$-almost everywhere for $f \in L^p(X,\Sigma,\mu), \ 1\leq p<\infty$. A standard proof proceeds by way of a density argument, where the key quantitative estimate
is that the one-dimensional maximal function
\[ f^{*,1}:=\sup_i |M_i f| \]
is of weak-type $(1,1)$, and strong-type $(p,p)$, $1 < p \leq \infty$. Explicitly, there exist absolute constants, $C_p, \ 1 \leq p \leq \infty$ so that
\[ \aligned
\lambda \mu \left( \left\{ f^{*,1} > \lambda \right\} \right) &\leq C_1 \|f\|_1, \text{ for $\lambda \geq 0$, and } \\
\left\| f^{*,1}  \right\|_p &\leq C_p \|f\|_p, \ 1< p \leq \infty. \endaligned\]
This result was generalized to higher dimensions by Wiener; a direct consequence of \cite[Theorem $II'$]{WI} is that if $E_i \subset \Z^d$ are a nested, increasing sequence of cubes which contain the origin, then the $d$-dimensional averages $\{ M_i f(x) \}$
converge pointwise $\mu$-almost everywhere, and the $d$-dimensional maximal function
\[ f^{*,d}:= \sup_i |M_i f| \]
is similarly weak-type $(1,1)$ and strong-type $(p,p), \ 1< p \leq \infty$.

A modern path to Wiener's result is through the transference principle of Calder\'{o}n \cite{C1}, which allows one to conduct the study of ergodic averages on the lattice, $\Z^d$:
in particular, it is enough to consider the discrete convolution operators $A_i,$ defined by
\[ A_i f(n) := \frac{1}{|E_i|} \sum_{m \in E_i} f(n-m).\]
An advantage to this perspective is that real analytic methods -- covering lemmas, Fourier analysis and further orthogonality techniques -- can be brought to bear in studying more general regions $\{E_i\} \subset \Z^d$ and pertaining the averaging operators $\{A_i\}$. Indeed, provided the collection of sets $\{ E_i \} \subset \Z^d$ being studied share some qualitative properties with an increasing collection of cubes \cite[\S\S 1-2]{S}:
\begin{itemize}
\item A one-parameter structure; and
\item Some geometric ``smoothness''
\end{itemize}
the maximal functions on the lattice,
\[ \sup_i |A_i f|\]
are of weak-type $(1,1)$ and strong-type $(p,p), 1< p \leq \infty$.

Obtaining pointwise convergence results of $\{ A_i f(n) \}$ for more exotic $\{E_i \} \subset \Z^d$ -- those for which the above two properties are relaxed, or absent --
does not necessarily follow from quantitative estimates on an appropriate maximal function, since the dense-subclass result is often unavailable in this setting.
Perhaps the most famous instance of this difficulty arose in the study of averages along the squares, i.e.\
\[ E_i := \{1,4,9,\dots, i^2\} \subset \Z^1.\]
Indeed, to prove pointwise convergence of the ergodic averages of $L^2$-functions along the squares, Bourgain \cite{B1} showed that for any lacunarily increasing sequence $\{i_k\}$, the
\emph{oscillation operator},
\[ \mathcal{O}f := \left( \sum_k \sup_{i_k < i \leq i_{k+1}} | A_{i_k} f - A_i f|^2 \right)^{1/2} \]
was of strong-type $(2,2)$. The $L^2$ result then anchored a density argument through which he was able to extend his result to all $p>1$.

In proving this result, Bourgain made use of the \emph{$s$-variation operators}, $s > 2$, classically used in probability theory to gain quantitative information on rates of convergence.
\footnote{One representative example which in fact appears in Bourgain's argument is L\'{e}pingle's inequality for martingales \cite{LE}.}
More precisely, Bourgain proved that, in the special case $E_i = [0,i) \subset \Z^1$, the $s$-variation operators
\[ \mathcal{V}^sf = \mathcal{V}^s_{\{E_i\}} f := \sup_{ i_1 < i_2 < \dots < i_N } \left( \sum_{k=1}^N | A_{i_k} f - A_{i_{k+1}} f|^s \right)^{1/s} \]
were of strong-type $(2,2)$ \cite[Corollary 3.26]{B1}.

These variation operators are more difficult to control than the maximal function $\sup_i |A_i f|$: for any $j$, one may pointwise dominate
\[ \sup_i |A_i f| \leq \mathcal{V}^{\infty} f + |A_{j}f| \leq \mathcal{V}^{s} f + |A_{j}f|,\]
where $ s < \infty$ is arbitrary. On the other hand, variational (or oscillation) estimates are a powerful tool for proving pointwise convergence when a density argument seems unavailable.

Since Bourgain's celebrated result, establishing variational estimates for families of averaging operators has been the focus of much research in ergodic theory. \footnote{Variation estimates have also been studied from a harmonic analysis perspective in the context of truncations of singular integral operators. We refer the reader to \cite{JSW} for further discussion.}

A fundamental paper in this direction is due to Jones et.\ al.\ \cite{J}, where it is shown that the one-dimensional variation operators $\{ \mathcal{V}^s \}$ are of weak-type $(1,1)$ and strong-type $(p,p), \ 1< p < \infty$.
In other words, the variation operators enjoy the same boundedness properties as their associated maximal function. The argument in \cite{J} proceeded by first controlling an oscillation operator adapted to $\{E_i\} = \{[0,i)\}$ and then using martingale-style techniques from probability theory; the approach to the oscillation operator itself, however, was driven by Fourier-based orthogonality arguments, initially found in \cite{B1}.

A subsequent paper of Jones, Rosenblatt, and Wierdl, \cite{J1}, refined the orthogonality methods used in \cite{J} by eliminating the Fourier-analytic techniques, and more closely following (dyadic) martingale-style arguments.
In so doing, the authors were able to establish analogous results in higher-dimensional settings for functions in the ``low''-$L^p$ regime, $1 < p \leq 2$.

To continue our discussion, we briefly introduce two representative operators studied in \cite{J1}: the ``pointwise'' square functions. In our current context, the significance of these operators is that establishing $L^p$ bounds leads directly to bounds on the corresponding oscillation and variation operators \cite[\S 1]{J1}:

Suppose that $\{E_i\}$ are cubes of side-length $i$, so that for $2^{k-1} \leq i < 2^k$,
\[ E_i \subset H_k:=\prod_{i=1}^d [-2^k,2^k).\]
In other words, the $\{E_i\}$ are displaced from the origin by an amount comparable to, or less than, their side lengths. Then, with
\[ A^kf(x) := \frac{1}{|H_k|} \sum_{m \in H_k} f(n-m), \]
we define the \emph{long} square function (on the integers) with respect the collection $(E_i)$ as
\[ \s^L_{\{E_i\}}f(x) := \left( \sum_k \left( \sup_{2^{k-1} \leq i < 2^k} | A_i f(x) - A^k f(x)|^2 \right) \right)^{1/2},\]
and the \emph{short} square function as
\[ \s^S_{\{E_i\}}f(x) := \left( \sum_k \left( \sup_{2^{k-1} \leq t_i < 2^k \text{ increasing}} | A_{t_i} f(x) - A_{t_{i+1}} f(x)|^2 \right) \right)^{1/2}.\]

A direct consequence of \cite[Theorems $A'$,$B'$]{J1} is the following:
\begin{theorem}
There exist absolute constants $C_1, C_p$, $1<p\leq 2$ so that
\[ \aligned
\left\| \s^*_{\{E_i\}}f(x) \right\|_{L^p(\Z^d)} &\leq C_p \| f \|_{L^p(\Z^d)} \\
\left\| \s^*_{\{E_i\}}f(x) \right\|_{L^{1,\infty}(\Z^d)} &\leq C_1 \| f \|_{L^1(\Z^d)}, \endaligned \]
where $* = L,S$.
\end{theorem}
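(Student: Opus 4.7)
The plan is to adapt the Jones--Rosenblatt--Wierdl strategy of \cite{J1}: first prove the $L^2$ estimates for $\s^L$ and $\s^S$ using orthogonality of the dyadic increments, then upgrade to the weak-type $(1,1)$ endpoint via a Calder\'on--Zygmund decomposition, and finally Marcinkiewicz-interpolate to obtain the $L^p$ bounds for $1 < p < 2$. The long and short square functions are amenable to parallel arguments because the kernels $A_i - A^k$ and $A_{t_i}-A_{t_{i+1}}$ share the same structural feature: each is a difference of normalized indicators of comparable-sized cubes, and so enjoys an integrated Lipschitz-type regularity at its natural scale.

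For $\s^L$, observe that $A^k$ is, up to a translation of size $O(2^k)$, essentially the dyadic conditional expectation at scale $2^k$. Consequently, for $2^{k-1}\le i<2^k$, the Fourier multiplier of $A_i - A^k$ is concentrated near $|\xi|\sim 2^{-k}$: it vanishes to first order at the origin and decays at infinity. A Rademacher--Menshov style subdivision of the range $[2^{k-1},2^k)$ combined with Plancherel gives $L^2$ control of $\sup_{2^{k-1}\le i<2^k}|A_if-A^kf|$ with only a logarithmic loss in $k$; summing in $k$ and exploiting the almost-disjoint frequency supports absorbs this loss and yields $\|\s^L f\|_2\lesssim\|f\|_2$. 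For $\s^S$, within each scale $k$ the Rademacher--Menshov lemma reduces the supremum over increasing sequences $t_i \in [2^{k-1},2^k)$ to $O(k)$ standard dyadic square functions, each of which obeys the analogous frequency-localized $L^2$ bound; the $k$-loss is again absorbed by the frequency localization.

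The main obstacle is the weak-type $(1,1)$ bound, for which a Calder\'on--Zygmund decomposition is called for. Given $f\in L^1(\Z^d)$ and $\lambda>0$, decompose $f = g + \sum_Q b_Q$ with $g$ essentially bounded by $\lambda$, each $b_Q$ mean-zero and supported on a dyadic cube $Q$, and $\|g\|_2^2\lesssim \lambda\|f\|_1$. The good part is absorbed by the $L^2$ theory just established, and the bad part reduces to showing that $\s^* b$ is weak-$L^1$ off a controlled enlargement of $\bigcup_Q Q$. This is extracted from the mean-zero property of each $b_Q$ together with the integrated Lipschitz regularity of the kernel of $A_i - A^k$ (respectively $A_{t_i}-A_{t_{i+1}}$), which for scales $2^k$ much larger than the sidelength of $Q$ produces a gain of $\sim \text{side}(Q)/2^k$; the complementary range of scales, where the kernel is too narrow to see the cancellation of $b_Q$, is handled directly by an $L^2$ estimate on the localized pieces. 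Interpolating between the $L^2$ and weak-$(1,1)$ bounds closes out the proof.
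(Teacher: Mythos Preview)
The paper does not give its own proof of this theorem: it is stated as ``a direct consequence of \cite[Theorems $A'$, $B'$]{J1}.'' The method of \cite{J1}---and of the closely related Proposition~\ref{L2} in the paper's \S2, where the shifted analogue is established---is deliberately \emph{non}-Fourier. One expands $f=\sum_k d_k$ in dyadic martingale differences adapted to the filtration $\{\sigma_k\}$ and bounds $\s_n^*(d_{n+j})$ pointwise, case-splitting on the sign of $j$ and using the JRW boundary-smoothness parameters $\iota(l),\varepsilon(l)$ to obtain summable decay in $|j|$. The Calder\'on--Zygmund step for weak $(1,1)$ is likewise driven by these geometric parameters rather than by kernel Lipschitz regularity. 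Your proposal instead routes through Fourier multipliers and Rademacher--Menshov; this is closer to the earlier \cite{J}, which \cite{J1} was written specifically to supersede, and it is a genuinely different approach.

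There is, however, a gap in your treatment of $\s^S$. The Rademacher--Menshov lemma controls maximal partial sums $\sup_N\bigl|\sum_{n\le N} a_n\bigr|$, not the $2$-variation $\sup_{(t_i)}\bigl(\sum_i |S_{t_{i+1}}-S_{t_i}|^2\bigr)^{1/2}$, and it is the latter that $\s_k^S$ is. These are not comparable pointwise, and you cannot reduce one to the other by a dyadic chaining argument alone. The \cite{J1} argument for $\s^S$ instead exploits the nesting $E_{t_i}\subset E_{t_{i+1}}$: passing from $\ell^2$ to $\ell^1$ and telescoping gives $\s_k^S f\lesssim \chi_{H_k}*|f|$ (this computation appears in the paper's \S2), and the cross-scale orthogonality comes from the boundary condition $\iota(l)$, not from multiplier decay. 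Separately, your claim that the Rademacher--Menshov loss for $\s^L$ is ``absorbed by the almost-disjoint frequency supports'' needs justification: the cubes $E_i$ need not be centered at the origin, so the multiplier of $A_i-A^k$ vanishes only to first order at $\xi=0$, and its high-frequency decay is only like $(2^k|\xi|)^{-1}$ in bad directions; whether this suffices to absorb a factor of $k$ per scale is not automatic.
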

The two operators, $\s^L,\s^S$, both measure the scales and locations where $f$ oscillates, i.e.\ differs from constant functions.
The content of these results -- and indeed the ideas that drive their proofs -- is that there is sufficient orthogonality between the various scales to ensure that, in an average sense, even a \emph{pointwise} measurement of oscillation is controllable by the size of the initial function. We refer the reader to $\S 2$, or to \cite[\S\S 3-4]{J1} for further discussion.

A key insight in \cite{J1} is that just as in the case of the maximal function, $\sup_i |A_i f|$, the $\{E_i\}$ do not actually need to be cubes for the pertaining square functions to enjoy the above control. Indeed, the above theorem holds provided the collection of sets $\{ E_i \}$ being studied shared the same qualitative properties with a nested collection of cubes as in the case of the maximal function: \footnote{For a more precise statement of these results, we refer the reader to \cite[\S 1]{J1}, or to \S 2 below;
for an excellent treatment of continuous analogues of the square functions introduced above, we refer the reader to the discussion of the \emph{intrinsic square function}, found in e.g.\ \cite[\S 6]{W}.}
\begin{itemize}
\item A one-parameter structure; and
\item Some geometric ``smoothness.''
\end{itemize}

In light of the analogous approaches to studying maximal functions and square functions in our current context, the following informal question seems natural:

\begin{quote}
To what (further) extent do the boundedness properties of the square functions under our consideration parallel those of the maximal functions?
\end{quote}

More precisely, we organize our study of the operators introduced in \cite{J1} according to the following aims:

First, we seek to extend our control of the square functions under the assumptions outlined \cite{J1}. An immediate concern is the behavior of the square function in the ``high"-$L^p(\Z^d)$ regime, which we investigate by using sharp-function techniques from harmonic analysis. We prove:

\begin{theorem}\label{hi}
If $\A= \{E_i\}$ is \emph{regular}, then the long and short square functions $\s_{\A}^{L},\s_{\A}^{S}$, are $L^p(\Z^d)$-bounded, $2<p<\infty$.
\end{theorem}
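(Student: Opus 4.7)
The plan is to apply the Fefferman--Stein sharp maximal function method, which is designed precisely to bootstrap an $L^2$ bound to the range $2 < p < \infty$ for a sublinear operator possessing enough ``kernel smoothness.'' Theorem $1.1$ (with $p = 2$) supplies the $L^2$ input, so it suffices to establish the pointwise estimate
\[
(\s^*_{\A} f)^{\#}(n) \lesssim \bigl(M(|f|^2)(n)\bigr)^{1/2}, \qquad * \in \{L, S\},
\]
where $M$ is the Hardy--Littlewood maximal operator on $\Z^d$ and $g^{\#}$ denotes the (dyadic) sharp function. Granting this bound, the Fefferman--Stein inequality followed by the $L^{p/2}$-boundedness of $M$ for $p > 2$ yields
\[
\|\s^*_{\A} f\|_{L^p} \lesssim \|(\s^*_{\A} f)^{\#}\|_{L^p} \lesssim \|M(|f|^2)\|_{L^{p/2}}^{1/2} \lesssim \|f\|_{L^p},
\]
after the standard a priori reduction to finitely supported $f$, for which all operators in sight are well-defined and finite.

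To prove the sharp-function bound, fix a dyadic cube $Q \ni n$ and split $f = f_1 + f_2$ with $f_1 := f \cdot \chi_{3Q}$. For the local piece, sublinearity together with the $L^2$ estimate give
\[
\frac{1}{|Q|} \sum_{m \in Q} \s^*_{\A} f_1(m) \leq \Bigl( \frac{1}{|Q|} \sum_{m \in Q} \bigl|\s^*_{\A} f_1(m)\bigr|^2 \Bigr)^{1/2} \lesssim \Bigl( \frac{1}{|Q|} \sum_{m \in 3Q} |f(m)|^2 \Bigr)^{1/2} \lesssim M(|f|^2)(n)^{1/2}.
\]
For the non-local piece one chooses the comparison constant $c_Q := \s^*_{\A} f_2(n_Q)$ at the center $n_Q$ of $Q$ and seeks a uniform bound $|\s^*_{\A} f_2(m) - c_Q| \lesssim M(|f|^2)(n)^{1/2}$ for $m \in Q$. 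This decomposes by scale: for scales $2^k \ll \mathrm{side}(Q)$ the averages $A_i f_2$ vanish on $Q$, since $f_2$ is supported outside a neighbourhood of $Q$; for scales $2^k \gg \mathrm{side}(Q)$, the geometric smoothness of the sets $E_i$ (their near-invariance under translation by $\mathrm{side}(Q)$) yields a quantitative bound of the form $|A_i f_2(m) - A_i f_2(n_Q)| \lesssim (\mathrm{side}(Q)/2^k) \cdot M(|f|^2)(n)^{1/2}$, and the $\ell^2$-summation inherent in the definitions of $\s^L,\s^S$ collapses the resulting geometric series to the desired bound.

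The principal obstacle is the non-local estimate at intermediate scales $2^k \sim \mathrm{side}(Q)$, where neither the support argument nor the smoothness argument is available in clean form. Here one must combine the cross-scale orthogonality that drives Theorem $1.1$ with the regularity hypotheses on $\A$ from \cite[\S 1]{J1}; the finitely many such scales are absorbed by applying the $L^2$ estimate directly to $f \cdot \chi_{CQ}$ on a bounded dilate of $Q$. For $\s^S_{\A}$, the inner supremum over increasing sequences in $[2^{k-1}, 2^k)$ must moreover be handled via the variational machinery of \cite{J1}; for $\s^L_{\A}$ the corresponding step exploits the smoothness of the dyadic average $A^k$ as a function of $k$. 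Once the pointwise sharp-function inequality is established, the $L^p$ conclusion for $2 < p < \infty$ follows mechanically as indicated above.
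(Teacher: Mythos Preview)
Your high-level strategy---bootstrap the $L^2$ bound to $p>2$ via a sharp-function inequality---matches the paper's. The execution, however, differs in a way that eliminates precisely the steps you leave vague.

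The paper does not estimate $(\s_{\A}^{*} f)^{\#}$ directly. Instead it (i) passes to the discretized square function $\s_{d}^{*}$ of \S2, whose $k$th summand $\s_{d,k}^{*} f$ is by construction constant on atoms of $\sigma_k$, and (ii) takes the sharp function (relative to the refined filtration $\{\tau_j\}$) of the \emph{square} $(\s_{d}^{*} f)^2$. For $x\in R\in\tau_j$ with $j(k')<j\le j(k'+1)$, nesting of atoms then gives
\[
(\s_{d}^{*} f)^2(x)=\sum_{k\le k'}|\s_{d,k}^{*}(f\cdot 1_{5R})(x)|^2 + c_R(f),
\]
where $c_R(f):=\sum_{k>k'}|\s_{d,k}^{*} f|^2$ is \emph{literally constant} on $R$. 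The entire sharp-function bound $\M^{\#}((\s_{d}^{*} f)^2)\lesssim M'(f^2)$ therefore reduces to a single invocation of the $L^2$ estimate on $f\cdot 1_{5R}$, with no non-local kernel-smoothness analysis whatsoever; the transfer back from $\s_{d}^{*}$ to $\s_{D}^{*}$ (and hence to $\s_{\A}^{*}$) is handled separately by an $A_1$-weight trick.

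Your route instead requires controlling $|\s_{\A}^{*} f_2(m)-\s_{\A}^{*} f_2(n_Q)|$ for $m\in Q$. For $\s_{\A}^{S}$ this difference involves the inner supremum over increasing sequences, and your line ``must be handled via the variational machinery of \cite{J1}'' is exactly where the substance would have to lie; the intermediate-scale patch is similarly unspecified. These gaps are not obviously fatal, but they are the hard part, and the paper's discretization-plus-squaring device sidesteps them completely. One minor correction: in the merely \emph{regular} (as opposed to \emph{cubic}) setting the pertinent maximal operator is $M'$ adapted to the rectangles $\{H_j'\}$, not the cubic Hardy--Littlewood operator; your pointwise bound should be phrased with the former.
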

We refer the reader to $\S 2$ for the precise definition of \emph{regularity}; informally, regular sequences $\{E_i\}$ share the above-mentioned qualitative similarities to nested cubes.

This result leads directly to new control over jump and variation inequalities:

For $\lambda > 0$, we define, as in \cite{J1},
\[ J( (A_i),\lambda )= J( (A_i(f,x),\lambda ) ) \]
as the largest $N$ for which there are increasing indices $(i_j)$ with
\[ |(A_{i_j} - A_{i_{j+1}})f(x)| > \lambda, \ 1 \leq j < N.\]

Then, arguing as in \cite{J1}, \S 1, under the assumption of regularity we have the following:
\begin{cor}
For $2<p< \infty$ there exist absolute constants $C_p$ so that for any $\lambda >0$,
\[ \left\| \lambda \cdot  J( (A_i),\lambda )^{1/2} \right\|_{L^p(\Z^d)} \leq C_p \|f\|_{L^p(\Z^d)}.\]
Moreover, for any $s > 2$,
\[ \left\| \mathcal{V}^s f \right\|_{L^p(\Z^d)} \leq C_p \|f \|_{L^p(\Z^d)}. \]
\end{cor}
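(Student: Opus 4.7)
The plan is to follow the reduction in \cite[\S 1]{J1}, which derives jump and $s$-variation inequalities from $L^p$-bounds on the short and long square functions in a manner that is independent of $p$. Since Theorem \ref{hi} now supplies those square function bounds for $p \in (2, \infty)$, the corollary follows by applying the same abstract machinery in the new range.

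In more detail, the key intermediate step is the (essentially pointwise) comparison
\[ \lambda \cdot J((A_i)(f,x),\lambda)^{1/2} \lesssim \s^L_{\A} f(x) + \s^S_{\A} f(x), \]
valid uniformly in $\lambda > 0$. The starting observation is the combinatorial fact that if indices $i_1 < i_2 < \dots < i_N$ witness $N$ jumps of size at least $\lambda$, then $\lambda\sqrt{N-1}$ is bounded by the $2$-variation of the sequence $(A_{i_j}f(x))$. One then splits the consecutive pairs $(i_j, i_{j+1})$ into those confined to a single dyadic window $[2^{k-1},2^k)$ and those crossing dyadic windows: the within-window pairs contribute to the short square function $\s^S_{\A}$ directly, and the crossing pairs are controlled by the long square function $\s^L_{\A}$ through dyadic telescoping across the cube averages $A^k$, together with routine triangle inequalities bridging $A^k$ to the neighboring $A_i$. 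Taking $L^p$-norms and invoking Theorem \ref{hi} then yields the first inequality of the corollary.

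For the $s$-variation with $s > 2$, the analogous splitting $\mathcal{V}^s f \leq \mathcal{V}^s_L f + \mathcal{V}^s_S f$ applies. The long component is bounded by $\s^L_{\A} f + \s^S_{\A} f$ via the same dyadic telescoping as above, using that $\ell^s \hookleftarrow \ell^2$ for $s > 2$ on the crossing pairs. The short component is bounded, with an $s$-dependent constant, by $\s^S_{\A} f$ through the same embedding applied within each dyadic window. One final application of Theorem \ref{hi} concludes.

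The pointwise comparisons in both steps are essentially combinatorial, relying on triangle inequalities and a dyadic decomposition, and are carried out in \cite[\S 1]{J1}; the only genuinely new input is the high-$L^p$ square function bound of Theorem \ref{hi}. The main item requiring care is verifying that the regularity hypothesis of \S 2 is the correct notion to support the dyadic telescoping through the cube averages, but regularity is formulated precisely to make this manipulation legitimate, so no real obstacle is anticipated.
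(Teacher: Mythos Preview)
Your proposal is correct and follows exactly the paper's own approach: the paper does not give a standalone proof of this corollary but simply invokes the reduction from \cite[\S 1]{J1}, which produces the pointwise comparison between the jump and variation quantities and the two square functions, and then feeds in Theorem~\ref{hi} as the new $L^p$-input for $p>2$. Your elaboration of how that reduction works (splitting into within-window and cross-window pairs, and the $\ell^2\hookrightarrow\ell^s$ embedding for $s>2$) is a faithful unpacking of what \cite[\S 1]{J1} does, so there is nothing to add.
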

We remark that by transference this implies the corresponding result for dynamical systems.
\begin{cor}
For $2<p< \infty$ there exist absolute constants $C_p$ so that for any $\lambda >0$,
\[ \left\| \lambda \cdot  J( (M_i),\lambda )^{1/2} \right\|_{L^p(X,\mu)} \leq C_p \|f\|_{L^p(X,\mu)}.\]
Moreover, for any $s > 2$,
\[ \left\| \mathcal{V}^s f \right\|_{L^p(X,\mu)} \leq C_p \|f \|_{L^p(X,\mu)}. \]
\end{cor}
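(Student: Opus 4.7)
The plan is to obtain this corollary from the preceding lattice-level corollary via the Calder\'on transference principle, as the author signals. Both the jump counting function $J$ and the $s$-variation $\mathcal{V}^s$ are pointwise monotone nondecreasing in the collection of admissible indices, so it suffices to prove a uniform bound for truncations to finitely many indices $i \leq I$ and then invoke monotone convergence in $I$.

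First I would fix $I$ and choose $R = R(I)$ large enough that $E_i \subset B_R(0)$ for all $i \leq I$. For $N \gg R$ and $x \in X$, form the trajectory cut-off
$$F_x^N(n) := f(T_n x) \cdot \chi_{\Omega_N}(n), \qquad \Omega_N := [-N,N]^d \cap \Z^d.$$
A direct computation using the $\Z^d$-action $T_{-m}T_n = T_{n-m}$ gives
$$A_i F_x^N(n) = M_i f(T_n x) \qquad \text{for all } n \in \Omega_{N-R}, \ i \leq I,$$
and hence, denoting by $U$ either of the nonlinear functionals $\lambda J(\cdot,\lambda)^{1/2}$ or $\mathcal V^s$ truncated to $i \leq I$, one has the pointwise identity $U((A_i)F_x^N)(n) = U((M_i)f)(T_n x)$ on the buffered region.

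Next I would raise this identity to the $p$-th power, sum in $n$ over $\Omega_{N-R}$, and apply the previous corollary (i.e.\ the lattice bound) to the function $F_x^N \in L^p(\Z^d)$, producing
$$\sum_{n \in \Omega_{N-R}} |U((M_i))f(T_n x)|^p \;\leq\; C_p^p \sum_{n \in \Omega_N} |f(T_n x)|^p.$$
Integrating in $x \in X$, Fubini together with the $T$-invariance of $\mu$ collapses each sum to the appropriate multiple of an $L^p(X,\mu)$-norm, yielding
$$|\Omega_{N-R}| \cdot \|U((M_i))f\|_{L^p(X,\mu)}^p \;\leq\; C_p^p \cdot |\Omega_N| \cdot \|f\|_{L^p(X,\mu)}^p.$$
Letting $N \to \infty$ the ratio $|\Omega_N|/|\Omega_{N-R}|$ tends to $1$, and then letting $I \to \infty$ together with monotone convergence delivers both assertions of the corollary.

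There is no genuine obstacle here: the only point demanding any care is the pointwise identity $A_i F_x^N(n) = M_i f(T_n x)$ on the buffered lattice window, which is a direct unwinding of definitions. The same single argument handles both $\lambda J(\cdot,\lambda)^{1/2}$ and $\mathcal V^s$, since each is a nonnegative functional depending only on the finite tuple of averages $(A_i f)_{i \leq I}$ at the given point.
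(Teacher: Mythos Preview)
Your proposal is correct and is precisely the Calder\'on transference argument the paper invokes (``by transference this implies the corresponding result for dynamical systems''); the paper gives no further details, and you have simply written them out. The only passage requiring care---the identification $A_i F_x^N(n)=M_i f(T_n x)$ on the buffered window and the monotone-convergence passage in $I$---is handled correctly.
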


%Moreover, we are able to establish the reverse inequalities, which strengthen the above-mentioned orthogonality relationships between martingale increments and pointwise measurements of oscillation.

%\begin{proposition}\label{rev}
%For $1<p<\infty$,
%\[ \|f \|_p \lesssim \| \s_{\A}^{*} f\|_p,\]
%$* = L,S$.
%\end{proposition}

To deepen the connection between square and maximal function, we also consider the behavior of square functions on the discrete $A_p(\Z^d)$-weighted classes. We are additionally motivated in this regard by the weighted estimates for one-parameter actions studied in \cite{LT}, \cite{GT}, and \cite{MT}, and by the weighted theory of ``rough'' singular integrals which satisfy the so-called H\"{o}rmander conditions \cite{LRT}, \cite{LMPR}.

We first recall a standard characterization of $A_p$ weights; we refer the reader to \cite[\S 7]{D1} or to \cite[\S 5]{S} for a more comprehensive treatment. We begin by recalling the discrete uncentered cubic Hardy-Littlewood maximal function.

\begin{definition} For $f : \Z^d \to \CC$, we define the discrete uncentered cubic Hardy-Littlewood maximal function:
\[ M_{HL} f(x) := \sup_{Q \ni x} \frac{1}{|Q|} \int_Q |f|,\]
where the supremum is taken over all cubes $Q$ which contain the point $x$.
\end{definition}

With this operator in hand, we are free to define and introduce some properties of $A_p(\Z^d)$ weights.

\begin{definition}
For $1 < p < \infty$, a positive function $w \in A_p(\Z^d)$ is a \emph{discrete $A_p$ weight} if $M_{HL}$ is bounded on $L^p(\Z^d)$. Explicitly, $w \in A_p(\Z^d)$ if and only if there exists an absolute $C_p > 0$ so that
\[ \aligned
\int_{\Z^d} |M_{HL} f|^p w &\leq C_p^p \int_{\Z^d} |f|^p w \; \text{ or } \\
\|M_{HL} f\|_{L^p(\Z^d, w)} &\leq C_p \|f\|_{L^p(\Z^d, w)}. \endaligned \]

A positive function $w \in A_1(\Z^d)$ is an \emph{$A_1$ weight} if there exists a constant $C=C(w)$ so that $M_{HL}w \leq C w$.

We say $w \in A_{\infty}(\Z^d)$ is an \emph{$A_{\infty}$ weight} if $w \in A_p(\Z^d)$ for some finite $p$.
\end{definition}

We isolate two important properties of $A_1(\Z^d)$ and $A_\infty(\Z^d)$ weights; the proofs of these facts can be found in e.g.\ \cite[\S 5]{S}:
\begin{itemize}
\item $A_1$ weights $w \in A_1$ satisfy $\frac{w(Q)}{|Q|} \leq B \inf_{x \in Q} w(x)$ for any cube $Q$, and an absolute $B= B(w)$; and
\item $A_\infty$ weights are automatically doubling: $w \in A_\infty$ automatically satisfy $w(2Q) \leq C w(Q)$ for any cube $Q$, and an absolute $C=C(w)$. Here $2Q$ denotes the cube concentric with $Q$, but with twice the side length.
\end{itemize}

We say that a collection of sets $\{E_i\}$ is \emph{cubic} if the maximal function $\sup_i |A_i f|$ is (up to constant factors) pointwise dominated by $M_{HL} f$. Under this natural assumption, we have the following theorem.

\begin{theorem}\label{weight}
If $\A = \{E_i\}$ is \emph{cubic}, then for
both the long and short square functions $\s_{\A}^{*}, \  *=L,S$ there exist absolute constants $C_p$ so that
\[ \int_{\Z^d} |\s_{\A}^{*}f|^p w \leq C_p \ \int_{\Z^d} |f|^p w \]
for any $A_p$-weight $w$, $1\leq p < \infty$.
\end{theorem}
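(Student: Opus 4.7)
The plan is to combine the unweighted $L^p$ bounds already established (Theorem \ref{hi} for $p>2$, and the $1<p\leq 2$ bounds recorded earlier in the introduction) with a Fefferman--Stein sharp-function estimate, and then invoke Rubio de Francia extrapolation to exhaust the $A_p$ classes for $1\leq p<\infty$.

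The core step I would carry out is a pointwise inequality of the form
\[
M^{\#}\bigl(\s^{*}_{\A}f\bigr)(x) \;\lesssim\; M_{HL}f(x),
\]
or, more realistically, the weaker surrogate
\[
\bigl(M^{\#}(\s^{*}_{\A}f)^{r}(x)\bigr)^{1/r} \;\lesssim\; M_{HL}f(x),\qquad 0<r<1,
\]
where $M^{\#}$ denotes the discrete sharp maximal function. Fix $x$ and a cube $Q\ni x$. Decompose $f=f_{1}+f_{2}$ with $f_{1}:=f\chi_{3Q}$. For the local part, the unweighted $L^{p}$ boundedness of $\s^{*}_{\A}$ together with Kolmogorov's inequality converts the $L^p$ estimate into an $L^{r}$-average of $\s^{*}_{\A}f_{1}$ over $Q$ bounded by a multiple of $|3Q|^{-1}\|f\|_{L^{1}(3Q)}\leq 3^{d}M_{HL}f(x)$. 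For the far part $f_{2}$, the cubic hypothesis on $\A$ (together with the geometric smoothness of the cubes $H_{k}$ used to define $A^{k}$) is used to show that $\s^{*}_{\A}f_{2}$ is nearly constant on $Q$: for scales $2^{k}$ much smaller than $\diam Q$, the kernels of $A_{i}-A^{k}$ have support disjoint from $f_{2}$ on $Q$; and for the remaining scales, the $\ell^{1}$-difference of these kernels under translations by $h\in Q$ is small relative to $|E_{i}|$, with the resulting error summable in $k$ and controlled by $M_{HL}f(x)$.

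With the sharp-function bound in hand, the Fefferman--Stein inequality $\|g\|_{L^{p}(w)}\lesssim\|g^{\#}\|_{L^{p}(w)}$, valid for all $w\in A_{\infty}$ and $0<p<\infty$, combined with the weighted boundedness of $M_{HL}$ on $L^{p}(w)$ for $w\in A_{p}$, immediately yields the claimed inequality for $1<p<\infty$ and any $w\in A_{p}$. The endpoint $p=1$ (with $w\in A_{1}$) I would handle either by Rubio de Francia extrapolation from any fixed $p_{0}>1$, which automatically supplies the weak-type $L^{1}(w)$ analogue, or, more hands-on, by a weighted Calder\'{o}n--Zygmund decomposition leveraging the $A_{1}$ property $w(Q)/|Q|\leq B\inf_{Q}w$ together with the unweighted weak-$(1,1)$ bound recorded earlier.

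The hard part will be the sharp-function estimate on the far piece. For the short square function the inner supremum is taken over \emph{all} increasing sequences $(t_{i})$ inside each dyadic shell, so the kernel-smoothness argument must be uniform in that choice; in effect one needs a square-function version of a discrete H\"ormander-type smoothness condition, strong enough that the sum in $k$ closes after the translation is performed. Extracting this uniform smoothness from the cubic hypothesis on $\A$ is the technical heart of the argument; once it is in place, the weighted theory follows along well-established sharp-function and extrapolation lines.
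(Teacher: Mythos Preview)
Your sharp-function route is genuinely different from the paper's, and the step you flag as ``the hard part'' is exactly where the argument is incomplete. The paper never proves a pointwise inequality of the form $M^{\#}(\s^{*}_{\A}f)\lesssim M_{HL}f$ for the weighted theorem. Instead it passes to the \emph{discretized} square functions $\s_D^{*}$ (which are constant on $\sigma_k$-atoms, so localization is exact rather than approximate) and proves two propositions: a Coifman-type two-weight $L^2$ inequality
\[
\int_{\Z^d} |\s_D^{*}f|^2\,w \;\lesssim\; \int_{\Z^d} |f|^2\,M_{HL}w
\]
obtained by slicing $\sigma_k$-cubes according to the size of $w(3P)/|3P|$ and invoking the unweighted $L^2$ bound on each slice; and a weighted weak-$(1,1)$ estimate for $A_1$ weights via Calder\'on--Zygmund decomposition. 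Interpolation gives $L^p(v)$ for $v\in A_1$, $1<p\le 2$, and Rubio de Francia extrapolation finishes. So the paper's anchor is an $A_1$-pair of estimates at $p=2$ and $p=1$, not a sharp-function bound.

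Your far-piece argument has a real obstruction. To get the $k$-sum to close you need, for each $y,y'\in Q$ with $Q$ at scale $2^m$ and $2^k\gtrsim 2^m$, a bound like $|A_tf_2(y)-A_tf_2(y')|\lesssim \iota(k-m)\,M_{HL}f(x)$. But the JRW smoothness only controls the \emph{measure} of the symmetric difference $(y-E_t)\triangle(y'-E_t)$; it says nothing about how $|f_2|$ distributes over that thin set. A Cauchy--Schwarz step recovers the decay factor at the cost of replacing $M_{HL}f$ by $(M_{HL}(f^2))^{1/2}=M_{HL,2}f$, which is bounded on $L^p(w)$ only for $w\in A_{p/2}$---not enough for the full $A_p$ range. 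The paper sidesteps this entirely: by working with $\s_D^{*}$, the ``high-scale'' tail is literally constant on the cube $R\in\tau_j$, so no translation estimate is needed (this is exactly the mechanism in Lemma~\ref{sharp}, which the paper uses for \emph{unweighted} high $L^p$ but not here). If you want to salvage the sharp-function route, you would likely have to pass through the discretized operators first---at which point the paper's direct two-weight argument is shorter.
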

%A definition of ``\emph{cubicity}'' can be found in $\S 4$; as the name suggests, ``cubic'' sequences more closely resemble nested sequences of cubes than regular sequences.

Again arguing as in \cite[\S 1]{J1}, this implies the following corollary for cubic families:
\begin{cor}
For $1<p< \infty$ there exist absolute constants $C_p$ so that for any $\lambda >0$,
\[ \left\| \lambda \cdot  J( (A_i),\lambda )^{1/2} \right\|_{L^p(\Z^d, w)} \leq C_p \|f\|_{L^p(\Z^d, w)}.\]
Moreover, for any $s > 2$,
\[ \left\| \mathcal{V}^s f \right\|_{L^p(\Z^d, w)} \leq C_p \|f \|_{L^p(\Z^d, w)}. \]
\end{cor}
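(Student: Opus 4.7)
The plan is to reduce both inequalities to pointwise bounds by the long and short square functions $\s_{\A}^L f$ and $\s_{\A}^S f$, and then to invoke Theorem \ref{weight}. The argument mirrors \cite[\S 1]{J1}, where these pointwise estimates are established in the unweighted context; the reduction is $w$-independent, so once the square functions are controlled on $L^p(\Z^d, w)$, the corollary follows.

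For the jump inequality, fix $x$ and a maximal increasing sequence $i_1 < \dots < i_N$ with $|A_{i_j}f(x) - A_{i_{j+1}}f(x)| > \lambda$ for $1 \leq j < N$. First I partition the indices by the dyadic shells $[2^{k-1}, 2^k)$. Consecutive pairs lying in distinct dyadic shells (\emph{long jumps}) are estimated by inserting the cube averages $A^k f$ via the triangle inequality
\[ |A_{i_j}f - A_{i_{j+1}}f| \leq |A_{i_j}f - A^{k_j}f| + |A^{k_j}f - A^{k_{j+1}}f| + |A^{k_{j+1}}f - A_{i_{j+1}}f|, \]
and their total squared contribution is bounded pointwise by $C(\s_{\A}^L f)^2$. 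Consecutive pairs inside a single dyadic shell (\emph{short jumps}) are directly controlled by the corresponding summand of $(\s_{\A}^S f)^2$. Since each jump contributes at least $\lambda^2$ to the relevant sum, one deduces the pointwise domination
\[ \lambda \cdot J((A_i), \lambda)^{1/2} \leq C \bigl( \s_{\A}^L f(x) + \s_{\A}^S f(x) \bigr).\]
Taking $L^p(\Z^d, w)$ norms and applying Theorem \ref{weight} then establishes the first assertion.

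For the $s$-variation with $s > 2$, I decompose an arbitrary increasing sequence in the same way. Within a single dyadic shell, the inequality $\|\cdot\|_{\ell^s} \leq \|\cdot\|_{\ell^2}$ (valid since $s > 2$) bounds the $s$-variation by the $2$-variation, which is exactly the $k$-th summand of $(\s_{\A}^S f)^2$; combining across $k$ with the same inclusion gives domination by $\s_{\A}^S f$. The cross-scale contribution is handled by telescoping through the cube averages $A^k f$: the middle differences $|A^{k_j}f - A^{k_{j+1}}f|$ collapse into an $\ell^2$-sum over $k$, while the endpoint remainders are absorbed by the supremum in the definition of $\s_{\A}^L f$. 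Another appeal to Theorem \ref{weight} closes the proof.

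The only delicate point is organizing the long/short split and verifying that each telescoping step costs only an absolute constant uniform in the jump sequence; this is exactly the bookkeeping carried out in \cite[\S 1]{J1}. Since the resulting pointwise bound is entirely independent of the weight $w$, the transition to the weighted estimate is automatic, and the main content of the corollary lies in Theorem \ref{weight} itself.
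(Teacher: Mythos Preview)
Your proposal is correct and matches the paper's own approach: the paper simply writes ``again arguing as in \cite[\S 1]{J1}'' and defers the entire reduction to that reference, and you have correctly identified that the pointwise decomposition into long and short pieces is independent of the weight $w$, so that Theorem \ref{weight} is the only new analytic input. One small point worth making explicit: the ``middle'' telescoping term $|A^{k_j}f - A^{k_{j+1}}f|$ (or its $\E_k$ analogue) is not literally a summand of $\s_{\A}^L$ or $\s_{\A}^S$, so strictly speaking one also needs the weighted boundedness of the dyadic martingale square function (or of the square function for the cube averages $\{A^k\}$, which is itself a cubic family covered by Theorem \ref{weight}); this is standard and is part of the ``bookkeeping'' you cite from \cite[\S 1]{J1}, but it is good to be aware that the pointwise majorant is really $\s^L_{\A}f + \s^S_{\A}f + (\sum_k|\Delta_k f|^2)^{1/2}$ rather than just the first two pieces.
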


Of course, by specializing to the weight $v \equiv 1$, we recover the primary results of \cite{J1}.

%An immediate question concerns the behavior of our square functions in the high-$l^p$ range.
%We include an elementary approach to studying high-$l^p$ square functions in an appendix.
%Motivated by results in dyadic harmonic analysis, we use stopping time and ``good-$\lambda$'' techniques introduced by BURKHOLDER to prove stronger weighted inequalities.

Finally, we investigate the behavior of our square functions when the crucial smoothness assumption is relaxed. We focus our efforts in this regard on the following problem.

%A preliminary question concerns the behavior of square functions in the high-$l^p$ range; by arguing as in \cite{DR}, we are able to quickly prove the $l^p$-boundedness of almost all square functions considered in \cite{J1}.
%Connections between square function and function are much stronger.

%BURKHOLDER
%...survey of
%...useful presentation in \cite{W}

%In this regard, it seems quite natural that the above two conditions are actually enough to provide stronger connections between the square functions studied in \cite{J1} and the dyadic square-functions. An important tool introduced by BURKHOLDER in the study of (martingale) square functions are relative distributional, or ``good-$\lambda$, inequalities.

%In particular, we extend the results of \cite{J1} to the ``high-$l^p$'' setting $2 < p < \infty$,
%and prove weighted estimates as well. We achieve these extensions by combining the orthogonality technique developed in \cite{J1} with stopping-time arguments used in dyadic harmonic analysis.

%Secondly, in $\S 5$, we consider what happens when the ``smoothness'' assumption is relaxed:
\begin{problem}[\cite{J1}, Problem 7.5]
For each collection of nested rectangles $\{E_i\} \subset \Z^d$, is it true that for each $\phi \in L^1(X,\Sigma,\mu)$,
\[ \left( \sum_i |(M_i - M_{i+1})\phi|^2 \right)^{1/2} < \infty \]
$\mu$-almost everywhere?
\end{problem}
Certainly, this result would be implied by a weak-type bound
\[ \left\| \left(\sum_i |(M_i - M_{i+1})\phi|^2\right)^{1/2} \right\|_{L^{1,\infty}(X)} \lesssim \| \phi \|_{L^1(X)}. \]
In fact, as in shown in the Appendix \S 6 below, in many cases this weak-type bound is \emph{necessary} for convergence to occur. We therefore focus our attention on the following slightly more general

\begin{problem}[\cite{J1}, Problem 7.5 -- Working Version]\label{Rec}
For each collection of nested rectangles $\{E_i\} \subset \Z^d$, does there exist a bound
\[ \left\| \left(\sum_i |(A_i - A_{i+1})f|^2\right)^{1/2}  \right\|_{L^{1,\infty}(\Z^d)} \lesssim \|f\|_{L^1(\Z^d)}?\]
\end{problem}
Though this problem remains out of reach in its fullest generality, we are able to answer the problem affirmatively under a lacunarity assumption on collection $\{E_i\} \subset \Z^d$.

\begin{proposition}
For each collection of nested rectangles $\{E_i\} \subset \Z^d$ with \emph{dyadic} side lengths,
\[ \left\| \left(\sum_i |(A_i - A_{i+1})f|^2\right)^{1/2} \right\|_{L^{1,\infty}(\Z^d)} \lesssim \|f\|_{L^1(\Z^d)}.\]
\end{proposition}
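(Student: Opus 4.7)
The plan is to combine an $L^2$ estimate with a Calder\'{o}n--Zygmund decomposition. As a preliminary reduction I would refine the chain so that exactly one side doubles at each step (so $|E_{i+1}| = 2|E_i|$ for every $i$); since each original step doubles at most $d$ sides simultaneously, Cauchy--Schwarz shows the original square function is dominated, up to a constant factor of $\sqrt{d}$, by the square function of the refined chain. Write $j(i)$ for the direction doubled at step $i$.

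For the $L^2$ bound, Plancherel reduces the claim to $\sum_i |\widehat{K_{i+1}}(\xi) - \widehat{K_i}(\xi)|^2 \lesssim 1$ uniformly in $\xi$, where $K_i := \mathbf{1}_{E_i}/|E_i|$. After refinement, each $\widehat{K_{i+1}} - \widehat{K_i}$ factors as a one-dimensional Haar-type dyadic difference in the doubled coordinate $\xi_{j(i)}$ times a product of normalized Dirichlet-like kernels (each of modulus $\leq 1$) in the $d-1$ non-doubled directions. Using the pointwise estimate $|D_{l+1}(\theta) - D_l(\theta)| \lesssim \min(2^l|\theta|,\, 2^{-l}|\theta|^{-1})$ and grouping the sum by direction of doubling reduces the claim to $d$ independent one-dimensional dyadic Littlewood--Paley bounds, each of which gives an $O(1)$ contribution.

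For the weak-$(1,1)$ bound I would run a standard Calder\'{o}n--Zygmund decomposition $f = g + \sum_Q b_Q$ at height $\lambda$, with each $b_Q$ mean-zero and supported on a dyadic cube $Q$. The good part is handled by the $L^2$ bound and Chebyshev. For the bad part it suffices to show $\int_{\Z^d \setminus 2Q} Sb_Q \lesssim \|b_Q\|_1$ for each $Q$, where $Sf := (\sum_i |(A_i - A_{i+1})f|^2)^{1/2}$. Writing $G_i := E_{i+1}\setminus E_i$, the key identity
\[ (A_{i+1} - A_i)b_Q(x) = \frac{1}{2|E_i|}\Bigl[\sum_{y \in Q \cap (x - G_i)} b_Q(y) - \sum_{y \in Q \cap (x - E_i)} b_Q(y)\Bigr], \]
combined with the mean-zero property of $b_Q$, shows the summand vanishes unless $Q$ meets both $x - E_{i+1}$ and its complement---equivalently, unless the boundary of $x - E_{i+1}$ ``crosses'' $Q$. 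Exploiting that the refined chain visits each volume scale $2^m$ exactly once, together with the uniform bound $|(A_{i+1} - A_i)b_Q(x)| \leq \|b_Q\|_1/|E_i|$, a direction-by-direction geometric summation in scale yields the required estimate; summing over $Q$ completes the weak-$(1,1)$ bound.

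The main obstacle is the bad-part estimate: for fixed $x \notin 2Q$ one must count precisely the indices $i$ at which the boundary of $x - E_{i+1}$ crosses $Q$, and sum the corresponding contributions. The anisotropy of the $E_i$'s means the boundary-crossing region has a shape depending on the current cross-section of $E_i$, so the counting must be organized direction-by-direction to avoid picking up a power of the side length of $Q$ that would ruin the bound. The dyadic hypothesis is exactly what renders the resulting geometric series in scale summable; the analogous series in the general (non-lacunary) setting of Problem~\ref{Rec} fails to converge, which is why the full problem remains open.
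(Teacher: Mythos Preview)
Your bad-part estimate $\int_{(2Q)^c} Sb_Q \lesssim \|b_Q\|_1$ is false in general, and this is where the argument breaks. The problem is that your Calder\'{o}n--Zygmund atoms are \emph{cubes}, while the $E_i$ may be arbitrarily eccentric; removing $2Q$ is then far too little. Concretely, in $\Z^2$ take $E_i = [0,2^{i+1})\times[0,2)$ for $0\le i\le N$, let $Q=[0,2)^2$, and let $b_Q$ be $+1$ at $(0,0)$, $-1$ at $(0,1)$, and $0$ elsewhere (mean zero, $\|b_Q\|_1=2$). A direct computation gives $A_i b_Q(x_1,0)=2^{-i-2}\mathbf{1}_{[0,2^{i+1})}(x_1)$, hence for $x_1\in[2^j,2^{j+1})$ one finds $Sb_Q(x_1,0)\approx 2^{-j}$. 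Integrating along the line $x_2=0$ over $4\le x_1<2^{N+1}$ (all outside $2Q$) yields $\int_{(2Q)^c}Sb_Q \gtrsim N$, while $\|b_Q\|_1=2$. No fixed dilate $CQ$ repairs this. The ``direction-by-direction geometric summation'' you describe cannot succeed here because the mean-zero of $b_Q$ is in the \emph{wrong} direction relative to the doubling: the cancellation you need is fibrewise, not global.

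The paper's proof supplies exactly the missing ingredients. First, the Calder\'{o}n--Zygmund decomposition is taken with respect to the \emph{rectangular} filtration $\{\tau_j\}$ generated by the $H_k$, so atoms have the same shape as the averaging sets. Second---and this is the main new idea---each bad atom is further split as $b_Q = b_Q^0 + b_Q^1$, where $b_Q^0$ has mean zero along every fibre in the doubling direction and $b_Q^1$ along every fibre in the remaining $d-1$ directions. The piece $b_Q^0$ is then handled by writing $\chi_{H_{k+1}}-\chi_{H_k}$ as a one-dimensional difference tensored with a $(d-1)$-dimensional average, applying the Fefferman--Stein vector-valued maximal inequality to strip off the latter, and invoking a one-dimensional $L^1$ cancellation lemma (Lemma~\ref{1d1}). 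The piece $b_Q^1$ is handled by grouping the $H_k$ according to their $(d-1)$-dimensional cross-section, using Lemma~\ref{1d1} in the transverse variables, and then importing the one-dimensional weak-$(1,1)$ square-function bound of \cite{J} as a black box, combined via the Stein--Weiss-type Lemma~\ref{wk}. None of these steps is visible in your sketch; in particular, the reduction to one-dimensional theory via the fibred decomposition is essential and is what your cubic CZ approach cannot reproduce.
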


\bigskip

The structure of the paper is as follows:

In $\S 2$ we introduce relevant definitions, and present a few reductions which will be used throughout;

In $\S 3$, we study our square functions' behavior in the high-$L^p(\Z^d)$ regime;

In $\S 4$, we prove weighted estimates; and

In $\S 5$ we relax the smoothness assumptions of \cite{J1}, and discuss Problem \ref{Rec}.

Our appendix, $\S 6$, contains a weak-type principle for square functions in the spirit of \cite{S1}.

%in $\S 5$ we prove that a weak-type bound exists for a ``global'' variant of rectangular square functions.
%We also highlight difficulties present in the ``local'' variant, and propose a modified problem.

\subsection{Acknowledgements}
The author would like to thank Benjamin Hayes and Michael Lacey for helpful conversations, and his advisor, Terence Tao, for his great patience and support.

\subsection{Notation}
We shall, whenever possible, maintain the notation introduced in \cite{J1}.

For a set $E \subset \Z^d$, we use $|E|$ to denote the cardinality of the set $E$.

We let $1_E$ denote the indicator function of the set $E$, i.e.\
\[ 1_E(x) =
\begin{cases} 1 &\mbox{if } x \in E \\
0 & \mbox{if } x \notin E. \end{cases}\]
We let $\chi_E := \frac{1}{|E|} \cdot 1_E$ denote the $L^1(\Z^d)$-normalized indicator function.

For a function $f$ defined on $\Z^d$, our convention will be to let
\[ \int f \]
denote the summation $\sum_{n \in \Z^d} f(n)$. Accordingly, we will use
\[ \| f \|_{L^p(\Z^d)} \]
to denote the $L^p(\Z^d)$-norm, $(\sum_{n \in \Z^d} |f(n)|^p)^{1/p}$, with the obvious modification for $p=\infty$.

When integrating over other spaces, we will include the domain and measures.

We will make use of the modified Vinogradov notation. We use $X \lesssim Y$, or $Y \gtrsim X$ to denote the estimate $X \leq CY$ for an absolute constant $C$. If we need $C$ to depend on a parameter,
we shall indicate this by subscripts, thus for instance $X \lesssim_p Y$ denotes
the estimate $X \leq C_p Y$ for some $C_p$ depending on $p$. We use $X \approx Y$ as
shorthand for $X \lesssim Y \lesssim X$.

\section{Preliminaries}
Let $\R = 0 \subset R_1 \subset R_2 \subset \dots $ denote a nested sequence of dyadic rectangles inside $\Z^d$, i.e.\
\[ R_k = \prod_{a=1}^d [0, 2^{a(k)});\]
here $\{ a(k) \}$ are non-decreasing, and $a'(k) < a'(k+1)$ for at least one value $1 \leq a' \leq d$.

We also define the ``symmetric'' rectangles
\[ H_k = \prod_{a=1}^d [-2^{a(k)}, 2^{a(k)}).\]
%and their $8$-fold dilates:
%\[ H'_k = \prod_{i=1}^d [-2^{i(k)+3}, 2^{i(k)+3}).\]

For $k \geq 0$, we let $\sigma_k = \sigma_k(\R)$ denote the $\sigma$-algebra generated by $R_k$, i.e.\ the $\sigma$-algebra with atoms
\[ \prod_{a=1}^d 2^{a(k)}[m, m+1).\]
Henceforth, we let $\E_k$ denote the expectation with respect to $\sigma_k$, $\E_0$ the identity operator,
\[ \Delta_0 = \E_0 \ \text{ and } \
 \Delta_k:= \E_k - \E_{k-1}, \ k \geq 1 \]
denote the martingale differences.

Closely connected to our family of rectangles, $\R$, are the collections of sets $\A = \{\A_k\}$, whose elements have controlled
\begin{enumerate}
\item Spatial location: for every $E \in \A_k$, $E \subset H_k$;
\item Size: for each $E \in \A_k$ there exists some $c > 0$ so that $c|H_k| \leq |E|$;
\item (Internal) Smoothness: for each $E \in \A_{k+l}$, for $l \geq 0$
\[ \frac{ | \{ x: \partial E \cap (H_{k} - x) \neq \emptyset \} | }{ |H_{k+l}| } =: \frac{B(E,H_k)}{|H_{k+l}|} \leq \iota(l)^2, \]
with $\sum_l \iota(l) < \infty$; and finally
\item Eccentricity:
\[ \frac{ B(H_{k+l}, H_{k}) }{ |H_{k+l}| } \leq \varepsilon(l)^2,\]
where $\sum_l \varepsilon(l) < \infty$ as well.
\end{enumerate}
We shall collectively refer to the above four criteria as the \emph{JRW criteria}, and use the notation $\A = \{ \A_k \}$ to denote the various collections.

The third of the above points forces regularity on the boundaries of the $E \in \A_k$, i.e.\ some smoothness on $\chi_E$.

The fourth point is implicitly used in the proofs of the main theorems of \cite{J1}, though not explicitly stated in the summary of \cite[\S 5]{J1}.
We shall replace it with the following equivalent formulation, which we isolate in the form of the following simple

\begin{lemma}
Eccentricity control as above is equivalent to the existence of an $L$ so that
\[ \min_{1 \leq a \leq d} \{a(k+L) - a(k)\} \geq 1 \]
for each $k$, with $H_k = \prod_{a=1}^d [-2^{a(k)}), 2^{a(k)})$.

Moreover, the existence of such an $L$ allows us to take
\[ \varepsilon(l)^2 \lesssim 2^{- \frac{l}{L} },\]
and therefore
\[ \sum_l \varepsilon(l)^\theta < \infty, \text{ for any } \theta > 0.\]
\end{lemma}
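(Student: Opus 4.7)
The plan is to compute the quantity $B(H_{k+l},H_k)/|H_{k+l}|$ directly in terms of the scale exponents $\{a(k)\}$ and then analyze what summability of $\varepsilon(l)$ forces.

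First I would observe the geometric content of $B(H_{k+l}, H_k)$. Decomposing $\partial H_{k+l}$ into its $2d$ faces, the $H_k$-neighborhood of each face perpendicular to coordinate $a$ has volume comparable to
\[ 2^{a(k)} \prod_{b \neq a} 2^{b(k+l)}, \]
since the thickening in the $a$-th direction has width $\approx 2^{a(k)}$ while the other directions retain width $\approx 2^{b(k+l)}$. Summing over the $2d$ faces and dividing by $|H_{k+l}| \approx \prod_a 2^{a(k+l)}$, I obtain the basic estimate
\[ \frac{B(H_{k+l},H_k)}{|H_{k+l}|} \approx \sum_{a=1}^d 2^{-(a(k+l)-a(k))}. \]

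Next, for the \emph{forward} direction, I argue by contrapositive: suppose no such $L$ exists. Then for every $L \geq 1$ there is some $k = k(L)$ and some coordinate $a_0$ with $a_0(k+L) = a_0(k)$, whence the sum above is at least $1$, so $\varepsilon(L)^2 \gtrsim 1$ uniformly in $L$. This contradicts $\sum_l \varepsilon(l) < \infty$ (indeed it contradicts even $\varepsilon(l) \to 0$). Hence the required $L$ exists.

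For the \emph{reverse} direction, assume $\min_a\{a(k+L) - a(k)\} \geq 1$ for all $k$. Iterating this bound $\lfloor l/L \rfloor$ times yields $\min_a\{a(k+l)-a(k)\} \geq \lfloor l/L \rfloor$ for every $k$ and every $l \geq 0$. Plugging into the geometric estimate,
\[ \frac{B(H_{k+l},H_k)}{|H_{k+l}|} \lesssim d \cdot 2^{-\lfloor l/L \rfloor} \lesssim 2^{-l/L}, \]
so I may take $\varepsilon(l)^2 \lesssim 2^{-l/L}$. Then $\varepsilon(l)^\theta \lesssim 2^{-\theta l/(2L)}$ for any $\theta > 0$, giving $\sum_l \varepsilon(l)^\theta < \infty$ as claimed.

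No step here is genuinely difficult; the only real point of care is the geometric computation of $B(H_{k+l},H_k)$, which is a standard face-by-face boundary thickening argument, and the observation that $\varepsilon(l)$ must be a function of $l$ alone (uniform in $k$), which is what makes the failure of the $L$-condition genuinely obstruct summability.
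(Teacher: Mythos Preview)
Your proof is correct and follows essentially the same route as the paper. Both arguments hinge on the geometric computation $\frac{B(H_{k+l},H_k)}{|H_{k+l}|} \approx 2^{-s}$ with $s = \min_a\{a(k+l)-a(k)\}$ (your sum $\sum_a 2^{-(a(k+l)-a(k))}$ is comparable to its largest term), and then the contrapositive for the forward direction; you are simply more explicit than the paper about the face-by-face boundary estimate, the iteration giving $s \geq \lfloor l/L \rfloor$, and the $\theta$-summability.
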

We say that two cubes $Q = \prod_{i=1}^d I_i, \ R = \prod_{i=1}^d J_i, |I_i| \geq |J_i|$
with dyadic side-lengths are \emph{$s$-separated} if
\[ \min_{ 1 \leq i \leq d} \left\{ \frac{|J_i|}{|I_i|} \right\} = 2^{-s};\]
the content of the above theorem is that there exists an absolute $L$ so that
$H_{k+L}, H_k$ are 1-separated for each $k$.
\begin{proof}
If $s = \min_{1 \leq i \leq d} \{a(k+l) - a(k)\}$, then
\[ \frac{ B(H_{k+l}, H_{k}) }{ |H_{k+l}| } \approx 2^{-s};\]
if no such $L$ were to exist, then we could find arbitrarily many $l, k=k(l)$ so that
\[ \frac{ B(H_{k+l}, H_{k}) }{ |H_{k+l}| } \gtrsim 1,\]
which would force the sum $\sum_l \varepsilon(l)$ to diverge.
\end{proof}

For our purposes, the (alternate) eccentricity condition guarantees that for any dyadic $2^c , c \geq 1$,
\[ \frac{B(H_{k+l}, 2^c H_k)}{|H_{k+l}|} \lesssim 2^{- \frac{l}{cL} }.
\]

We shall be studying families of sets, $\{E_t\}$, which are \emph{regular} with respect to our collections $\A = \{\A_k\}$:
for each $2^k \leq t \leq t' < 2^{k+1}$,
\[ E_t \subset E_{t'}, \text{ and } E_t, E_{t'} \in \mathcal{A}_k.\]
If in addition $\A = \{\A_k\}$ satisfy the above JRW criteria, we will say that $\A$ itself is \emph{regular}.

%In $\S 5$, we shall relax the Smoothness and Eccentricity assumptions on our families.

With a regular collection $\A = \{\A_k\}$ and $\{E_t\}$ specified,
we will let
\[ \chi_t = \chi^{\A}_t := \frac{1}{|E_t|} 1_{E_t} \]
denote the $L^1(\Z^d)$-normalized indicator function.

We let
\[ A_t g (x) = A^{\A}_t g(x) := \chi_t *g(x) = \frac{1}{|E_t|} \sum_{y \in E_t} g(x-y) \]
denote the convolution operator with kernel $\chi_t$.
We introduce the maximal function associated to $\A$
\[ Mf = M_{\A} f:= \sup_k \chi_{5H_k} *|f|(x), \]
which dominates $\sup_t A_t |f|$, and satisfies a weak-type $(1,1)$ inequality by the nesting properties of the $\{H_k\}$. (cf. e.g.\ \cite[Lemma 5.3]{SW1}).

With $\{E_t\}$ regular with respect to $\A$, we define the long square function
\[ \s_{\A}^L f:= \left(\sum_k |\s_{\A,k}^{L} f|^2\right)^{1/2} := \left(\sum_k \sup_{2^k \leq t < 2^{k+1}} | A_t f - \E_kf|^2\right)^{1/2},\]
and short square function
\[ \s_{\A}^S f:= \left(\sum_k |\s_{\A,k}^{S} f|^2\right)^{1/2} := \left(\sum_k \sup_{t_i \text{ increasing}} \sum_{2^k \leq t_i < 2^{k+1}} | A_{t_i} f - A_{t_{i+1}} f|^2\right)^{1/2}.\]
Often, we will suppress the subscript $\A$.

We briefly remark that that for $2^k \leq t < 2^{k+1}$, since
\[
|A_tf|(x) \lesssim \chi_{H_k} *|f|(x) \ \text{ and } |\E_k f| (x) \lesssim \chi_{H_k} *|f|(x), \]
we may control $\s_{k}^{L} f \lesssim \chi_{H_k} *|f|$.
We have similar control over $\s_{k}^{S} f$. Using the domination of the $l^2$ norm by the $l^1$ norm, and then the triangle inequality, we may bound
\[ \aligned
\s^S_k f &:= \sup_{t_i \ \text{increasing}} \left(\sum_{2^k \leq  t_i  < 2^{k+1} } |(A_{t_i} - A_{t_{i+1}}) f|^2 \right)^{1/2}(x) \\
&\leq \sum_{2^k \leq t < 2^{k+1} } |(A_t - A_{t+1}) f|(x) \\
&\leq  \sum_{2^k \leq  t < 2^{k+1} } \left( \left(\frac{1}{|E_t|} - \frac{1}{|E_{t+1}|} \right) 1_{E_t} * |f| \right) +
\sum_{2^k \leq  t < 2^{k+1} } \left(\frac{1}{|E_{t+1}|} 1_{E_{t+1} \smallsetminus E_t} * |f| \right) \\
&\lesssim \chi_{H_k}*|f|, \endaligned \]
where we used the size control $c|H_k| \leq |E_t|$ in the final inequality. In particular, we may control each
\[ \s^*_k f \lesssim Mf.\]

We also introduce the larger, shifted square functions:
\[ \tilde{\s}_{\A}^{*} f(x):= \left(\sum_k |\tilde{\s}_{\A,k}^{*} f(x)|^2 \right)^{1/2} := \left(\sum_k |\sup_{v \in H_k} \s_{\A,k}^{*}f(x+v) |^2 \right)^{1/2},\]
$*= L, S$.
The additional suprema affords the shifted square functions a useful degree of smoothness:

\begin{lemma}\label{smooth}
With $x_Q$ denoting the center of each $Q \in \sigma_k$,
$\tilde{\s}_k^*f$ is pointwise dominated, and $L^p(\Z^d)$-comparable $1\leq p \leq \infty$, to a function which is constant on $Q \in \sigma_k$:
\[ \s_{D,k}^{*} f:= \sum_{Q \in \sigma_k} \tilde{\s}_{k}^{*} f(x_Q) 1_{3Q}.\]
In particular,
\[ \left\| \tilde{\s}_{k}^{*} f(x) \right\|_{L^p(\Z^d)} \approx_p \left\| \s_{D,k}^{*} f \right\|_{L^p(\Z^d)}.
 \]
\end{lemma}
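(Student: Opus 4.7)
The plan is to split the comparison into two pieces: a pointwise upper bound $\tilde{\s}_k^* f \leq \s_{D,k}^* f$, and a matching $L^p$ lower bound for $\tilde{\s}_k^* f$ obtained by passing through a slightly enlarged shifted variant of $\s_k^*$. Both pieces exploit the fact that $H_k$ has ``radius'' $2^{a(k)}$ in direction $a$, exactly twice the radius of a $\sigma_k$-atom, so half-atom shifts can always be absorbed by doubling $H_k$.

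For the pointwise bound, I would fix $x \in Q_0 \in \sigma_k$ and first observe the containment $x + H_k \subset 3Q_0$, which is immediate from comparing coordinate-wise radii. Any $v \in H_k$ then lands $x + v$ in a unique atom $Q' \subset 3Q_0$, and since $(x+v) - x_{Q'} \in \tfrac{1}{2}H_k \subset H_k$, one gets $\s_k^* f(x+v) \leq \tilde{\s}_k^* f(x_{Q'})$. Supremizing in $v$ and bounding max by sum gives $\tilde{\s}_k^* f(x) \leq \sum_{Q' \subset 3Q_0} \tilde{\s}_k^* f(x_{Q'})$; a brief check using that $x_{Q'} - x_{Q_0}$ has coordinates in $\{-2^{a(k)},0,2^{a(k)}\}$ identifies $\{Q' : Q' \subset 3Q_0\}$ with $\{Q' : x \in 3Q'\}$ for any $x \in Q_0$, so this sum is exactly $\s_{D,k}^* f(x)$.

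For the reverse $L^p$ estimate, the fact that $\s_{D,k}^* f(x)$ is a sum of at most $3^d$ nonzero terms at any point reduces matters to showing $\sum_{Q \in \sigma_k} \tilde{\s}_k^* f(x_Q)^p |Q| \lesssim_d \|\tilde{\s}_k^* f\|_{L^p}^p$. To that end I would introduce the auxiliary ``doubly shifted'' function $\tilde{\tilde{\s}}_k^* f(x) := \sup_{v \in 2H_k} \s_k^* f(x+v)$. For $x \in Q$ one has $x_Q + H_k \subset x + 2H_k$, so $\tilde{\s}_k^* f(x_Q) \leq \tilde{\tilde{\s}}_k^* f(x)$; integrating over $Q$ and summing yields $\sum_Q \tilde{\s}_k^* f(x_Q)^p |Q| \leq \|\tilde{\tilde{\s}}_k^* f\|_{L^p}^p$. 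Covering $2H_k$ by $O_d(1)$ translates $H_k + w_j$ produces the pointwise inequality $\tilde{\tilde{\s}}_k^* f(x) \leq \sum_j \tilde{\s}_k^* f(x+w_j)$, and translation invariance of $\|\cdot\|_{L^p(\Z^d)}$ then gives $\|\tilde{\tilde{\s}}_k^* f\|_{L^p} \lesssim_d \|\tilde{\s}_k^* f\|_{L^p}$. Chaining these estimates yields the desired comparability for $1 \leq p < \infty$; the case $p = \infty$ follows immediately from the pointwise bound together with $\sup_Q \tilde{\s}_k^* f(x_Q) \leq \|\tilde{\s}_k^* f\|_{L^\infty}$.

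The main obstacle is essentially bookkeeping around the precise dyadic containments: $H_k$ is \emph{not} closed under translation by half an atom, so one cannot directly compare the supremum defining $\tilde{\s}_k^* f$ at $x$ with the one at $x_Q$. Introducing the enlarged $\tilde{\tilde{\s}}_k^*$ is the critical technical device for absorbing this half-atom shift, after which the doubling of $H_k$ costs only a dimensional constant via translation invariance.
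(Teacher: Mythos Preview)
Your proof is correct. The pointwise upper bound is carried out exactly as in the paper: both arguments locate $x+v$ in a neighboring atom $Q' \subset 3Q_0$ and use $(x+v)-x_{Q'} \in H_k$ to pass the supremum to the center.

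For the reverse $L^p$ inequality, however, the paper proceeds differently. Rather than introducing the enlarged operator $\tilde{\tilde{\s}}_k^*$, it fixes for each $Q$ a maximizing $v_Q \in H_k$ with $\tilde{\s}_k^* f(x_Q) = \s_k^* f(x_Q + v_Q)$, and then observes that on the set $X(Q) := Q \cap (x_Q + v_Q - H_k)$ one has $\tilde{\s}_k^* f(y) \geq \s_k^* f(x_Q + v_Q) = \tilde{\s}_k^* f(x_Q)$ directly, with $|X(Q)| \gtrsim_d |Q|$. Integrating over $X(Q)$ and summing gives the bound. So your remark that ``one cannot directly compare the supremum defining $\tilde{\s}_k^* f$ at $x$ with the one at $x_Q$'' is not quite right: the paper does compare directly, but only on a large subset of $Q$ rather than all of it. Your device of doubling $H_k$ and invoking translation invariance is an equally valid workaround, trading the restriction-to-a-subset idea for an enlargement-then-covering idea; both cost only a dimensional constant.
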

\begin{proof}
If $Q_i \in \sigma_k$ lie in $3Q$, then for any $x\in Q$, we may bound
\[ \tilde{\s}_{k}^{*} f(x) \leq \sum_i \tilde{\s}_{k}^{*} f(x_{Q_i}).\]
Indeed, if $v \in H_k$ is such that
\[ \tilde{\s}_{k}^{*} f(x) = {\s}_{k}^{*} f(x + v),\]
then we may write $x+v = x_{Q_i} + v_{Q_i}$ for some center $x_{Q_i}$ of a neighboring cube $Q_i \in 3Q$, and some (possibly different) $v_{Q_i} \in H_k$.
Summing over all $Q \in \sigma_k$ therefore yields a pointwise majorization
\[ \tilde{\s}_{k}^{*} f(x) \lesssim \sum_Q \tilde{\s}_{k}^{*} f(x_Q)1_{3Q}.\]

On the other hand, if $v_Q \in H_k$ is such that
\[ \tilde{\s}_{k}^{*} f(x_Q) = \s_{k}^{*} f(x_Q+v_Q),\]
then on the set
\[ X(Q) = \{y \in Q : y+v = x_Q +v_Q \text{ for some } v\in H_k \} = Q \cap \{x_Q + v_Q - H_k \}, \]
which has measure $|X(Q)| \gtrsim_d |Q|$,
we may bound
\[ \tilde{\s}_{k}^{*} f(x_Q) \leq \inf_{y \in X(Q)} \tilde{\s}_{k}^{*} f(y),\]
by definition of the set $X(Q)$.
We therefore have a similar pointwise inequality for $p=\infty$, while for $1\leq p < \infty$, using the finite overlap of $\{3Q\}$ to estimate pointwise
\[
\left| \sum_Q \tilde{\s}_{k}^{*} f(x_Q)1_{3Q} \right|^p \lesssim_p \sum_Q \tilde{\s}_{k}^{*} f(x_Q)^p 1_{3Q} ,
\]
we estimate
\[ \aligned
\left\| \sum_Q \tilde{\s}_{k}^{*} f(x_Q)1_{3Q} \right\|_{L^p(\Z^d)}^p &\lesssim \sum_Q |\tilde{\s}_{k}^{*} f(x_Q)|^p |3Q| \\
&\lesssim \sum_Q |\tilde{\s}_{k}^{*} f(x_Q)|^p |X(Q)| \\
&\leq \sum_Q \int_{X(Q)} |\tilde{\s}_{k}^{*}f(y)|^p \\
&\leq \sum_Q \int_{Q} |\tilde{\s}_{k}^{*}f(y)|^p \\
&= \left\| \tilde{\s}_{k}^{*} f \right\|_{L^p(\Z^d)}^p. \endaligned \]
\end{proof}

For future use, we record the following additional
\begin{lemma}
For any $v \in A_\infty$,
\[ \s_{D,k}^{*} f:= \sum_{Q \in \sigma_k} \tilde{\s}_{k}^{*} f(x_Q) 1_{3Q} \]
is $L^p(v)$-comparable, $1 \leq p < \infty$ to
\[ \s_{d,k}^{*} f:= \sum_{Q \in \sigma_k} \tilde{\s}_{k}^{*} f(x_Q) 1_{Q}.\]
\end{lemma}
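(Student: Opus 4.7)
The plan is to combine the trivial pointwise inequality with the doubling property of $A_\infty$ weights recorded in the preliminaries. Since $Q \subset 3Q$ for each $Q \in \sigma_k$, we have the pointwise bound $\s_{d,k}^{*}f(x) \leq \s_{D,k}^{*}f(x)$, which immediately gives the inequality $\|\s_{d,k}^{*}f\|_{L^p(v)} \leq \|\s_{D,k}^{*}f\|_{L^p(v)}$ in one direction, for every positive measure $v$.

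For the reverse direction, I would first use the fact that the dilated cubes $\{3Q : Q \in \sigma_k\}$ have bounded overlap (at most $3^d$) to pointwise estimate
\[ \bigl|\s_{D,k}^{*}f(x)\bigr|^p \lesssim_{p,d} \sum_{Q \in \sigma_k} \bigl|\tilde{\s}_{k}^{*}f(x_Q)\bigr|^p 1_{3Q}(x). \]
Integrating against $v$ and interchanging the sum and integral yields
\[ \int_{\Z^d} |\s_{D,k}^{*}f|^p\, v \;\lesssim\; \sum_{Q \in \sigma_k} \bigl|\tilde{\s}_{k}^{*}f(x_Q)\bigr|^p\, v(3Q). \]

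The key step is then to replace $v(3Q)$ by $v(Q)$. This is exactly the content of the $A_\infty$ doubling property noted earlier: iterating $v(2Q) \leq Cv(Q)$ finitely many times gives $v(3Q) \lesssim_{v,d} v(Q)$. Inserting this bound and using the disjointness of the $Q \in \sigma_k$ together with the fact that $\s_{d,k}^{*}f \equiv \tilde{\s}_{k}^{*}f(x_Q)$ on $Q$, we obtain
\[ \sum_{Q \in \sigma_k} \bigl|\tilde{\s}_{k}^{*}f(x_Q)\bigr|^p\, v(Q) \;=\; \int_{\Z^d} |\s_{d,k}^{*}f|^p\, v, \]
which completes the comparability. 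There is no real obstacle here: the whole argument rests on the bounded overlap of $\{3Q\}$ and the doubling of $A_\infty$ weights, both of which are already in hand, and the main point is simply to verify that the $A_\infty$ hypothesis is precisely what is needed to trade the overlapping sum for the disjoint one.
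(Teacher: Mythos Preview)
Your proof is correct and follows essentially the same approach as the paper: bounded overlap of $\{3Q\}$ to pass to a sum, then the doubling property of $A_\infty$ weights to replace $v(3Q)$ by $v(Q)$. You additionally spell out the trivial pointwise direction $\s_{d,k}^{*}f \leq \s_{D,k}^{*}f$, which the paper leaves implicit.
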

\begin{proof}
Using again the bounded overlap of $\{ 3Q \}$, one estimates
\[ \aligned
\| \s_{D,k}^* f \|_{L^p(v)}^p &\lesssim \int \sum_Q |\tilde{\s}_{k}^{*} f(x_Q) |^p 1_{3Q} v \\
&\lesssim \sum_Q |\tilde{\s}_{k}^{*} f(x_Q)|^p v(3Q) \\
&\lesssim \sum_Q |\tilde{\s}_{k}^{*} f(x_Q)|^p v(Q) \\
&= \| \s_{d,k}^* f \|_{L^p(v)}^p, \endaligned \]
where we used the doubling nature of $v \in A_\infty$ in passing to the third line.
\end{proof}

Though -- as we shall see -- more is true, for now we shall only need that our shifted square functions inherit $L^2$-boundedness from their centered associates:
\begin{proposition}\label{L2}
$\left\| \tilde{\s}_{\A}^*f \right\|_{L^2(\Z^d)} \lesssim \|f\|_{L^2(\Z^d)}$.
\end{proposition}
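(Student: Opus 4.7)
The plan is to establish the $L^2$-bound by almost-orthogonality with respect to the martingale decomposition $f = \sum_{j \geq 0} \Delta_j f$. Since the $\Delta_j f$ are orthogonal in $L^2$, it suffices to prove an off-diagonal bound of the form
\[ \|\tilde{\s}_k^{*}(\Delta_j f)\|_{L^2(\Z^d)} \lesssim \beta(|j-k|) \|\Delta_j f\|_{L^2(\Z^d)} \]
for some absolutely summable sequence $\beta(l)$. Granted this, the sublinearity $\tilde{\s}_k^{*}f \leq \sum_j \tilde{\s}_k^{*}\Delta_j f$, Cauchy--Schwarz in $j$ with weights $\beta$, and Schur's test interchanging the $j, k$ sums yield
\[ \sum_k \|\tilde{\s}_k^{*}f\|_{L^2}^2 \lesssim \Bigl(\sum_l \beta(l)\Bigr)^2 \sum_j \|\Delta_j f\|_{L^2}^2 = C\,\|f\|_{L^2}^2, \]
which is precisely the desired estimate. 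Lemma \ref{smooth} additionally permits one to replace each $\tilde{\s}_k^{*}$ by the $\sigma_k$-piecewise-constant proxy $\s_{D,k}^{*}$, which is convenient for reducing the shift supremum $\sup_{v \in H_k}$ to a single choice $v_Q \in H_k$ at each atom-center $x_Q$.

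The off-diagonal estimate splits into two complementary regimes, exploiting the two smoothness conditions of the JRW criteria. In the \emph{fine regime} $j \leq k$, one has $\E_k \Delta_j f = 0$, so $\tilde{\s}_k^{L}(\Delta_j f)$ collapses to a shifted maximal of $|A_t \Delta_j f|$. Decomposing each $E_t \in \A_k$ into its $\sigma_j$-atomic interior (on which $\Delta_j f$ has mean zero, producing exact cancellation once the shift is aligned with the $\sigma_j$-lattice) plus its $\sigma_j$-boundary $\{y : \partial E_t \cap (H_j - y) \neq \emptyset\}$, the internal smoothness condition bounds the latter by $\iota(k-j)^2|H_k|$, giving the decay $\beta(k-j) \lesssim \iota(k-j)$; the short square function $\tilde{\s}_k^{S}$ is handled analogously via the boundaries of the rings $E_{t_{i+1}}\setminus E_{t_i}$. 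In the \emph{coarse regime} $j > k$, the identity $\E_k \Delta_j f = \Delta_j f$ reduces $A_t \Delta_j f - \E_k \Delta_j f$ to $(A_t - I)\Delta_j f$; since $\Delta_j f$ is $\sigma_{j-1}$-measurable and hence constant on cubes of side much larger than $2^k$, this convolution vanishes outside an $H_k$-thickening of the $\sigma_{j-1}$-lattice, whose relative measure is controlled by the eccentricity bound $\varepsilon(j-k)^2 \lesssim 2^{-(j-k)/L}$, yielding $\beta(j-k) \lesssim \varepsilon(j-k)$.

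The principal obstacle is the boundary accounting in the fine regime: the $\sigma_j$-interior cancellation of $E_t$ is exact only after aligning the shift $v$ with the $\sigma_j$-lattice, and obtaining uniformity over all $v \in H_k$ requires absorbing the misalignment into the boundary term. This is where passage to the proxy $\s_{D,k}^{*}$ via Lemma \ref{smooth} is essential: it localizes $v$ to a single representative $v_Q$ per $\sigma_k$-atom, after which the inclusion $E_t + v_Q \subset 2H_k$ confines the entire boundary interaction to a fixed dyadic scale, and the JRW smoothness and eccentricity hypotheses---designed precisely to measure $H_k$-thickenings of set-boundaries---apply to furnish the summable decay $\beta(l) \lesssim \iota(l) + \varepsilon(l)$.
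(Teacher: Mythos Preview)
Your approach is essentially the paper's: decompose $f=\sum_j \Delta_j f$, prove almost-orthogonality bounds $\|\tilde{\s}_k^{*}(\Delta_j f)\|_{L^2}\lesssim \beta(|j-k|)\|\Delta_j f\|_{L^2}$ with $\beta$ summable, using internal smoothness $\iota$ in the regime where the martingale scale is finer and eccentricity $\varepsilon$ where it is coarser. One minor point: the paper does \emph{not} pass to the proxy $\s_{D,k}^{*}$ via Lemma~\ref{smooth} in this proof---it works directly with $\tilde{\s}_k^{*}$, absorbing the shift $v\in H_k$ into a slightly enlarged kernel (replacing $\chi_{H_k}$ by $\chi_{2H_k}$ or $\chi_{4H_k}$), so your claim that the proxy reduction is ``essential'' is overstated, though it is a valid alternative route.
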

The argument here is very similar to the arguments of \cite[\S 3]{J1}.
%Informally, $\tilde{\s}_k^{*}$ measures the locations where $f$ differs from being constant at $\approx \sigma_k$-scale
%\begin{itemize}
%\item $\tilde{\s}_k^{*}f(x) \equiv \tilde{\s}_k^{*}(f1_{x + 2H_k})(x);$
%\item $\tilde{\s}_k^*(1_{x + 2H_k})(x) = 0$; and
%\item $\tilde{\s}_{k}^{*} f(x) \approx \sum_{Q \in \sigma_n} \tilde{\s}_{k}^{*} f(x_Q) 1_Q$ (Lemma \ref{smooth}).
%\end{itemize}
The qualitative similarities between $\tilde{\s}_k^{*}$ and the projection operators $\Delta_k = \E_k - \E_{k-1}$ -- informally,
both measure the locations where $f$ differs from being constant at $\approx \sigma_k$-scale -- motivate the following orthogonality approach:
\begin{proof}
With
\[ f = \sum_{k \geq 0} \Delta_k(f) =: \sum_{k \geq 0} d_k \]
we majorize
\[ \tilde{\s}^*f = (\sum_n |\tilde{\s}_n^* ( \sum_k d_k)|^2)^{1/2} \leq \sum_{j \in \Z} ( \sum_n |\tilde{\s}_n^* ( d_{n+j})|^2)^{1/2}, \]
defining $d_t = 0$ for $t < 0$.

Since $\tilde{\s}_n^*(g) \lesssim M g$ by previous discussion,
we may ignore the $L^2(\Z^d)$-contribution of each $|j| \leq C = C(L)$:
\[ \aligned
\sum_{|j|\leq C} \left\| \sum_n |\tilde{\s}_n^* ( d_{n+j})|^2)^{1/2} \right\|_{L^2(\Z^d)} &\lesssim
\sum_{|j| \leq C} \left\| (\sum_n |\tilde{\s}_n^* ( d_{n+j})|^2)^{1/2} \right\|_{L^2(\Z^d)} \\
&\lesssim \sum_{|j| \leq C} \left\| (\sum_n |M d_{n+j}|^2)^{1/2} \right\|_{L^2(\Z^d)} \\
&= \sum_{|j| \leq C} \left( \sum_n \left\| M d_{n+j} \right\|_{L^2(\Z^d)}^2 \right)^{1/2} \\
&\lesssim \sum_{|j| \leq C} \left( \sum_n \left\| d_{n+j} \right\|_{L^2(\Z^d)}^2 \right)^{1/2} \\
&= \sum_{|j| \leq C} \left\| (\sum_n |d_{n+j}|^2)^{1/2} \right\|_{L^2(\Z^d)} \\
&= \sum_{|j| \leq C} \| f\|_{L^2(\Z^d)} \\
&\lesssim \|f\|_{L^2(\Z^d)}, \endaligned\]
where we used the $L^2(\Z^d)$-boundedness of $M$ in the fourth line.

For $j > C$, let $Q \in \sigma_{n+j-1}$ be arbitrary, and consider $\tilde{\s}_n^* d_{n+j}(x)$ on $Q$.
Since $j > C > 0$,
\[ \tilde{\s}_n^* d_{n+j}(x) = \tilde{\s}_n^* (d_{n+j} 1_{5Q})(x),\]
we may bound
\[ |\tilde{\s}_n^* d_{n+j}(x)| \lesssim \max_{x\in 5Q} | d_{n+j} |(x) \leq (\sum_{Q_i} | d_{n+j}|^2(x_{Q_i}))^{1/2},\]
where $Q_i \in \sigma_{n+j-1}$ lie in $5Q$, and $x_{Q_i}$ is the center of each such $Q_i$; we note that $d_{n+j}$ is constant-valued on $Q \in \sigma_{n+j-1}$. Let us call this constant value $c = c(Q)$. Then, for every $x \in Q \in \sigma_{n+j-1}$ such that $x + 2H_k \subset Q$,
\[ A_t d_{n+j}(x) = c,\]
hence $\tilde{\s}_n^* d_{n+j}(x) = 0$ whenever $x + 2H_n \subset Q$.
In particular, $\tilde{\s}_n^* d_{n+j} $ is supported inside
\[ \{ x : x+2H_n \cap \partial Q \neq \emptyset\}.\]

We may consequently estimate
\[ \aligned
\sum_{Q\in \sigma_{n+j-1}} \int_Q |\tilde{\s}_n^* d_{n+j}|^2(x) &\leq \sum_{Q\in \sigma_{n+j-1}} \int \sum_{Q_i} | d_{n+j}|^2(x_{Q_i}) \cdot 1_{B(Q,2H_n)} \\
&\lesssim \sum_{Q\in \sigma_{n+j-1}} \int |d_{n+j}|^2(x) 1_{B(Q,2H_n)} \\
&= \sum_{Q\in \sigma_{n+j-1}} |d_{n+j}|^2(x_Q) \ |B(Q,2H_n)| \\
&\leq \varepsilon(j-1-2L)^2 \sum_{Q\in \sigma_{n+j-1}}|d_{n+j}|^2(x_Q) \ |Q| \\
&= \varepsilon(j-1-2L)^2 \| d_{n+j}\|_{L^2(\Z^d)}^2, \endaligned\]
where $L$ is the cost of separating scales, as in the alternative characterization of the eccentricity JRW-criterion.

Using the orthogonality of $\{d_{n+j}\}$ and summing over $n$ exhibits
\[ \left\| (\sum_n |\tilde{\s}_n^* ( d_{n+j})|^2)^{1/2} \right\|_{L^2(\Z^d)}^2 \leq \varepsilon(j-1-2L)^2 \|f\|_2^2,\]
and summing over $j > C$ shows
\[ \sum_j \left\| (\sum_n |\tilde{\s}_n^* ( d_{n+j})|^2)^{1/2} \right\|_{L^2(\Z^d)} \leq \sum_{j \geq C-2L} \varepsilon(j)^2 \|f\|_{L^2(\Z^d)} \lesssim \|f\|_{L^2(\Z^d)}.\]

Next, for $j<-C$ we establish the pointwise inequality
\[ |\tilde{\s}_n^* ( d_{n+j})|^2(x) \lesssim \iota^2(j) \cdot Md_{n+j}(x)^2,\]
where $\iota(j)$ appears as the quantitative measure of smoothness in the JRW-criterion.

Summing over $n,j$ as above will then yield the desired bound.

To do so, for $Q \in \sigma_{n+j}$, we first observe that for any $E_i \in \A_n$, any $v \in H_n$, we may bound
\[ \aligned
|A_id_{n+j}(x+v)|^2 &= \left| \frac{1}{|E_i|}\sum_{y \in x+v - E_i} d_{n+j}(y)\right|^2 \lesssim \left(\frac{1}{|H_n|} \right)^2 \left| \sum_{y \in x+v - E_i} d_{n+j}(y)\right|^2 \\
&= \left(\frac{1}{|H_n|} \right)^2 \left| \sum_{y \in x+v - E_i} \sum_{Q : Q \cap \partial(x+v-E_i)} d_{n+j}1_Q(y) \right| \\
&\leq \left(\frac{1}{|H_n|} \right)^2 |B(E_i,R_{n+j})| \cdot \sum_{y \in x+v - E_i} |d_{n+j}(y)|^2 \\
&\leq \iota^2(j) \cdot \frac{1}{|H_n|}1_{H_n}*|d_{n+j}|^2(x+v) \\
&\lesssim \iota^2(j) \cdot \frac{1}{|2H_n|}1_{2 H_n}*|d_{n+j}|^2(x) \\
&\leq \iota^2(j) \cdot  Md_{n+j}(x)^2.
 \endaligned\]
Since $\E_n d_{n+j} \equiv 0$, this immediately yields the result for the long variation.

For the short variation, the proof of \cite[Theorem B]{J1} leads to the bound
\[ |\s_{n}^S d_{n+j}(y)|^2 \lesssim \iota(j)^2 \cdot \chi_{H_k}*d_{n+j}^2(y). \]
%(For the sake of completeness, we include the details after the proof proper.)
Substituting $y= x+v$ where $v=v(x) \in H_k$ is such that
\[ \tilde{\s}_{n}^S d_{n+j}(x) = \s_{n}^S d_{n+j}(x+v)\]
yields
\[ |\tilde{\s}_{n}^S d_{n+j}(x)|^2 \lesssim \iota(j)^2 \cdot \chi_{H_k}*d_{n+j}^2(x+v) \lesssim \iota(j)^2 \cdot  \chi_{4H_k}*d_{n+j}^2(x). \]
Integrating this estimate, then summing over $n$ yields
\[ \sum_n \int |\tilde{\s}_{n}^S d_{n+j}(x)|^2 \lesssim \iota(j)^2 \cdot \sum_n \|d_{n+j}\|_{L^2(\Z^d)}^2 = \iota(j)^2 \|f\|_{L^2(\Z^d)}^2.\]
A final sum over $j < -C$ shows
\[ \sum_j \left\| (\sum_n |\tilde{\s}_n^* ( d_{n+j})|^2)^{1/2} \right\|_{L^2(\Z^d)} \leq \sum_{j < -C} \iota(j) \cdot \|f\|_{L^2(\Z^d)} \lesssim \|f\|_{L^2(\Z^d)}.\]
\end{proof}

The corollary below is a direct consequence of the previous propositions:
\begin{cor}
The discretized square functions
\[ \aligned
\s_{d}^{*}f &:= \left(\sum_k |\s_{d,k}^{*} f|^2\right)^{1/2}, \\
\s_{D}^{*}f &:= \left(\sum_k |\s_{D,k}^{*} f|^2\right)^{1/2}
\endaligned \]
$*=L,S$ are $L^2$ bounded.
\end{cor}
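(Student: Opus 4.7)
The proof is essentially a bookkeeping exercise chaining the three results immediately preceding the corollary, so the plan is short. The key observation is that, by Fubini / monotone convergence, the $L^2(\Z^d)$-norm-squared of any of the three square functions $\tilde{\s}^*$, $\s_D^*$, $\s_d^*$ is exactly the sum over $k$ of the $L^2$-norm-squared of its $k$-th piece, so it suffices to establish the bound $\|\s_{D,k}^* f\|_{L^2}^2 + \|\s_{d,k}^* f\|_{L^2}^2 \lesssim \|\tilde{\s}_k^* f\|_{L^2}^2$ for every $k$, and then sum in $k$ and apply Proposition \ref{L2}.

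First I would handle $\s_D^*$. Lemma \ref{smooth} already gives, at each fixed $k$, that $\|\s_{D,k}^* f\|_{L^2(\Z^d)} \approx \|\tilde{\s}_k^* f\|_{L^2(\Z^d)}$. Squaring and summing in $k$ exchanges sums with integrals (via Fubini) on both sides, yielding
\[ \|\s_D^* f\|_{L^2(\Z^d)}^2 \;=\; \sum_k \|\s_{D,k}^* f\|_{L^2(\Z^d)}^2 \;\approx\; \sum_k \|\tilde{\s}_k^* f\|_{L^2(\Z^d)}^2 \;=\; \|\tilde{\s}^* f\|_{L^2(\Z^d)}^2, \]
which is $\lesssim \|f\|_{L^2(\Z^d)}^2$ by Proposition \ref{L2}.

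For $\s_d^*$, I would simply note that the Lebesgue counting measure (i.e. the weight $v \equiv 1$) is trivially in $A_1(\Z^d) \subset A_\infty(\Z^d)$, so the second lemma of the preceding pair applies with $v \equiv 1$ to give $\|\s_{d,k}^* f\|_{L^2(\Z^d)} \approx \|\s_{D,k}^* f\|_{L^2(\Z^d)}$ at each $k$. Chaining with the previous bound,
\[ \|\s_d^* f\|_{L^2(\Z^d)}^2 \;=\; \sum_k \|\s_{d,k}^* f\|_{L^2(\Z^d)}^2 \;\approx\; \sum_k \|\s_{D,k}^* f\|_{L^2(\Z^d)}^2 \;\lesssim\; \|f\|_{L^2(\Z^d)}^2, \]
which completes both cases.

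There is no real obstacle: all of the analytic content has already been absorbed into Proposition \ref{L2} (the orthogonality argument against the martingale differences $d_k$) and into the two comparison lemmas (which handle the passage from the continuous suprema in $\tilde{\s}_k^*$ to the discretized dyadic versions via the sets $X(Q)$ and the bounded overlap of $\{3Q\}$). The only mild point to be careful about is making sure the comparison lemmas are applied at each fixed $k$ before summing, since the square-function norm is a sum of squared $k$-level norms rather than a single norm to which one comparison can be applied; but this is immediate by Fubini.
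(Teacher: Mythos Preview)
Your proposal is correct and matches the paper's approach: the corollary is stated there as ``a direct consequence of the previous propositions'' with no further proof, and your chaining of Lemma~\ref{smooth}, the $A_\infty$-comparison lemma (with $v\equiv 1$), and Proposition~\ref{L2} via Fubini at each fixed $k$ is exactly the intended deduction. One could shorten the $\s_d^*$ case slightly by observing the pointwise bound $\s_{d,k}^* f \leq \s_{D,k}^* f$ (since $1_Q \leq 1_{3Q}$), but your route through the comparison lemma is equally valid.
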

%Its simple structure as -- essentially -- a sum of projections allow us to use the stopping time/good-$\lambda$ machinery of \cite{W} in its study.
%In the following sections we shall use these tools to obtain quantitative bounds on the convergence.

\section{High-$L^p(\Z^d)$ Estimates}
In this section we prove Theorem \ref{hi} by way of the following
\begin{proposition}
If $\A$ is regular, then $\s_{d}^{*}f, \ *=L,S$ is $L^p(\Z^d)$-bounded, $2<p<\infty$.
\end{proposition}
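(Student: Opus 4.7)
The plan is to apply the Fefferman--Stein sharp maximal function inequality in order to leverage the $L^2(\Z^d)$ boundedness of $\s_d^*$ just obtained. With $g^\#(x) := \sup_{Q \ni x} \inf_{c \in \CC} \frac{1}{|Q|}\int_Q |g - c|$, the sharp function theorem gives $\|\s_d^* f\|_{L^p(\Z^d)} \lesssim \|(\s_d^* f)^\#\|_{L^p(\Z^d)}$ for any $2 \leq p < \infty$, the $L^2$ bound providing the needed a priori finiteness. Hence once we establish the pointwise inequality
\[ (\s_d^* f)^\#(x) \lesssim M_{HL}(|f|^2)(x)^{1/2}, \]
the $L^{p/2}$-boundedness of $M_{HL}$ for $p > 2$ yields
\[ \|\s_d^* f\|_{L^p(\Z^d)} \lesssim \|M_{HL}(|f|^2)^{1/2}\|_{L^p(\Z^d)} = \|M_{HL}(|f|^2)\|_{L^{p/2}(\Z^d)}^{1/2} \lesssim \|f\|_{L^p(\Z^d)}, \]
as required.

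To establish the sharp function bound, fix a dyadic cube $Q$ of scale $2^j$ and a point $x \in Q$ (a standard covering argument reduces the supremum over arbitrary cubes to this dyadic case). Choose $C = C(L,d)$ so large that for every $k < j$ and every $y \in Q$ the value $\tilde{\s}_k^* h(y)$ depends only on $h|_{CQ}$; this is possible because the shift $v \in H_k$, the kernel $\chi_t$ supported in $E_t \subset H_k$, and the $\sigma_k$-atom containing $x+v$ all displace the argument by at most $O(2^k) \leq O(2^{j-1})$. Split $f = f_1 + f_2$ with $f_1 := f \cdot 1_{CQ}$. For $k \geq j$, let $R_k \in \sigma_k$ denote the unique scale-$k$ dyadic cube containing $Q$, and set
\[ c_{Q,k} := \begin{cases} 0, & k < j, \\ \tilde{\s}_k^* f_2(x_{R_k}), & k \geq j, \end{cases} \qquad c_Q := \Bigl( \sum_k c_{Q,k}^2 \Bigr)^{1/2}. \]

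For $y \in Q$, the reverse triangle inequality on $\ell^2$ gives $|\s_d^* f(y) - c_Q|^2 \leq \sum_k |\s_{d,k}^* f(y) - c_{Q,k}|^2$. When $k < j$, $\s_{d,k}^* f(y) = \tilde{\s}_k^* f(x_{R_k})$ for the unique $R_k \in \sigma_k$ containing $y$, and since $\tilde{\s}_k^* f_2(x_{R_k}) = 0$ by our choice of $C$, sublinearity yields $\tilde{\s}_k^* f(x_{R_k}) \leq \tilde{\s}_k^* f_1(x_{R_k}) = \s_{d,k}^* f_1(y)$. When $k \geq j$, the reverse triangle inequality for the sublinear $\tilde{\s}_k^*$ provides $|\tilde{\s}_k^* f(x_{R_k}) - \tilde{\s}_k^* f_2(x_{R_k})| \leq \tilde{\s}_k^* f_1(x_{R_k}) = \s_{d,k}^* f_1(y)$. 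Summing gives $|\s_d^* f(y) - c_Q|^2 \leq |\s_d^* f_1(y)|^2$, so by Cauchy--Schwarz and the $L^2$ bound for $\s_d^*$,
\[ \frac{1}{|Q|}\int_Q |\s_d^* f - c_Q| \leq \left( \frac{1}{|Q|}\|\s_d^* f_1\|_{L^2(\Z^d)}^2 \right)^{1/2} \lesssim \left( \frac{1}{|Q|} \int_{CQ} |f|^2 \right)^{1/2} \lesssim M_{HL}(|f|^2)(x)^{1/2}. \]

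The main technical point is the local-support verification needed to guarantee $\tilde{\s}_k^* f_2 \equiv 0$ on $Q$ for $k < j$; once this bookkeeping is done, everything else is a textbook sharp-function decomposition in which the far part $f_2$ is absorbed into the scalar $c_Q$ and only the local part $f_1$ contributes to the oscillation, where it is controlled by the already-established $L^2$ theory. It is worth noting that no regularity beyond what goes into the $L^2$ bound (the $\iota,\varepsilon$ estimates from the JRW criteria) is needed in this high-$L^p$ step itself.
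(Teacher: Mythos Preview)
Your overall strategy --- bound a sharp function of $\s_d^* f$ by a maximal function of $|f|^2$ and feed in the $L^2$ theory --- is exactly the paper's. The gap is in the choice of sharp function. You take the Fefferman--Stein sharp function over \emph{cubes} and then compare the cube's side-length exponent $j$ directly with the filtration index $k$. But under mere regularity the atoms of $\sigma_k$ are rectangles with side lengths $2^{a(k)}$, and nothing forces $a(k)\approx k$: take for instance $d=2$, $a_1(k)=k$, $a_2(k)=2k$, which satisfies the eccentricity criterion with $L=1$. Your assertion that ``the shift $v\in H_k$ \dots displaces the argument by at most $O(2^k)\le O(2^{j-1})$'' is then false; the displacement is $O(\max_a 2^{a(k)})$. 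For a cube $Q$ of side $2^j$ there is an unbounded range of scales $k$ (roughly $j/2\lesssim k\lesssim j$ in the example) for which neither does $\tilde\s_k^* f_2$ vanish on $Q$ (the rectangle $H_k$ protrudes from $CQ$ in the long direction) nor is $\s_{d,k}^* f$ constant on $Q$ (the $\sigma_k$-atom is shorter than $Q$ in the short direction), and your dichotomy collapses. The statement ``let $R_k\in\sigma_k$ denote the unique scale-$k$ dyadic cube containing $Q$'' already presupposes a containment that need not hold.

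The paper sidesteps this by taking the sharp function $\M^{\#}$ with respect to the refined \emph{rectangular} filtration $\{\tau_j\}$ itself: for $R\in\tau_j$ with $j(k')<j\le j(k'+1)$, every $\sigma_k$-atom with $k\le k'$ sits inside $R$, while for $k>k'$ the atom contains $R$, so the tail $\sum_{k>k'}|\s_{d,k}^* f|^2$ is \emph{exactly} constant on $R$ --- no reverse triangle inequality or splitting $f=f_1+f_2$ is needed. Subtracting that constant and using the $L^2$ bound on $\s_d^*(f1_{5R})$ gives $\M^{\#}((\s_d^* f)^2)\lesssim M'(f^2)$, and the martingale good-$\lambda$ inequality for $\M,\M^{\#}$ finishes. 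Your argument becomes correct once cubes are replaced by atoms of $\{\tau_j\}$ and $M_{HL}$ by $M'$; as written it implicitly assumes cubicity, a strictly stronger hypothesis than regularity (cf.\ the remark opening \S4).
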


We refine our (reverse) filtration $\{\sigma_k\}_k$ to $\{ \tau_{j} \}_j$, where $\tau_{j(k)} = \sigma_k$, and so successive atoms differ in size by a factor of $2$:
if
\[ R_j = \prod_{a=1}^d [0, 2^{a(j)}) \in \tau_j \]
are generating atoms, with ``symmetrized'' rectangles $H_j' = \prod_{a=1}^d [-2^{a(j)}, 2^{a(j)})$, then we have
\[ \sum_{a=1}^d |a(k+1) - a(k)| = 1.\]
We define the maximal operators
\[ \M f(x) := \sup_{j } | \E'_j f|,\]
\[ M'f(x) := \sup_{j } \sup_{x \in y+5H_j'} \frac{1}{|5H_j'|} \int_{y+5H_j'} |f| ,\]
where $\E'_j$ is the expectation with respect to $\tau_j$ (so $\E'_{j(k)} = \E_k$).
We also define the sharp function associated to our refined filtration
\[ \M^{\#} f(x): = \sup_j \sup_{x \in R \in \tau_j} \inf_{a} \frac{1}{|R|} \int_R |f - a|.\]

We certainly have that $\M^{\#} f \leq \M f$; the familiar good-$\lambda$ inequality
\[ |\{ \M f > 2\lambda, \M^{\#} f \leq \gamma \lambda \} | \lesssim \gamma |\{ \M f > \lambda \}|\]
holds (the implicit constant is in fact $2$), and by integrating distribution functions we see that the sharp function controls the maximal function in $L^p(\Z^d)$.

The key result we need is the following:
\begin{lemma}\label{sharp}
\[ \M^{\#} (\s_{d}^{*}f^2)(x) \lesssim M' (f^2)(x). \]
\end{lemma}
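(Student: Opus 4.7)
The plan is to run a Fefferman--Stein-type sharp-function argument, exploiting the additive-in-scale identity $(\s_d^*f)^2 = \sum_k |\s_{d,k}^*f|^2$. Fix an atom $R \in \tau_j$ containing $x$, choose a constant dilate $B = y + 5H_j'$ of $\tau_j$-shape with $R \cup \{x\} \subset B$ and $|B| \sim |R|$, and decompose $f = f_1 + f_2$ with $f_1 := f \cdot 1_B$. The goal is to construct a single constant $a = a(f,R)$ such that
\[ \frac{1}{|R|} \int_R \bigl| (\s_d^*f)^2 - a \bigr| \lesssim M'(f^2)(x); \]
taking the supremum over $R \ni x$ then delivers the lemma.

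To choose $a$, let $n := \min\{k : j(k) \geq j\}$. For $k \geq n$ the relation $j \leq j(k)$ places $R$ inside a single $\sigma_k$-atom $Q_k$, so $\s_{d,k}^*f$ reduces to the constant $\tilde{\s}_k^*f(x_{Q_k})$ on $R$; for $k < n$, $R$ splits as a disjoint union of $\sigma_k$-atoms. Setting
\[ a := \sum_{k \geq n} \bigl|\tilde{\s}_k^*f(x_{Q_k})\bigr|^2, \]
one obtains the identity $(\s_d^*f)^2 - a = \sum_{k < n} |\s_{d,k}^*f|^2$ on $R$.

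The core of the argument is the locality claim: $\s_{d,k}^*f \equiv \s_{d,k}^*f_1$ on $R$ for every $k < n$. Because the refinement $\tau$ advances $\sigma$ by doubling one coordinate per step, the shape vector $(a_a(j))$ of $R$ dominates $(a_a(k))$ coordinate-wise whenever $j \geq j(k)$; consequently $H_k$ embeds into a translate of $2R$, and the ``dependence region'' $R + 2H_k$ sits inside $B$. Unwinding the suprema that define $\tilde{\s}_k^* = \sup_{v \in H_k} \s_k^*(\cdot + v)$ then shows that every average $A_t f$ and expectation $\E_k f$ entering $\s_k^*$ at points of $R$ samples only values of $f|_B = f_1$, which yields the claimed pointwise identity. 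I expect verifying this coordinate-wise shape comparison -- and tracking the $v \in H_k$ supremum through the definition of $\s_k^L$ and $\s_k^S$ -- to be the most delicate step.

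Combining these, on $R$ we have $(\s_d^*f)^2 - a = \sum_{k<n} |\s_{d,k}^*f_1|^2 \leq (\s_d^*f_1)^2$ pointwise. The $L^2(\Z^d)$-boundedness of $\s_d^*$ from the Corollary to Proposition \ref{L2} then gives
\[ \frac{1}{|R|} \int_R \bigl| (\s_d^*f)^2 - a \bigr| \leq \frac{\|\s_d^*f_1\|_{L^2(\Z^d)}^2}{|R|} \lesssim \frac{\|f_1\|_{L^2(\Z^d)}^2}{|R|} = \frac{1}{|R|}\int_B |f|^2 \lesssim \frac{1}{|B|} \int_B |f|^2 \leq M'(f^2)(x), \]
using $|B| \sim |R|$ in the penultimate step, which finishes the sharp-function estimate.
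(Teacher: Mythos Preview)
Your proposal is correct and follows essentially the same route as the paper's proof: both split the scales at $k'$ (your $n-1$), recognize that $\sum_{k>k'}|\s_{d,k}^*f|^2$ is constant on $R$ by nesting, localize the small-scale tail $\sum_{k\le k'}|\s_{d,k}^*f|^2$ to a bounded dilate of $R$ (the paper uses $5R$, you use $B=y+5H_j'$), and conclude via the $L^2$-boundedness of $\s_d^*$ applied to $f1_{5R}$ (respectively $f_1$). Your explicit discussion of the coordinate-wise shape comparison simply fills in what the paper records as ``by the nesting properties $R\supset Q\in\sigma_k$ for $k\le k'$''; one minor point is that you should choose $y$ so that $B$ is centered on $R$ (rather than merely $R\subset B$) to ensure the claimed containment $R+2H_k\subset B$, but this is a cosmetic choice.
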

\begin{remark}
Informally, $\s_{d}^{*}f$ is in ``dyadic'' BMO with respect to the filtration $\{\tau_j\}$ whenever $f \in L^\infty(\Z^d)$.
\end{remark}

Assuming this lemma, with $p = 2r > 2$, we will have
\[ \aligned
\left\|\s_{d}^{*}f \right\|_{L^p(\Z^d)}^2 &= \left\| \s_{d}^{*}f^2 \right\|_{L^r(\Z^d)} \\
&\leq \left\| \M (\s_{d}^{*}f^2) \right\|_{L^r(\Z^d)} \\
&\lesssim \left\| \M^{\#} (\s_{d}^{*}f^2) \right\|_{L^r(\Z^d)} \\
&\lesssim \| M' (f^2) \|_{L^r(\Z^d)} \\
&\lesssim \| f^2 \|_{L^r(\Z^d)} \\
&= \| f\|_{L^p(\Z^d)}^2, \endaligned \]
proving the proposition.

\begin{proof}[Proof of Lemma \ref{sharp}]
Fix some $x \in \Z^d$, and let $x \in R \in \tau_j, j(k') < j \leq j(k'+1)$ be arbitrary. Express

\[ \aligned
\s_{d}^{*}f(x)^2 &= \sum_{k\leq k'} |\s_{d,k}^{*} f|^2 + \sum_{k > k'} |\s_{d,k}^{*} f|^2 \\
&= \sum_{k \leq k'} \sum_{Q \in \sigma_k} \tilde{\s}_{k}^{*} f(x_Q)^2 1_Q + \sum_{k > k'} \sum_{Q \in \sigma_k} \tilde{\s}_{k}^{*} f(x_Q)^2 1_Q \\
&= \sum_{k \leq k'} \sum_{Q \in \sigma_k} \tilde{\s}_{k}^{*} (f1_{5R}) (x_Q)^2 1_Q + c_R(f),
 \endaligned\]
by the nesting properties $R \supset Q \in \sigma_k$ for $k \leq k'$,
$R \subset Q \in \sigma_k$ for $k > k'$ for $x\in Q$.

We bound
\[ \aligned
\frac{1}{|R|}\int_R |\s_{d}^{*}f(x)^2 - c_R(f)| &=
\frac{1}{|R|} \int_R | \sum_{k \leq k'} \sum_{Q \in \sigma_k} \tilde{\s}_{k}^{*} (f1_{5R}) (x_Q)^2 1_Q | \\
&\leq \frac{1}{|R|} \int | \sum_{k \leq k'} \sum_{Q \in \sigma_k} \tilde{\s}_{k}^{*} (f1_{5R}) (x_Q)^2 1_Q | \\
&\leq \frac{1}{|R|} \left\| \s_{d}^{*}(f1_{5R}) \right\|_{L^2(\Z^d)}^2 \\
&\lesssim \frac{1}{|5R|}\int |f1_{5R}|^2 \\
&\lesssim M'(f^2)(x), \endaligned \]
which leads to
\[ \inf_{a} \frac{1}{|R|} \int_R |\s_{d}^{*}f(x)^2 - a| \leq M'(f^2)(x).\]
The lemma is proven by taking a final supremum over pertaining $R$.
\end{proof}

We now transfer this result back to $\s_D^*f$:

\begin{cor}
For each $2 < p < \infty$, $\s_D^* f$ -- the pointwise majorants of our shifted square functions -- are bounded on $L^p(\Z^d)$.
\end{cor}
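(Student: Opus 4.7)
The plan is to deduce the corollary from the preceding proposition by a short duality argument. Since $\s_d^* f$ is already controlled in $L^p(\Z^d)$ for $2<p<\infty$, it suffices to establish the comparability $\|\s_D^* f\|_{L^p(\Z^d)} \lesssim \|\s_d^* f\|_{L^p(\Z^d)}$ in this range; the restriction $p>2$ enters precisely through the requirement $(p/2)' > 1$, which will allow us to invoke the $L^{(p/2)'}$-boundedness of the auxiliary maximal function $M'$.

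Abbreviating $a_{k,Q} := \tilde{\s}_k^* f(x_Q)$, the first step is the pointwise majorization
\[ (\s_D^* f)^2(x) \;=\; \sum_k \left( \sum_{Q \in \sigma_k} a_{k,Q}\, 1_{3Q}(x) \right)^2 \;\lesssim\; \sum_{k} \sum_{Q \in \sigma_k} a_{k,Q}^2\, 1_{3Q}(x), \]
which follows at once from the fact that, for each $k$, the dilated collection $\{3Q\}_{Q \in \sigma_k}$ has dimensional overlap, so the inner square can be linearized via Cauchy-Schwarz.

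Setting $q := p/2 > 1$, duality gives
\[ \|\s_D^* f\|_{L^p(\Z^d)}^2 \;=\; \sup \int (\s_D^* f)^2\, g, \]
the supremum taken over non-negative $g$ with $\|g\|_{L^{q'}(\Z^d)} \leq 1$. Combining the pointwise bound above with the routine estimate $\int_{3Q} g \lesssim |Q| \inf_{y \in Q} M'g(y) \leq \int_Q M'g$ yields
\[ \int (\s_D^* f)^2\, g \;\lesssim\; \sum_{k,Q} a_{k,Q}^2 \int_Q M'g \;=\; \int (\s_d^* f)^2 \cdot M'g. \]
H\"older's inequality, the proposition, and the $L^{q'}$-boundedness of $M'$ then close the estimate:
\[ \int (\s_d^* f)^2 \cdot M'g \;\leq\; \|\s_d^* f\|_{L^p(\Z^d)}^2 \,\|M'g\|_{L^{q'}(\Z^d)} \;\lesssim\; \|f\|_{L^p(\Z^d)}^2. \]
Taking the supremum over $g$ yields the desired bound.

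The only genuine point is the opening pointwise inequality converting the $\ell^2$-norm of sums indexed by neighboring cubes into a sum of $\ell^2$-norms; once this is in hand, the rest of the argument is routine, and no serious obstacle remains.
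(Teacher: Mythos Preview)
Your argument is correct and follows essentially the same duality strategy as the paper: linearize $\|\s_D^*f\|_p^2$ against a dual function, replace that function by a maximal variant so as to pass from integrals over $3Q$ to integrals over $Q$, and then invoke the $L^p$ bound for $\s_d^*f$ together with the $L^{(p/2)'}$-boundedness of the maximal operator.

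The only difference is tactical. The paper uses the Coifman--Rochberg construction $M_{HL,t}w := (M_{HL}(w^t))^{1/t} \in A_1 \subset A_\infty$ and then appeals to the earlier lemma that $\s_{D,k}^*f$ and $\s_{d,k}^*f$ are $L^2(v)$-comparable for $v \in A_\infty$; you instead go directly through the elementary pointwise estimate $\int_{3Q} g \lesssim \int_Q M'g$, which is cleaner and sidesteps the weight machinery entirely. Your choice of $M'$ (adapted to the rectangles $5H_j'$) rather than $M_{HL}$ is also more natural in the merely \emph{regular} setting of this section, where the $Q\in\sigma_k$ need not be comparable to cubes.
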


\begin{proof}
Let $p = 2r > 2$, and $r'$ denote the dual exponent to $p/2 = r$. For an appropriate non-negative function $w \geq 0, \|w\|_{L^{r'}} = 1$ we estimate
\[ \aligned
\| \s_D^*f\|_p^2 &= \| |\s_D^*f|^2 \|_r\\
&= \sum_k \int |\s_{D,k}^*f|^2 w \\
&\leq \sum_k \int |\s_{D,k}^*f|^2 M_{HL,t}w, \endaligned\]
where
\[ M_{HL,t}w := ( M_{HL} (w^t) )^{1/t}\]
for $1 < t <r'$. By e.g.\ \cite[\S V.6.15]{S}, we know that $M_{HL,t} w \in A_1 \subset A_\infty$, so that we have
\[ \int |\s_{D,k}^*f|^2 M_{HL,t}w \lesssim \int |\s_{d,k}^*f|^2 M_{HL,t}w \]
for each $k$; summing appropriately yields
\[ \aligned
\| \s_D^*f\|_p^2 &\lesssim \int |\s_{d}^*f|^2 M_{HL,t}w \\
&\leq \| |\s_d^* f |^2\|_r \| M_{HL,t}w \|_{r'} \\
&= \| \s_d^* f\|_p^2 \| M_{HL,t}w \|_{r'} \\
&\lesssim \|f\|_p^2, \endaligned\]
since $M_{HL,t}$ maps $L^{r'}$ to itself any $r' > t$.
\end{proof}

As corollaries, we are now able to affirmatively answer the following problems.

\begin{problem}[\cite{J1}, Problem 7.1]
Let $D_1 \subset D_2 \subset \dots \subset \Z^d$ be a nested sequence of (closed) disks (without loss of generality containing the origin) and let $p > 2$. If
\[ A_i f (n) := \frac{1}{|D_i|} \sum_{m \in D_i} f(n-m) \]
are convolution operators, is it true that the square function
\[ \s_{d} f:= (\sum_i |(A_i -A_{i+1})f|^2)^{1/2} \]
is bounded on $L^p$ -- and thus
\[ \s_{\text{abstract}, d} := (\sum_i |(M_i -M_{i+1})f|^2)^{1/2} \]
is finite $\mu$-a.e.?
\end{problem}

\begin{problem}[\cite{J} Question 4.7] \label{Q1}
Let $\E_k f$ be the usual dyadic martingale on $[0,1)$, and let $D_kf(x)= 2^k \int_{I_k(x)} f(t) \ dt$, where $I_k(x)$ is a measurably chosen interval of length $2^{-k}$ which contains the point $x$.
Does the square function
\[ \s f(x) = ( \sum_{k=0}^\infty |D_k f(x) - \E_k f(x)|^2)^{1/2} \]
map $L^p \to L^p$ for all $2\leq p<\infty$?
\end{problem}

\section{Weighted Estimates}
\subsection{Preliminaries}
We continue to make use of the refined our (reverse) filtration $\{ \tau_{j} \}_j$, where $\tau_{j(k)} = \sigma_k$, and
the maximal operators
\[ \aligned
 \M f(x) &:= \sup_{j } | \E'_j f|, \\
 M'f(x) &:= \sup_{j } \sup_{x \in y+5H_j'} \frac{1}{|5H_j'|} \int_{y+5H_j'} |f| , \endaligned \]
were introduced in the previous section.

Throughout this section, we will assume that $M'f \lesssim M_{HL} f$ pointwise, where we recall $M_{HL}$ is the uncentered, cubic Hardy-Little maximal function.
This condition forces additional smoothness on the \emph{refined} collection $\{H_j'= \prod_{a=1}^d [-2^{a(j)}, 2^{a(j)}) \}$:
\[ \sup_{j} \max \{ |a(j) - a'(j)| : 1 \leq a,a' \leq d\} \leq K \]
for some absolute $K = O(1)$. We shall call such families $\A = \{E_i\}$ \emph{cubic}.
\begin{remark}
\emph{Cubicity} is a strictly stronger statement than the previous control over the eccentricities of the $\{ H_k\}$, as seen for instance by considering
\[ \{ H_k = [-2^{2^k}, 2^{2^k}) \times [-2^k,2^k) \} \]
inside $\Z^2$. Indeed, one may take $\varepsilon(l)^2 \lesssim 2^{-l}$, but the maximal function associated to the $\{H_k\}$ is pointwise incomparable to $M_{HLW}$.
\end{remark}

In this section, we will leverage especially strong weighted $L^2(\Z^d)$ bounds on our (sublinear) square functions to prove certain weighted weak-type $(1,1)$ estimates. The key tool used to achieve such lowering of exponents is the following classical decomposition technique due to Calder\'{o}n and Zygmund.
\begin{lemma}[Calder\'{o}n-Zygmund Decomposition]\label{CZD}
For any altitude $\lambda >0$, any $f \in L^1(\Z^d)$ may be decomposed in the form
\[f = g + b = g + \sum_P b_P, \]
where the sum runs over certain disjoint selected cubes $P \in \bigcup_j \tau_j$. $g$, the so-called ``good" function, satisfies
\begin{enumerate}
\item $\| g\|_{L^\infty(\Z^d)} \lesssim \lambda$; and
\item $\| g \|_{L^1(\Z^d)} \leq \| f \|_{L^1(\Z^d)}$.
\end{enumerate}
The ``bad" function, $b = \sum_P b_P$, satisfies the following properties:
\begin{enumerate}
\item Each $b_P$ is supported inside $P$;
\item $\int b_P = 0$;
\item $\| b_P \|_{L^1(\Z^d)} \lesssim \lambda |P|;$
\item $\sum_P |P| \leq \frac{1}{\lambda} \|f\|_{L^1(\Z^d)}$.
\end{enumerate}
\end{lemma}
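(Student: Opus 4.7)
The approach is the standard Calderón-Zygmund stopping-time construction adapted to the refined dyadic filtration $\{\tau_j\}$. Replacing $f$ by $|f|$ where appropriate for the selection step, I exploit two features of this filtration: at sufficiently fine $j$ the atoms become singletons in $\Z^d$, so $\E'_j(|f|)(x) = |f(x)|$, while at sufficiently coarse $j$ the atoms swell and $\E'_j(|f|)(x) \to 0$ since $f \in L^1(\Z^d)$. Consequently, for each $x$ such that $\E'_j(|f|)(x) > \lambda$ for some $j$, there is a well-defined coarsest atom $P \ni x$ on which $\tfrac{1}{|P|}\int_P |f| > \lambda$. Let $\{P\}$ be the collection of all such maximal atoms; by maximality they are pairwise disjoint.

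The quantitative control comes from examining the immediate parent $\widehat P$ of each selected $P$. Since $\widehat P$ was not selected, $\tfrac{1}{|\widehat P|}\int_{\widehat P} |f| \leq \lambda$, and because the refined filtration doubles atomic volume at each step, $|\widehat P| = 2|P|$. Thus
\[ \lambda < \frac{1}{|P|}\int_P |f| \leq 2\lambda. \]
Summing the left-hand inequality over the disjoint $\{P\}$ gives $\sum_P |P| \leq \lambda^{-1}\|f\|_{L^1(\Z^d)}$, which is property (4) of the bad part.

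With the selection complete, I set
\[ g := f \cdot 1_{(\bigcup_P P)^c} + \sum_P \left(\frac{1}{|P|}\int_P f\right) 1_P, \qquad b_P := \left(f - \frac{1}{|P|}\int_P f\right) 1_P. \]
Properties (1)--(3) of $b_P$ are then immediate from the definition: support in $P$, mean zero, and $\|b_P\|_{L^1(\Z^d)} \leq 2\int_P |f| \leq 4\lambda|P|$ by the upper bound. For $g$, the $L^1$ bound follows from $|g| \leq |f|$ on the complement of $\bigcup_P P$ together with $\int_P |g| \leq \int_P |f|$ on each selected atom. The $L^\infty$ bound splits into two cases: on each $P$, $|g| \leq \tfrac{1}{|P|}\int_P |f| \leq 2\lambda$; on the complement, no atom containing $x$ has average exceeding $\lambda$, and choosing $j$ fine enough that the atom of $x$ in $\tau_j$ is $\{x\}$ itself forces $|f(x)| \leq \lambda$ pointwise.

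There is essentially no obstacle here; the proof is classical. The only point requiring mild care is checking that the refined filtration $\{\tau_j\}$ is simultaneously fine enough (singleton atoms exist) and coarse enough (averages tend to $0$) to make the stopping-time selection well-defined for every point, both of which are structural features of the dyadic refinement together with $f \in L^1(\Z^d)$.
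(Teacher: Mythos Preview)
Your proof is correct and is precisely the standard stopping-time construction; there is nothing to criticize. Note, however, that the paper does not actually supply a proof of this lemma --- it states the decomposition as a classical tool and later (in the proof of the weighted $L^{1,\infty}$ estimate) indicates the same choice $b_P = (f - f_P)1_P$, $g = f1_{E^c} + \sum_P f_P 1_P$ that you write down. So your argument is not so much the same approach as the paper's as it is a fleshing-out of what the paper leaves implicit.
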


With this in hand, we are ready to turn to our main results.

\subsection{Weighted Estimates}
Following the approach of \cite[\S 6]{W}, we prove the following weighted results:

\begin{proposition}[Weighted $L^2$]
For any non-negative function $w \geq 0$, there exists an absolute constant $C_2= C_2(\{\iota(l)\}, \{\varepsilon(l)\})$ so that
\[ \int |\s_{D}^{*}f|^2 w \leq C_2 \ \int |f|^2 M_{HL} w.\]
Consequently, for any $A_1$ weight $v \in A_1$, we have
\[ \int |\s_{D}^{*}f|^2 v \leq C_2 \ \int |f|^2 v.\]
\end{proposition}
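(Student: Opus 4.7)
The plan is to mimic the martingale-style unweighted $L^{2}$ argument of Proposition \ref{L2}, carrying the weight $w$ through each step and replacing each use of Plancherel/orthogonality by a Fefferman--Stein-type inequality against $M_{HL}w$. First I would trade $\s_D^{*}f$ for its cleaner $Q$-supported cousin $\s_d^{*}f$, paying a factor of $M_{HL}$ on the weight. Using the bounded overlap of $\{3Q\}_{Q\in\sigma_k}$ together with the pointwise lower bound $w(3Q)/|3Q|\leq M_{HL}w(y)$ for any $y\in Q$, one obtains
\[ \int |\s_{D,k}^{*}f|^{2}\,w \;\lesssim\; \sum_{Q\in\sigma_k}\tilde{\s}_k^{*}f(x_Q)^{2}\,w(3Q) \;\lesssim\; \int |\s_{d,k}^{*}f|^{2}\,M_{HL}w, \]
which after summation in $k$ reduces matters to showing $\|\s_d^{*}f\|_{L^{2}(M_{HL}w)}\lesssim \|f\|_{L^{2}(M_{HL}w)}$.

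For the reduced statement I would retrace Proposition \ref{L2} verbatim but in $L^{2}(M_{HL}w)$: decompose $f=\sum_k d_k$ with $d_k=\Delta_k f$, apply Minkowski's integral inequality to reduce to per-shift estimates $T_j:=\bigl\|(\sum_n|\tilde{\s}_n^{*}d_{n+j}|^{2})^{1/2}\bigr\|_{L^{2}(M_{HL}w)}$, and feed in the pointwise bounds already established there. For $j<-C$ the bound $|\tilde{\s}_n^{*}d_{n+j}|^{2}\lesssim \iota(j)^{2}\,M(d_{n+j}^{2})$ combined with the Fefferman--Stein dual inequality $\int Mg\cdot u\lesssim \int g\cdot M_{HL}u$ (valid under cubicity) gives $T_j^{2}\lesssim \iota(j)^{2}\sum_n\int d_{n+j}^{2}\,M_{HL}^{2}w$. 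For $j\geq C$ the boundary support of $\tilde{\s}_n^{*}d_{n+j}$ lies in an $\varepsilon(j)$-thin strip of cubes in $\sigma_{n+j-1}$, and the $M_{HL}w$-measure of that strip is controlled by $\varepsilon(j)^{2\eta}$ times a local integral of $M_{HL}^{2}w$, yielding the analogous bound.

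The main obstacle is then closing the loop: the per-shift estimates reduce matters to dominating $\sum_n\int d_n^{2}\,M_{HL}^{2}w$ by $\int f^{2}\,M_{HL}w$. The iterated maximal $M_{HL}^{2}w$ is \emph{not} comparable to $M_{HL}w$ in general (since $M_{HL}w$ need not be $A_{\infty}$), so one cannot naively invoke the dyadic Littlewood--Paley inequality. The resolution I would pursue exploits the Coifman--Rochberg principle $(M_{HL}w)^{\delta}\in A_{1}$ for every $\delta\in(0,1)$: a H\"older splitting $M_{HL}w=(M_{HL}w)^{1-\delta}\cdot (M_{HL}w)^{\delta}$ inserts a genuine $A_{\infty}$ majorant through which the dyadic square function inequality may be applied, and the resulting $\delta$-loss is absorbed by the room afforded by $\sum_j\iota(j)^{\theta},\sum_j\varepsilon(j)^{\theta}<\infty$ for every $\theta>0$. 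The consequence for $A_{1}$ weights is then immediate, since $v\in A_{1}$ gives $M_{HL}v\lesssim v$ pointwise.
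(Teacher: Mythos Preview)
Your strategy diverges from the paper's and, as written, does not close. The paper does \emph{not} rerun the martingale argument of Proposition~\ref{L2} inside $L^{2}(M_{HL}w)$. Instead it slices the weight into level sets: with
\[
F(j,k):=\Bigl\{P\in\sigma_k:\ 2^{j}<\tfrac{w(3P)}{|3P|}\le 2^{j+1}\Bigr\}\subset\{M_{HL}w\gtrsim 2^{j}\},
\]
one has $5P\subset\{M_{HL}w\gtrsim 2^{j}\}$ whenever $P\in F(j,k)$, so by locality $\tilde{\s}_k^{*}f(x_P)=\tilde{\s}_k^{*}\bigl(f\,1_{\{M_{HL}w\gtrsim 2^{j}\}}\bigr)(x_P)$. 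After interchanging sums and replacing $w(3P)$ by $2^{j}|P|$, the \emph{unweighted} $L^{2}$ bound for $\s_d^{*}$ is applied once to each truncated function $f\,1_{\{M_{HL}w\gtrsim 2^{j}\}}$, and the sum in $j$ reconstitutes $M_{HL}w$. No maximal function is ever iterated.

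By contrast, your route lands you at $\sum_n\int d_n^{2}\,M_{HL}^{2}w\lesssim\int f^{2}\,M_{HL}w$, which is exactly the obstruction you flag, and your proposed fix does not resolve it. The Coifman--Rochberg fact $(M_{HL}w)^{\delta}\in A_{1}$ gives you a weight for which the dyadic square function is bounded, but the H\"older splitting $M_{HL}w=(M_{HL}w)^{1-\delta}(M_{HL}w)^{\delta}$ leaves an unbounded factor $(M_{HL}w)^{1-\delta}$ that is not constant on atoms and cannot be pulled through the square-function inequality. More to the point, the ``$\delta$-loss'' you incur is a loss \emph{in the weight}, whereas the summability $\sum_j\iota(j)^{\theta}<\infty$ and $\sum_j\varepsilon(j)^{\theta}<\infty$ is slack \emph{in the shift parameter $j$}; these are orthogonal resources and one cannot be traded for the other. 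Without a genuine mechanism to undo the iteration $M_{HL}^{2}w\to M_{HL}w$, the argument stalls. The level-set decomposition in the paper sidesteps this entirely by never letting a second maximal function appear.
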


\begin{proposition}[Weighted $L^{1,\infty}$]
For any $v \in A_1$, there exists an absolute constant $C_1= C_1(\{\iota(l)\}, \{\varepsilon(l)\})$ so that
\[ \lambda v( \{ \s_{D}^{*}f > \lambda \} ) \leq C_1 \ \int |f| v \]
for all $\lambda \geq 0$.
\end{proposition}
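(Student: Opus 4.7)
The plan is to run a Calder\'{o}n--Zygmund argument adapted to the refined filtration $\{\tau_j\}$, using the weighted $L^2$ bound just established for the ``good'' part and cancellation for the ``bad'' part. Applying Lemma \ref{CZD} at altitude $\lambda$, decompose $f = g + b = g + \sum_P b_P$ and estimate the two pieces separately.

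For $g$, combine Chebyshev's inequality with the weighted $L^2$ proposition. Using $\|g\|_\infty \lesssim \lambda$ together with $\|g\|_{L^1(v)} \lesssim \|f\|_{L^1(v)}$ (which follows from $A_1$: on each selected $P$ the good function equals an average of $f$ over $P$, and the $A_1$ inequality $v(P)/|P| \lesssim \inf_P v$ lets one replace $v$ by its infimum on $P$), this yields
\[ v\bigl(\{\s_D^* g > \lambda/2\}\bigr) \lesssim \lambda^{-2}\, \|g\|_\infty \|g\|_{L^1(v)} \lesssim \lambda^{-1} \|f\|_{L^1(v)}. \]

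For $b$, let $\tilde{\Omega} := \bigcup_P P^*$ denote a suitable enlargement of the selected cubes. Using doubling of $v$ (valid since $A_1 \subset A_\infty$), the $A_1$ inequality $v(P) \leq B|P|\inf_P v$, and the CZ selection $\lambda |P| < \int_P |f|$, one estimates
\[ v(\tilde{\Omega}) \leq \sum_P v(P^*) \lesssim \sum_P |P| \inf_P v \leq \frac{1}{\lambda} \sum_P \inf_P v \int_P |f| \leq \frac{1}{\lambda} \|f\|_{L^1(v)}. \]
It remains to control $v\bigl(\{x \notin \tilde{\Omega} : \s_D^* b(x) > \lambda/2\}\bigr)$. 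Since each $\tilde{\s}_k^*$ is sublinear, Minkowski in $\ell^2_k$ gives the pointwise bound $\s_D^* b \leq \sum_P \s_D^* b_P$, reducing matters via Chebyshev to
\[ \sum_P \int_{(P^*)^c} \s_D^* b_P \cdot v \lesssim \|f\|_{L^1(v)}. \]

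For each selected $P \in \tau_{j_0}$, I split $\s_D^* b_P$ into contributions from scales $k \leq j_0$ and $k > j_0$. The small-scale part vanishes on $(P^*)^c$ once $P^*$ is chosen larger than the support of the kernels $\chi_{E_t}$ for $E_t \in \A_k$, $k \leq j_0$. For the large-scale part, the vanishing moment $\int b_P = 0$ together with JRW criterion (3) should produce a pointwise estimate of the shape $|\tilde{\s}_k^* b_P(x)|^2 \lesssim \iota(k-j_0)^2 M(|b_P|^2)(x)$, modeled on the computation in the proof of Proposition \ref{L2}. Integrating against $v$, using the $A_1$ property to replace $v$ by $\inf_P v$ in a neighborhood of the support of $\tilde{\s}_k^* b_P$, and summing in $k$ using $\sum_l \iota(l)^\theta < \infty$ should then give
\[ \int_{(P^*)^c} \s_D^* b_P \cdot v \lesssim \|b_P\|_{L^1} \inf_P v \lesssim \lambda v(P), \]
at which point a final sum over $P$ invoking $\sum_P v(P) \lesssim \lambda^{-1} \|f\|_{L^1(v)}$ closes the argument. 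The main obstacle lies precisely in this last step: producing a usable $L^1(v)$ decay estimate for $\s_D^* b_P$ off $P^*$. It requires combining the vanishing moment with the geometric smoothness hypotheses ($\iota, \varepsilon$) against the weight, and the $L^1$-to-$L^1(v)$ passage must borrow from the $A_1$ structure; the scale sum is paid for by the summability of $\iota$.
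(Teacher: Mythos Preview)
Your treatment of the good function and the exceptional set $\tilde\Omega$ is essentially the paper's. The gap is in the bad part.

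First, the pointwise estimate you invoke, $|\tilde{\s}_k^* b_P(x)|^2 \lesssim \iota(k-j_0)^2\, M(|b_P|^2)(x)$, is modeled on the bound for martingale differences $d_{n+j}$ in Proposition~\ref{L2}; that argument uses that $d_{n+j}$ is constant on $\sigma_{n+j-1}$-atoms, which $b_P$ is not. More seriously, even if it held it would be useless here: it involves $|b_P|^2$, and the Calder\'on--Zygmund decomposition gives no control on $\|b_P\|_2$.

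Second, the Minkowski step $\s_D^* b \le \sum_P \s_D^* b_P$ throws away the orthogonality between scales that the square function encodes, and there is no way to recover it in $L^1(v)$ on a single $b_P$. Off $P^*$ and at scale $k>j_0$ one has $|\s_{D,k}^* b_P|\lesssim \|b_P\|_1/|H_k|$ on a set of measure $\approx |H_k|$, so $\int |\s_{D,k}^* b_P|\approx \|b_P\|_1$ with no decay in $k$; the sum over $k$ diverges. The summability of $\iota$ does not save you here because the $\iota$-decay in the JRW criterion measures how many \emph{bad cubes} of a fixed smaller scale meet the boundary of a single $E_t$ --- it is a collective gain, not a gain for one $b_P$.

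The paper's fix is to keep the bad cubes grouped by scale, writing $b=\sum_l b^{k-l}$ at each output scale $k$, and to use $L^2$ Chebyshev rather than $L^1$. The key estimate is
\[
|\tilde{\s}_k^* b^{k-l}(x_R)|^2 \;\lesssim\; \iota(l)^2\,\frac{1}{|R|}\int_{x_R+4H_k}|b^{k-l}|,
\]
with the \emph{first} power of $|b^{k-l}|$ on the right; this is possible precisely because $\|b_Q\|_1\lesssim \lambda|Q|$ for every selected $Q$ (split $\|b_Q\|_1=\|b_Q\|_1^{1/2}\|b_Q\|_1^{1/2}\lesssim |Q|^{1/2}\|b_Q\|_1^{1/2}$ and apply Cauchy--Schwarz over the boundary cubes). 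One then Cauchy--Schwarzes in $l$ against $\iota(l)^{1/2}$, integrates against $v$, replaces $v$ by $M_{HL}v$ via the $A_1$ property, and sums in $k,l$. The $\iota$-summability closes the argument only because the bad cubes at each scale are treated together.
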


By interpolation, we get that for each $v \in A_1$
\[ \int |\s_{D}^{*}f|^p v \lesssim_p \ \int |f|^p  v \]
for all cubic families, $1<p\leq 2$. By Rubio de Francia extrapolation (cf.\ \cite[\S 7, p.143]{W} or \cite[Theorem 7.8]{D1}), we arrive at Theorem \ref{weight}:

\begin{theorem}
For all cubic families, we have
\[ \int |\s_{D}^{*}f|^p v \lesssim_p \ \int |f|^p v \]
for any $A_p$-weight $v$, $1 <p  < \infty$.
\end{theorem}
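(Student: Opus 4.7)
The plan is to derive the theorem as a direct consequence of Rubio de Francia extrapolation, fed by the two preceding weighted propositions. Those propositions give, for every $v \in A_1$, a strong bound $\|\s_D^* f\|_{L^2(v)} \lesssim \|f\|_{L^2(v)}$ and a weak bound $\|\s_D^* f\|_{L^{1,\infty}(v)} \lesssim \|f\|_{L^1(v)}$, both with constants depending only on $[v]_{A_1}$ and the JRW geometric parameters of $\A$.

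The first step is to upgrade these two $A_1$-weighted endpoint estimates to strong-type $L^p(v)$ bounds on the range $1 < p \leq 2$: for each fixed $v \in A_1$, Marcinkiewicz interpolation applied to the sublinear $\s_D^*$ between the weak $(1,1)$ bound and the strong $(2,2)$ bound produces $\|\s_D^* f\|_{L^p(v)} \lesssim_{p,[v]_{A_1}} \|f\|_{L^p(v)}$, with quantitative control on the constant. I would then invoke Rubio de Francia's extrapolation theorem in the form cited (see \cite[\S 7, p.~143]{W} or \cite[Theorem 7.8]{D1}): any sublinear operator bounded on $L^{p_0}(v)$ for every $v \in A_1$, with norm controlled by $[v]_{A_1}$, extends to a bounded operator on $L^p(v)$ for every $1 < p < \infty$ and every $v \in A_p$, with norm controlled by $[v]_{A_p}$. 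Applied with $T = \s_D^*$ and any $p_0 \in (1,2]$, this yields the conclusion for $\s_D^*$; the same statement for the geometrically meaningful $\s_\A^*$ then follows from the pointwise/$L^p$ comparisons $\s_\A^* \leq \tilde{\s}_\A^* \lesssim \s_D^*$ developed in Section~2.

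The only real obstacle, such as it is, is bookkeeping: Rubio de Francia extrapolation requires the $A_1$-weighted $L^{p_0}$ constant to depend on $v$ only through $[v]_{A_1}$. Inspection of the preceding proofs shows this is the case---the Calder\'on--Zygmund good/bad decomposition for the weak-type bound and the sharp-function argument for the weighted $L^2$ bound use the $A_1$ hypothesis only through the inequalities $v(Q)/|Q| \leq [v]_{A_1}\inf_Q v$ and $M_{HL} v \leq [v]_{A_1} v$. With that confirmed, the extrapolation step goes through verbatim.
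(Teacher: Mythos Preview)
Your proposal is correct and matches the paper's approach exactly: interpolate the preceding $A_1$-weighted weak-$(1,1)$ and strong-$(2,2)$ propositions to obtain $L^p(v)$ bounds for $1 < p \le 2$ and all $v \in A_1$, then invoke Rubio de Francia extrapolation to pass to all $A_p$ weights and all $1 < p < \infty$. One small descriptive slip: the paper's weighted $L^2$ proposition is not proved by a sharp-function argument (that was the unweighted high-$L^p$ section) but by a scale-and-level-set decomposition of the weight; this does not affect your argument, and your observation that the constants depend on $v$ only through $[v]_{A_1}$ remains valid upon inspection of the actual proof.
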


\begin{proof}[Proof of $L^2$ Estimate]
The plan is decompose our operator according to scale -- and then the size of the weight $w$:

For $j \in \Z $, we collect
\[ F(j,k) := \{ P \in \sigma_k : 2^j < \frac{w(3P)}{|3P|} \leq 2^{j+1} \} \subset \{ M_{HL} w \gtrsim 2^j\}, \]
and let $F(j) = \cup_k F(j,k)$.

Using Tonelli's theorem to interchange the order of summation, we estimate
\[ \aligned
\int |\s_{D}^{*}f|^2 w &= \sum_k \int |\s_{D,k}^*f|^2 w \\
&\lesssim \sum_k \sum_{P \in \sigma_k} \s_{k}^*f(x_P)^2 w(3P)
\\
&= \sum_j \left( \sum_{k} \sum_{P \in F(j,k)} \tilde{\s}_k^*(f)^2(x_P) w(3P) \right) \\
&\lesssim \sum_j 2^j \left( \sum_k \sum_{P \in F(j,k)} \tilde{\s}_k^*(f )^2(x_P) |P| \right). \endaligned\]

But now, for $P \in F(j)$ we know that $5P \subset \{ M_{HL} w \gtrsim_d 2^j \}$, and we therefore have the equality:
\[ \sum_k \sum_{P \in F(j,k)} \tilde{\s}_k^*(f )^2(x_P) =
\sum_k \sum_{P \in F(j,k)} \tilde{\s}_k^*(f \cdot 1_{ \{ M_{HL} w \gtrsim_d 2^j \} } )^2(x_P).\]
Consequently, using the $L^2$-boundedness of $\tilde{\s}_d$, we estimate
\[ \aligned
\sum_k \sum_{P \in F(j,k)} \tilde{\s}_k^*(f \cdot 1_{ \{ M_{HL} w \gtrsim_d 2^j \} } )^2(x_P) |P|
&\leq \left\| \s_d(f \cdot 1_{ \{ M_{HL} w \gtrsim_d 2^j \} }) \right\|_{L^2(\Z^d)}^2 \\
&\lesssim \int |f|^2 \cdot 1_{ \{ M_{HL} w \gtrsim_d 2^j \} }, \endaligned\]
and summing over $j$ shows
\[ \aligned
\int |\s_{D}^{*}f|^2 w &\lesssim \sum_j 2^j \left( \sum_k \sum_{P \in F(j,k)} \tilde{\s}_k^*(f \cdot 1_{ \{ M_{HL} w \gtrsim 2^j \} } )^2(x_P) |P| \right) \\
&\lesssim \sum_j 2^j \int |f|^2 \cdot 1_{ \{ M_{HL} w \gtrsim 2^j \} } \\
&\lesssim \int |f|^2 M_{HL}w. \endaligned\]
\end{proof}

\begin{proof}[Proof of $L^{1,\infty}$ Estimate]
We maintain our convention of assuming $f \geq 0$.
By multiplying $f$ by a suitable constant, it's enough to prove the result for $\lambda = 1$:
\[ v ( \{ \s_{D}^{*} f > 1 \}) \lesssim \int |f| v.\]

Using our Calder\'{o}n-Zygmund decomposition Lemma \ref{CZD}, we perform a Calder\'{o}n-Zygmund stopping-time decomposition at height $c = c(d,K) \lesssim 1$ depending on the dimension and the refined filtration
using $\M f$.

With $E = \{ \M f > c\}$, collect all maximal $R \in E$, $R \in \tau_j$ in $E_j$, so that we may decompose
\[ E = \bigcup_k E(k) := \bigcup_k \left( \cup_{j(k-1) < j \leq j(k)} E_j \right) .\]
We perform our Calder\'{o}n-Zygmund decomposition and arrive at
\[
f = g + b = g+ \sum_{k} b^k = g + \sum_k \left( \sum_{P \in E(k)} b_P \right), \]
where $g$ and $b$ have the above-detailed properties. This may be achieved, for instance, by setting
\[ b_P = (f - f_P)1_P,\]
 and
 \[ g = f1_{E^c} + \sum_P f_Q,\] where $f_P = \frac{1}{|P|}\int_P f$ is the average of $f$ on $P$.
%Since we have refined our initial filtration, we have the (familiar) stopping-time bound $f_Q \leq 2 c$.

Set $X = \bigcup 2^{2K} P$, where $K$ is as above. We may choose $c$ sufficiently large so that $X \subset \{ M_{HL} f > 1\}$.
We then have the standard estimate (cf. e.g.\ \cite[ \S 2.1.3]{S})
\[ |X| \lesssim \int |f| M_{HL} v \lesssim \int |f| v,\]
so our problem reduces to showing
\begin{itemize}
\item $v ( \{ X^c : \s_{D}^{*} g > 1/2 \} ) \lesssim \int |f| v$; and
\item $v ( \{ X^c : \s_{D}^{*} b > 1/2 \} ) \lesssim \int |f| v$.
\end{itemize}

For the first point, we use Chebyshev's inequality and the established $L^2(\Z^d)$ bound to majorize
\[ \aligned
v( \{ X^c: \s_{D}^{*} g > 1/2 \}) &\lesssim \int \s_{D}^{*} g^2 \ (v1_{X^c}) \\
&\lesssim \int g^2 \ M_{HL} (v1_{X^c}) \\
&\lesssim \int g \ M_{HL} (v1_{X^c}) \\
&\leq \int_{E^c} f M_{HL} v + \sum_{P} f_P \int_P M_{HL}(v1_{X^c}). \endaligned\]
Note the use of the pointwise bound $|g| \lesssim c \lesssim 1$ in passing to the third line.

But $\sup_P M_{HL} (v1_{X^c}) \lesssim \inf_P M_{HL} v$, since for any $x \in P$,
\[ M_{HL} (v1_{X^c})(x) \leq \sup_{ x \in R } \frac{1}{|R|} \int_R v,\]
where the supremum is taken over $R$ cubes, all of whose side lengths are at least as large as those of $P$. Consequently, we may estimate the above sum:
\[ \sum_{P} f_P \int_P M_{HL}(v1_{X^c}) \lesssim \sum_P (\int_P f) \cdot \inf_P M_{HL} v \leq \sum_P \int_P f M_{HL} v, \]
which yields the desired result, since $M_{HL}v \lesssim v \in A_1$.

Before beginning the second point we make the following observation: if $P$ is a selected (bad) cube, then
\[ \int_P |b| v \leq \int_P |f| v + \int_P |f_P| v \leq \int_P |f| v + |f_P| v(P).\]
But since $v \in A_1$,
\[ v(P) \lesssim |P| \inf_{x \in P} v(x),\]
so that
\[ |f_P| v(P) \lesssim \left| \int_P f \right| \cdot \inf_{x \in P} v(x) \leq \int_P |f| v.\]
Summing over all $P$, we have shown that
\[ \int|b| v \lesssim \int |f| v,\]
and so we need only show that
\[ v ( \{ X^c : \s_{D}^{*} b > 1/2 \} ) \lesssim \int |b| v.\]

With this in mind, we next note that for $R \in \sigma_k$,
\[
|\tilde{\s}_{k}^{*} b^{k-l}|(x_R)|^2 \leq \iota(l)^2 \frac{1}{|R|} \int_{x_R + 4H_{k}} |b^{k-l}|.
\]

Using this estimate, away from $X$, we majorize
\[ \aligned
|\s_{D,k}^{*} (\sum_{ l \geq 1} b^{k-l} )|^2
&\leq \left( \sum_{ l \geq 1} |\s_{D,k}^{*} (b^{k-l} )| \right)^2 \\
&\lesssim \left( \sum_{l \geq 1} \iota(l)^{1/2} \cdot  \iota(l)^{1/2} \left( \sum_{R \in \sigma_k}
\left( \int |b^{k-l}| 1_{x_R + 4H_{k}} \right) \frac{1_{3R}}{|R|} \right)^{1/2} \right)^2. \endaligned\]

We now use Cauchy-Schwartz and the summability of $\sum_l \iota(l) < \infty$ to
estimate the foregoing by a constant multiple of
\[ \sum_{l \geq 1} \iota(l) \cdot  \left( \sum_{R \in \sigma_k}
\left( \int |b^{k-l}| 1_{x_R + 4H_{k}} \right) \frac{1_{3R}}{|R|} \right).\]
%&\lesssim
%\sum_{l \geq 1} \iota(l) \cdot  \sum_{P \in \sigma_k}
%\left( \int |f 1_{E(k-l)}| 1_{P} \right) \frac{1_P}{|P|}, \endaligned \]
%where we used Cauchy-Schwartz in the third line, and the bounded overlap of $\{ x_P + 4H_k : P \in \sigma_k\}$ in the fourth.

Immediately, we have
\[ \int |\s_{D,k}^{*} (\sum_{ l \geq 1} b^{k-l} )|^2 v \lesssim
\sum_{l \geq 1} \iota(l) \cdot  \sum_{R \in \sigma_k}
\left( \int |b^{k-l}| 1_{x_R + 4H_{k}} \right) \frac{v(x_R + 4H_{k})}{|x_R + 4H_{k}|}. \]
But for any $Q \subset E(k-l)$ which intersects $x_R + 4H_{k}$,
\[ \frac{v(x_R + 4H_{k})}{|x_R + 4H_{k}|} \lesssim \inf_{x \in Q \cap (x_R + 4H_k) } M_{HL} v(x), \]
so we may replace
\[ \left( \int |b^{k-l}| 1_{x_R + 4H_{k}} \right) \frac{v(x_R + 4H_{k})}{|x_R + 4H_{k}|} \lesssim \int_{x_R + 4H_{k}} |b^{k-l}| M_{HL} v,\]
which leads directly to the bound
\[ \aligned
\int |\s_{D,k}^{*} (\sum_{ l \geq 1} b^{k-l} )|^2 v &\lesssim
\sum_{l \geq 1} \iota(l) \sum_{R \in \sigma_k} \int_{x_R + 4H_{k}} |b^{k-l}| M_{HL} v \\
&\lesssim
\sum_{l \geq 1} \iota(l) \int |b^{k-l}| M_{HL} v, \endaligned \]
due to the bounded overlap of $\{ x_R + 4H_k : R \in \sigma_k\}$.

Using Chebyshev and summing over $k,l$ completes the second point,
\[ \aligned
v( \{ X^c : \s_{D}^{*}b > 1/2 \}) &\lesssim \int | \s_{D}^{*}b|^2 ( v 1_{X^c}) \\
&\leq \sum_k \sum_{l\geq 1} \iota(l) \int |b^{k-l}| M_{HL} v \\
&= \sum_{l \geq 1} \iota(l) \int |b| M_{HL} v \\
&\lesssim \int |b| M_{HL} v, \endaligned \]
thereby concluding the proof.
\end{proof}

\section{The Rectangular Square Function}
In this section we relax our smoothness/eccentricity control over our averaging families:
suppose $\A = \{ E_i \}$ is a nested sequence of rectangles in $\Z^d$, with sides parallel to the axes, but without any regularity assumptions.

We study the following
\begin{problem}[\cite{J1} Problem 7.5]\label{?}
For each collection rectangles $\A \subset \Z^d$ set
\[ \s f = (\sum_i |A_i - A_{i+1} f|^2)^{1/2}. \]
For each $\A$, does there exist a bound
\[ \left\| \s f \right\|_{L^{1,\infty}(\Z^d)} \leq C(\A) \|f \|_{L^1(\Z^d)}? \]
\end{problem}

Following the approach of the previous sections, we study individually long/short square functions:
with $\{H_k\}$ as above,
\[ \aligned
\s_{\A}^L f &:= (\sum_{k}  | \chi_{H_k} *f - \chi_{H_{k+1}} f|^2)^{1/2}, \text{ and }\\
\s_{\A}^S f &:= (\sum_k |\s_{\A}^k f|^2)^{1/2} := (\sum_k \sum_{E_i \subset H_k, \text{ nested }} | A_{i} f - A_{i+1} f|^2)^{1/2}. \endaligned \]

Establishing the weak type bound for the short square function $\s_{\A}^S$ remains currently out of reach; we are, however, able to establish the following partial result:

\begin{proposition}\label{gl}
There exists an absolute constant independent of the collection $\A$ so that
\[ \left\| \s_{\A}^L f \right\|_{L^{1,\infty}(\Z^d)} \leq C \|f \|_{L^1(\Z^d)}.\]
\end{proposition}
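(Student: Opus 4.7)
The plan is a Calder\'{o}n--Zygmund argument, viewing $\s^L_\A$ as a sublinear convolution operator with the $\ell^2$-valued kernel $\mathbf{K}(x) := (\chi_{H_k}(x) - \chi_{H_{k+1}}(x))_{k \geq 0}$. For the underlying $L^2$ bound I would first pass to the refined filtration $\{\tau_j\}$ of \S 4, so that exactly one coordinate side doubles at each step; because the telescoping $\chi_{H_k} - \chi_{H_{k+1}} = \sum_{j(k) \leq j < j(k+1)}(\chi_{H_j'} - \chi_{H_{j+1}'})$ costs only a dimensional constant via Cauchy--Schwarz, it suffices to prove the $L^2$ bound for the refined square function. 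Plancherel then reduces matters to the pointwise multiplier estimate $\sum_j |\widehat\chi_{H_j'}(\xi) - \widehat\chi_{H_{j+1}'}(\xi)|^2 \lesssim 1$, which follows from the one-dimensional Littlewood--Paley bound
\[
|\widehat\chi_{H_j'}(\xi) - \widehat\chi_{H_{j+1}'}(\xi)|^2 \lesssim \min\!\bigl((2^{a(j)}|\xi_{a(j)}|)^2, \; (2^{a(j)}|\xi_{a(j)}|)^{-2}\bigr)
\]
in the active coordinate, summed as a geometric series across scales.

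The maximal function $\M f := \sup_k \chi_{H_k} * |f|$ is of weak type $(1,1)$ by the nesting of $\{H_k\}$ (as already invoked in \S 2). Fixing $\lambda > 0$, Lemma \ref{CZD} produces a decomposition $f = g + \sum_P b_P$ at height $\lambda$ with stopping atoms $P \in \bigcup_k \sigma_k$ drawn from $\{\M f > \lambda\}$; setting $X := \bigcup_P 5P$, we have $|X| \lesssim \lambda^{-1}\|f\|_{L^1}$. The good part is handled by Chebyshev, the $L^2$ bound, and $\|g\|_{L^\infty} \lesssim \lambda$:
\[
|\{\s^L_\A g > \lambda/2\}| \lesssim \lambda^{-2}\|g\|_{L^2}^2 \lesssim \lambda^{-1}\|g\|_{L^1} \lesssim \lambda^{-1}\|f\|_{L^1}.
\]

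For the bad part, the $\ell^2$ triangle inequality yields $\s^L_\A b \leq \sum_P \s^L_\A b_P$, so by Chebyshev it suffices to prove $\int_{X^c} \s^L_\A b_P \lesssim \|b_P\|_{L^1}$ for each selected $P$. Writing $K_k := \chi_{H_k} - \chi_{H_{k+1}}$ and using the cancellation $\int b_P = 0$, for $x \notin 5P$ we have
\[
K_k * b_P(x) = \int_P b_P(y)\bigl[K_k(x-y) - K_k(x - x_P)\bigr]\, dy,
\]
whose integrand vanishes unless $x-y$ or $x-x_P$ lies in a boundary strip of $\partial H_k \cup \partial H_{k+1}$ of width $\lesssim \diam(P)$. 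Minkowski's integral inequality reduces the question to the $\ell^2$-valued H\"{o}rmander estimate
\[
\int_{X^c} \Bigl(\sum_k |K_k(x-y) - K_k(x - x_P)|^2\Bigr)^{1/2}\, dx \lesssim 1 \qquad (y \in P),
\]
which is verified by bounding the boundary-strip volumes at each scale and summing.

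The principal obstacle is precisely this $\ell^2$-valued H\"{o}rmander bound: since no regularity is assumed on $\A$, the only smoothness in play is that of the enclosing dyadic $\{H_k\}$, and passing to the refined filtration -- so that exactly one coordinate side is active per scale -- is what makes the boundary-strip volumes at successive scales geometrically comparable and hence summable in $\ell^2$ without logarithmic loss.
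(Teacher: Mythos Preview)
Your $L^2$ argument and the treatment of the good function are fine, but the $\ell^2$-valued H\"ormander estimate on which your handling of the bad part rests is false for eccentric rectangles. Take $d=2$ and $H_k = [-2^k,2^k)\times[-1,1)$ for $0\le k\le N$, so that every $H_k$ has the \emph{same} $x_2$-side. Writing $K_k = \eta_k(x_1)\,1_{[-1,1)}(x_2)$ with $|\eta_k(x_1)| = 2^{-(k+3)}1_{|x_1|<2^{k+1}}$, and taking $P=[0,1)^2$ with $y=(y_1,0)\in P$, one finds on the strip $x_2\in[1,3/2)$ that
\[
K_k(x-y)-K_k(x-x_P) = -\eta_k(x_1-\tfrac12),
\]
since the term carrying $1_{[-1,1)}(x_2-y_2)=1_{[-1,1)}(x_2)$ vanishes there. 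Hence for $|x_1|\sim 2^\ell$ the $\ell^2$-sum is $\bigl(\sum_{k\ge\ell}2^{-2k}\bigr)^{1/2}\sim 2^{-\ell}$, and integrating over the strip (which lies in $(5P)^c$ once $|x_1|>3$) gives $\sum_{\ell=2}^{N}2^{-\ell}\cdot 2^\ell\cdot\tfrac12\sim N$. Passing to the refined filtration does not help: refinement makes only one coordinate \emph{active} in the difference $\chi_{H_j'}-\chi_{H_{j+1}'}$, but the full boundary of $H_j'$ in every coordinate is still present in the kernel, and it is the \emph{inactive} $x_2$-boundary that produces the logarithmic divergence above.

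The paper's proof bypasses H\"ormander entirely by exploiting the product structure of rectangles. After arranging that consecutive $H_k$ differ in the first coordinate and performing the Calder\'on--Zygmund decomposition, each bad piece is split as $b_Q=b_Q^0+b_Q^1$, where $b_Q^0$ has mean zero along every $x_1$-fibre and $b_Q^1$ has mean zero along every $x'$-fibre. For $b^0$ one writes $K_k=\chi_{J_k}^{(d-1)}*\eta_k^{(1)}$, applies the Fefferman--Stein vector-valued maximal inequality in $x'$, and uses a one-dimensional smoothing lemma to obtain geometric decay in the ratio $|I_Q|/|I_k|$; for $b^1$ one groups by separation of scales in $x'$, invokes the one-dimensional weak-type square-function bound of \cite{J} fibrewise in $x_1$, and sums the resulting weak-$L^1$ pieces via a Stein--Weiss-type lemma. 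The fibrewise cancellation of $b_Q^0$ and $b_Q^1$ is exactly what your global mean-zero condition $\int b_P=0$ cannot capture, and is the missing ingredient.
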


This result is proven by combining the fibre-wise argument used in \cite{J2} with the Calder\'{o}n-Zygmund technique used in establishing the $L^{1,\infty}(\Z^d)$ weighted estimate.

For the sake of exposition, we pause here to record the following lemmas which will be used in the main argument below.

\begin{lemma}\label{1d1}
Suppose that
\[ \Psi_n = \sum_{J} \psi_J,\]
where the $\{\psi_J\}$ are a finite collection functions, disjointly supported in intervals $J \subset \Z$, $|J|=2^n$ and satisfy
$\int \psi_J = 0$, $\| \psi_J\|_1 \lesssim 2^n$.

If $\chi_I:= \frac{1}{|I|} 1_I$, $2^{n+s} \leq |I| < 2^{n+s+1}, \ s \geq 0$
\[ \| \chi_I * \Psi_n \|_1 \lesssim 2^{-s} \|\Psi_n\|_1.\]
\end{lemma}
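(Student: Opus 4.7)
The plan is to exploit the cancellation condition $\int \psi_J = 0$ to show that the convolution $\chi_I * \psi_J$ vanishes away from a thin ``boundary'' region near $\partial(x - I)$, and then sum using the disjointness of the supports of the $\psi_J$.

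First, I would fix a single $J$ of length $2^n$ and analyze
\[ \chi_I * \psi_J(x) = \frac{1}{|I|} \sum_{y \in J \cap (x - I)} \psi_J(y). \]
Two trivial cases yield zero: if $J \cap (x - I) = \emptyset$, the sum is empty, and if $J \subset (x - I)$, the sum equals $\int \psi_J = 0$. So $\chi_I * \psi_J(x)$ can only be nonzero when $x - I$ partially overlaps $J$, i.e.\ when one of the two endpoints of $x - I$ falls inside $J$. The set of such $x$ has cardinality $\lesssim |J| = 2^n$.

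On this ``bad'' set, I would use the crude bound $|\chi_I * \psi_J(x)| \leq |I|^{-1} \|\psi_J\|_1 \approx 2^{-(n+s)} \|\psi_J\|_1$. Combining the measure bound with the pointwise bound gives
\[ \| \chi_I * \psi_J \|_1 \lesssim 2^n \cdot 2^{-(n+s)} \|\psi_J\|_1 = 2^{-s} \|\psi_J\|_1. \]
Since the $\psi_J$ are disjointly supported, $\|\Psi_n\|_1 = \sum_J \|\psi_J\|_1$, so summing the previous estimate over $J$ and applying the triangle inequality yields
\[ \|\chi_I * \Psi_n\|_1 \leq \sum_J \|\chi_I * \psi_J\|_1 \lesssim 2^{-s} \sum_J \|\psi_J\|_1 = 2^{-s} \|\Psi_n\|_1, \]
which is the desired estimate.

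There is no serious obstacle here; the only thing to be careful about is the two-sided ``boundary'' calculation (both endpoints of $x - I$ contribute), and the fact that the hypothesis $\|\psi_J\|_1 \lesssim 2^n$ plays no role in the argument --- all that matters is the mean-zero condition together with disjointness of supports.
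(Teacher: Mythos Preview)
Your proof is correct and matches the paper's approach essentially line for line: the paper also defines the ``bad'' set $E := \{ x : J \cap (x - \partial I) \neq \emptyset \}$, bounds $|\chi_I * \psi_J| \leq \frac{\|\psi_J\|_1}{|I|} 1_E$ with $|E| \lesssim |J|$, integrates, and sums over $J$. Your closing remark that the hypothesis $\|\psi_J\|_1 \lesssim 2^n$ is not actually used is also accurate --- the paper invokes it to write the intermediate pointwise bound $\lesssim 2^{-s} 1_E$, but the final $L^1$ estimate only needs $\frac{\|\psi_J\|_1}{|I|} \cdot |E| \lesssim 2^{-s}\|\psi_J\|_1$.
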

\begin{proof}
If $E := \{ x : J \cap x-\partial I \neq \emptyset \}$, then $|E| \lesssim |J|$, and for each $J$,
\[ |\chi_I *\psi_J(x)| \leq \frac{\|\psi_{J} \|_1 }{|I|} 1_{E} \lesssim 2^{-s} 1_E.\]
Integrating, then summing over $J$, exhibits $\|\chi_I *\Psi_n\|_1 \lesssim 2^{-s} \| \Psi_n \|_1$, as desired.
\end{proof}

The following generalization of the Stein-Weiss Lemma \cite[Lemma 2.3]{SW} on summing weak-type inequalities will also be of use.

\begin{lemma}\label{wk}
If $\|g_k\|_{1,\infty} \leq 1$, and $c_k$ are a collection of positive constants with $\sum c_k = 1$.
Set $\gamma:= (\sum_k c_k^2)^{1/2}$.
Then
$G:=(\sum |c_k g_k|^2)^{1/2}$ has $\| G \|_{1,\infty} \lesssim 1$. By scaling:
\[ \left\| (\sum_k |g_k|^2)^{1/2} \right\|_{1,\infty} \lesssim \sum_k \|g_k\|_{1,\infty}.\]
\end{lemma}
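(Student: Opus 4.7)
The second inequality follows from the first by the standard rescaling trick: set $A_k := \|g_k\|_{1,\infty}$, $A := \sum_k A_k$ (WLOG $0 < A < \infty$), and apply the first statement to $\tilde g_k := g_k/A_k$ with weights $c_k := A_k/A$; then $(\sum c_k^2 \tilde g_k^2)^{1/2} = A^{-1}(\sum g_k^2)^{1/2}$, which gives the claim. So the task is to prove $\|G\|_{1,\infty}\lesssim 1$ when $\sum c_k = 1$ and $\|g_k\|_{1,\infty}\leq 1$.

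The plan is a truncation argument: fix $\lambda > 0$ and aim for $|\{G > \lambda\}| \lesssim 1/\lambda$ with an absolute constant. Choose the thresholds $T_k := \lambda / c_k$ and split
\[ g_k = h_k + b_k, \qquad h_k := g_k\, 1_{|g_k|\leq T_k}, \quad b_k := g_k\, 1_{|g_k| > T_k}. \]
Because $h_k$ and $b_k$ have disjoint supports, $g_k^2 = h_k^2 + b_k^2$, so $G^2 = G_h^2 + G_b^2$ where $G_h := (\sum c_k^2 h_k^2)^{1/2}$ and $G_b := (\sum c_k^2 b_k^2)^{1/2}$; in particular $G \leq G_h + G_b$ (by the $\ell^2$ triangle inequality, or directly). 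It therefore suffices to estimate $|\{G_h > \lambda/2\}|$ and $|\{G_b > \lambda/2\}|$ separately.

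For $G_h$ use an $L^2$ estimate. The key input is the layer-cake computation
\[ \|h_k\|_2^2 = \int_0^{T_k^2}\!\big|\{t < |g_k|\le T_k\}\big|\, dt \;\leq\; \int_0^{T_k^2}\frac{dt}{\sqrt t} \;=\; 2T_k, \]
which uses precisely the hypothesis $\|g_k\|_{1,\infty}\leq 1$. Summing and recalling $T_k = \lambda/c_k$ gives
\[ \|G_h\|_2^2 \;=\; \sum_k c_k^2\,\|h_k\|_2^2 \;\leq\; 2\lambda \sum_k c_k \;=\; 2\lambda, \]
so Chebyshev yields $|\{G_h > \lambda/2\}| \leq 8/\lambda$. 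For $G_b$, a crude support bound suffices: $G_b$ vanishes off $\bigcup_k\{|g_k| > T_k\}$, which has measure at most $\sum_k 1/T_k = \lambda^{-1}\sum_k c_k = 1/\lambda$. Combining the two estimates gives $|\{G > \lambda\}| \leq 9/\lambda$, as desired.

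The main ``obstacle'' is really just the choice of threshold: $T_k = \lambda/c_k$ is essentially forced because it simultaneously makes $\sum c_k^2 T_k = \lambda\sum c_k$ and $\sum 1/T_k = \lambda^{-1}\sum c_k$ collapse to the correct $O(\lambda)$ and $O(1/\lambda)$ quantities via the normalization $\sum c_k = 1$. Note that the naive alternative $G \leq \sum_k c_k |g_k|$ combined with the usual Stein--Weiss summation lemma would cost a logarithmic factor; the $\ell^2$ structure is what allows the $L^2$ Chebyshev step above to avoid that loss, and the constant $\gamma$ defined in the statement plays no role in the final estimate.
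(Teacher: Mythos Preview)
Your proof is correct and follows essentially the same Stein--Weiss-type truncation strategy as the paper's sketch: split according to size, handle the bounded part by an $L^2$ Chebyshev estimate, and handle the large part by a support/union bound. The only cosmetic difference is that the paper indicates a three-piece decomposition $g_k = l_k + m_k + u_k$ (a ``low'' piece contributing pointwise at most $\lambda/2$, a ``middle'' piece for Chebyshev, and an ``upper'' piece for the union bound), whereas you collapse the low and middle pieces into a single $h_k$ and run the $L^2$ estimate on all of it; your choice of threshold $T_k=\lambda/c_k$ and the resulting arithmetic are exactly what the paper's sketch would unwind to.
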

\begin{proof}[Sketch]
This proof is similar to Stein-Weiss. One splits each
\[ g_k = l_k + m_k + u_k \]
according to size. The $l_k$ are chosen so $(\sum |l_k|^2)^{1/2} \leq \lambda/2$, one
uses a union bound to estimate the ``high'' component
\[ | \{ (\sum_k |u_k|^2)^{1/2} > \lambda/4 \} | \leq | (\sum_k |u_k|^2)^{1/2} \neq 0 |, \]
and Chebyshev to control
\[ | \{ (\sum_k |m_k|^2)^{1/2} > \lambda/4 \} | \lesssim \lambda^{-2} \sum_k \|m_k\|_2^2. \]
\end{proof}

\begin{proof}[Proof of Proposition \ref{gl}]
For notational ease, set $\s_{\A}^L = \s$.
By separating into $d$ families as in the proof of Theorem 2.1 in \cite{J2}, we may sum over consecutive rectangles which differ in the first coordinate.
We will express
\[ x = (x_1, x') \in \Z \times \Z^{d-1}, \]
and each
\[ H_k = I_k \times J_k \subset \Z \times \Z^{d-1} \]
where $J_k$ is a $d-1$-rectangle.

The set-up is quite similar to the proof of the $L^{1,\infty}(\Z^d)$ weighted estimate:

We continue to assume that $f \geq 0$, and seek to establish $| \{ \s f > 1 \}| \lesssim \|f\|_1$.

We refine our (reverse) filtration $\{ H_k\}$ as above, and consider once again the maximal operator
\[ \M f(x) := \sup_{j \geq 0} | \E_j' f|. \]
We also define the (uncentered) fibred-maximal functions
\[ M^{d-1} f(x) := \sup_k \sup_{x' \in y'+J_k} \frac{1}{|J_k|} \int_{y'+ J_k} |f|(x_1,z') \ dz',\]
and
\[ M^{1} f(x) := \sup_k \sup_{x_1 \in y_1+I_k} \frac{1}{|I_k|} \int_{y_1+ I_k} |f|(z_1,x') \ dz_1,\]
both of which enjoy the familiar one-parameter weak-type $(1,1)$ bounds.

With $E:= \{ \M f > c \}$, we collect all maximal $R \in E$, $R \in \tau_j$ in $E_j$, so that we may express
\[ E = \bigcup_k E(k) := \bigcup_k \left( \cup_{j(k-1) < j \leq j(k)} E_j \right), \]
and decompose
\[ f = g + b = g + \sum_k b^k = g + \sum_k \left( \sum_{Q \in E(k)} b_Q \right) ,\]
where $g:= f \cdot 1_{E} + \sum_Q f_Q \cdot 1_Q$, and $b_Q = (f-f_Q) \cdot 1_Q$ as above.

With $X:= \bigcup_Q 10Q$, so that $Q + H_k \subset X$ whenever $Q \in E(k)$,
 we have the estimate
\[ |X| \lesssim \sum_Q |Q| \lesssim \sum_Q \int_Q f \leq \|f\|_1,\]
and using the $L^2(\Z^d)$-boundedness of $\s$ (\cite{J2}, Theorem 2.1) estimate
\[ | \{ \s g > 1/2 \} | \lesssim \| \s g\|_{L^2(\Z^d)}^2 \lesssim \|g \|_{L^2(\Z^d)}^2 \lesssim \|g\|_{L^1(\Z^d)} \leq \|f\|_{L^1(\Z^d)};\]
the argument reduces to showing
\[ | \{ X^c : \s b > 1/2 \} | \lesssim \|f\|_{L^1(\Z^d)}.\]

To do this, we further decompose each
\[ b_Q = b^0_{Q} + b^1_{Q} \]
where each $b^i_Q$ satisfies the size condition $\|b^i_Q\|_{L^1(\Z^d)} \sim |Q|$, but also the more specialized ``fibred'' moment conditions:
\[ \aligned
\int_{\Z} b_Q^0(x_1,x') dx_1 &= 0 \; \text{ for each } x' \in \Z^{d-1}, \\
\int_{\Z^{d-1}} b_Q^1(x_1,x') dx' &= 0 \; \text{ for each } x_1 \in \Z. \endaligned\]
This may be accomplished for instance by setting
\[ \aligned
b_Q &= b_Q^0 + b_Q^1 \\
&= \left( b_Q - \left( \int_{I_Q} b_Q(x_1,x') \ dx_1 \right) \cdot 1_{I_Q} \right) + \left( \left (\int_{I_Q} b_Q(x_1,x') \ dx_1 \right) \cdot 1_{I_Q} \right)
\\
&:=\psi_{I_Q} \otimes 1_{J_Q} + 1_{I_Q} \otimes \psi_{J_Q}, \endaligned \]
where
\[ \|\psi_{I_Q} \otimes 1_{J_Q}\|_{L^1(\Z^d)} + \|1_{I_Q} \otimes \psi_{J_Q}\|_{L^1(\Z^d)} \lesssim \|b_Q\|_{L^1(\Z^d)}. \]
We define
\[ b^k = b^k_0 + b^k_1 \]
in the obvious way.

It suffices to separately estimate
\[ | \{ X^c : \s b^0 > 1/4 \} |, \; |\{ X^c : \s b^1 >1/4 \} |.\]
We begin by studying the square function's behavior on $b^0$ -- i.e. we assume that each
\[ b_Q^0 = \psi_{I_Q} \otimes 1_{J_Q}\]
 has mean-zero when integrated in the $x_1$-direction.

We decompose the convolution operators
\[ \aligned
(\chi_{H_k} - \chi_{H_{k+1}})*f &= \chi_{J_k}^2 * ( ( \chi_{I_k}^1 - \chi_{I_{k+1}}^1 )*f ) \\
&=
( \chi_{I_k}^1 - \chi_{I_{k+1}}^1 )* (\chi_{J_k}^2 * f )
 \\
&=: \eta_k^1 * ( \chi_{J_k}^2 * f) \\
&= \chi_{J_k}^2 * (\eta_k^1 *f), \endaligned \]
with convolution involving $\chi_{J_k}^2$ taking place in the final $d-1$ coordinates, and $\chi_{I_k}^1, \chi_{I_{k+1}}^1$ in the first coordinate.
This allows us to bound -- on $X^c$ --
\[  \aligned
\s b^0 &\leq  \left(\sum_k \chi_{J_k}^2 * |\eta_k^1*\left( \sum_{|I_Q| \leq |I_k| } b_Q^{0} \right)|^2 \right)^{1/2}  \\
&\leq
\sum_{s \geq 0} \left(\sum_k \chi_{J_k}^2 * |\eta_k^1* \left( \sum_{|I_Q| = 2^{-s}|I_k|} b_Q^{0} \right)|^2 \right)^{1/2}. \endaligned\]

With $s \geq 0$ fixed, we use the Fefferman-Stein vector-valued maximal inequality (cf. e.g.\ \cite{S}, \S 2 Theorem 1) to estimate
\begin{align*}
 &\left\| \left(\sum_k \chi_{J_k}^2 * \left|\eta_k^1* \left( \sum_{|I_Q| = 2^{-s}|I_k|} b_Q^{0} \right) \right|^2 \right)^{1/2} \right\|_{L^{1,\infty}(\Z^d)}
\\
&\qquad \qquad \qquad \leq
\left\| \left(\sum_k M^{d-1} \left|\eta_k^1* \left( \sum_{|I_Q| = 2^{-s}|I_k|}  b_Q^{0} \right) \right|^2 \right)^{1/2} \right\|_{L^{1,\infty}(\Z^d)}  \\
&\qquad \qquad \qquad \lesssim \left\| \left(\sum_k \left|\eta_k^1* \left( \sum_{|I_Q| = 2^{-s}|I_k|} b_Q^{0} \right) \right|^2 \right)^{1/2} \right\|_{L^{1}(\Z^d)} \\
&\qquad \qquad \qquad \leq \sum_k \left\|\eta_k^1* \left( \sum_{|I_Q| = 2^{-s}|I_k|} b_Q^{0} \right) \right\|_{L^{1}(\Z^d)}.
\end{align*}

By applying Lemma \ref{1d1} to the functions
\[ x_1 \mapsto \chi_{I_k} * \left(\sum_{|I_Q| = 2^{-s}|I_k|} b_Q^{0}(x_1,x') \right), \]
integrating first in $x_1\in \Z$, then over $x' \in \Z^{d-1}$,
we estimate this final sum by a constant multiple of
\[ \sum_k 2^{-s} \left\| \sum_{|I_Q| = 2^{-s}|I_k|} b_Q^{0} \right\|_1 \leq 2^{-s} \| b^0 \|_{L^{1}(\Z^d)} \lesssim 2^{-s} \|f\|_{L^{1}(\Z^d)}. \]
Combining our estimates in $s \geq 0$ and choosing $c > 0$ appropriately small leads to the desired bound:
\[ \aligned
| \{ \s b^0 > 1/4 \} | &\leq \sum_{s \geq 0} \left| \left \{ \left( \sum_k \chi_{J_k}^2 * \left|\eta_k^1* \left( \sum_{|I_Q| = 2^{-s}|I_k|} b_Q^{0} \right) \right|^2 \right)^{1/2} > c 2^{-s/2} \right\} \right| \\
&\lesssim \sum_{s \geq 0 } 2^{s/2} \left\| \left( \sum_k \left|\eta_k^1* \left( \sum_{|I_Q| = 2^{-s}|I_k|} b_Q^{0} \right)\right|^2 \right)^{1/2} \right\|_{L^{1}(\Z^d)} \\
&\lesssim \sum_{ s \geq 0 } 2^{s/2} \cdot 2^{-s} \|f \|_{L^{1}(\Z^d)} \\
&\lesssim \|f \|_{L^{1}(\Z^d)}. \endaligned \]

%\[ b = \sum_Q b_Q = \sum_m b^m, \text{ where } b^m:= \sum_{Q : |J_Q| = m} b_Q$.

%First, suppose that $b_Q = 1_{I_Q}\otimes \psi_{J_Q}$, where $\psi_{J_Q}$ is supported in $J_Q$, and satisfies the size and moment conditions:
%\[ \int_{\Z^{d-1}} |\psi_{J_Q}|(y) \ dy \approx |J_Q|,  \ \int_{\Z^{d-1}} \psi_{J_Q}(y) = 0.\]

In passing to the second case, where we must estimate
\[ | \{ X^c : \s(b^1) > 1/4 \}|, \]
we collect cubes according to separation of scales of the final $(d-1)$-coordinates.

For $Q = I_Q \times J_Q \in \tau_j, j< j(k)$, we use $\triangle(Q,H_k)= \triangle_1(Q,H_k)$ to denote the degree of separation between $J_Q,J_k$.
%\[ \{ Q : \triangle(Q,H_k) = \triangle_1(Q,H_k) =s \} \]
%denote the $\{Q\}$ so that $J_Q$ is $s$-separated from $J_k$ (i.e.\ we use $\triangle(Q,H_k)$ to denote the degree of separation between $J_Q,J_k$).

Now, away from $X = \bigcup_Q 10Q$, we use the triangle inequality to obtain the pointwise bound
\[ \aligned
\s b^1 &\leq \left( \sum_{k} \left|(\chi_{H_k} - \chi_{H_{k+1}} ) *
\left(\sum_{s \geq 0} \sum_{\triangle(Q,H_k) = s} b^1_Q  \right) \right|^2\right)^{1/2} \\
&\leq \sum_{s \geq 0} \left( \sum_{k} \left|(\chi_{H_k} - \chi_{H_{k+1}} ) * \left( \sum_{\triangle(Q,H_k) = s} b^1_Q \right) \right|^2 \right)^{1/2} \\
&= \sum_{s \geq 0} (\sum_{i} |h_{J_{k_i},s}|^2)^{1/2},   \endaligned \]
where
\[
h_{J_{k_i},s}:= \left(\sum_{H_k: J_k = J_{k_i}}
\left|(\chi_{H_k} - \chi_{H_{k+1}} ) * \left(\sum_{\triangle(Q,H_{k_i}) = s} b^1_Q \right) \right|^2 \right)^{1/2},
\]
with the sum taken over all rectangles $\{H_k\}$ which share the same final $d-1$-dimensions.

We will show that for each $s$,
\[ \| h_{J_{k_i},s} \|_{L^{1,\infty}(\Z^d)} \lesssim 2^{-s} \left\| \sum_{\triangle(Q,H_{k_i}) = s} b^1_Q \right\|_{L^{1}(\Z^d)},\]
then will sum on $J_i$ and apply Lemma \ref{wk} to conclude that
\[ \aligned
\left\| (\sum_{i} |h_{J_{k_i},s}|^2)^{1/2} \right\|_{L^{1,\infty}(\Z^d)} &\lesssim \sum_{i} \| h_{J_{k_i},s} \|_{L^{1,\infty}(\Z^d)}
\\
&\lesssim 2^{-s} \sum_{i} \left\| \sum_{\triangle(Q,H_{k_i}) = s} b^1_Q \right\|_{L^{1}(\Z^d)} \\
&\leq 2^{-s} \|b^1\|_{L^{1}(\Z^d)} \\
&\lesssim 2^{-s} \|f\|_{L^{1}(\Z^d)}, \endaligned\]
so that, once again, with $c> 0$ an absolute constant, we estimate
\[ \aligned
| \{ X^c : \s b^1 > 1/4 \} | &\leq \sum_{s\geq 0} | \{ (\sum_{i} |h_{J_{k_i},s}|^2)^{1/2} > c 2^{-s/2} \}| \\
&\lesssim \sum_{s\geq 0} 2^{s/2} \cdot 2^{-s} \|f\|_{L^{1}(\Z^d)} \\
&\lesssim \|f\|_{L^{1}(\Z^d)}. \endaligned\]

To do this, we express
\[ h_{J_{k_i},s} = \left(\sum_{H_k: J_k = J_{k_i}} \left| \eta_k^1* \left( \chi_{J_{k_i}} * \left( \sum_{\triangle(Q,H_{k_i}) = s} b^1_Q \right) \right) \right|^2 \right)^{1/2} \]
as a one-dimensional square function applied to the function
\[ \chi_{J_{k_i}} * \left(\sum_{\triangle(Q,H_{k_i}) = s} b^1_Q \right),\]
which has small $L^1(\Z^d)$ norm by the separation of scales Lemma \ref{1d1}. In particular, by considering the functions
\[ x' \mapsto \chi_{J_{k_i}} * \left( \sum_{\triangle(Q,H_{k_i}) = s} b^1_Q(x_1,x') \right) \]
we estimate
\[ \aligned
\left\| \chi_{J_{k_i}} * \left( \sum_{\triangle(Q,H_{k_i}) = s} b^1_Q(x_1,x') \right) \right\|_{l^1(\Z^d)} &=
\int_{\Z} \ \left\| \chi_{J_{k_i}} * \left( \sum_{\triangle(Q,H_{k_i}) = s} b^1_Q(x_1,-) \right) \right\|_{l^1(\Z^{d-1})} \ dx_1 \\
&\lesssim \int_{\Z} \ 2^{-s} \left\| \sum_{\triangle(Q,H_{k_i}) = s} b^1_Q(x_1,-) \right\|_{l^1(\Z^{d-1})} \ dx_1 \\
&= 2^{-s} \left\| \sum_{\triangle(Q,H_{k_i}) = s} b^1_Q \right\|_{L^{1}(\Z^d)}. \endaligned \]

But now, using the one-dimensional square function result of \cite{J}, for any $\lambda \geq 0$ we may bound
\[ \aligned
 &\lambda | \{ (x_1,x') \in \Z \times \Z^{d-1}: h_{J_{k_i},s}(x_1,x') > \lambda \} | \\
 &\qquad \qquad \qquad =
\int_{\Z^{d-1}} \lambda| \{ x_1 \in \Z: h_{J_i}(-,x') > \lambda \} | \ dx' \\
&\qquad \qquad \qquad \lesssim \int_{\Z^{d-1}} \left( \int_{\Z} \left| \chi_{J_{k_i}} * \left( \sum_{\triangle(Q,H_{k_i}) = s} b^1_Q \right) (x_1,x') \right| \ dx_1 \right) \ dx' \\
&\qquad \qquad \qquad = \left\|\chi_{J_{k_i}} * \left( \sum_{\triangle(Q,H_{k_i}) = s} b^1_Q \right) \right\|_{l^1(\Z^d)} \\
&\qquad \qquad \qquad \leq 2^{-s} \left\|\sum_{\triangle(Q,H_{k_i}) = s} b^1_Q \right\|_{l^1(\Z^d)}, \endaligned \]
as desired.
\end{proof}

\section{Appendix: A Weak-Type Principle for Square Functions}
With $T$ a free $\Z^d$ action,
\[ T_y f(x):= f(T_{-y} x), \]
on a non-atomic probability space $(X,\Sigma,\mu)$,
we consider the square function:
\[ Sf(x) = S^{\tau}_{(E_m)} f(x):= (\sum_m |(M_m - M_{m+1})f|^2)^{1/2}(x) :=  (\sum_m |K_mf|^2)^{1/2}(x).\]
Note that by the measure-preserving action of $T$ and the triangle inequality
\[ \|K_m f \|_{L^1(X)} \leq 2 \|f\|_{L^1(X)},\]
i.e.\ the $\{K_m\}$ are uniformly bounded in operator norm.

\begin{theorem}\label{red}
We have the following equivalence
\begin{itemize}
\item For each $f \in L^1(X,\Sigma, \mu)$, $Sf < \infty$ $\mu$-a.e.; and
\item $ \left\| S f \right\|_{L^{1,\infty}(X)} \lesssim \|f\|_{L^1(X)}$.
\end{itemize}
\end{theorem}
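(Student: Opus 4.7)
The implication from the weak-type bound to $\mu$-a.e.\ finiteness is immediate: if $\|Sf\|_{L^{1,\infty}(X)} \lesssim \|f\|_{L^1(X)}$, then $\mu(\{Sf > \lambda\}) \lesssim \|f\|_{L^1(X)}/\lambda \to 0$ as $\lambda \to \infty$, which forces $Sf < \infty$ $\mu$-a.e.

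For the substantive direction, my plan is to invoke a Stein-type maximal principle in the spirit of \cite{S1}: on a non-atomic probability space equipped with a measure-preserving action of $\Z^d$, a sequence of sublinear operators commuting with the action and a.e.\ finite on $L^p$ ($1 \leq p \leq 2$) must satisfy a weak-type $(p,p)$ inequality. The role of non-atomicity and of the freeness of the action is to enable a ``de-correlation'' argument in the contrapositive, wherein if no uniform weak-type bound held, one could translate and sum disjoint copies of a bad example to manufacture a single $L^1$ function whose square function is infinite on a set of positive measure.

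Concretely, I would set
\[ S_N f := \Bigl( \sum_{m \leq N} |K_m f|^2 \Bigr)^{1/2}. \]
Each $S_N$ is sublinear, commutes with every $T_y$, $y \in \Z^d$ (since each $M_m$ does), and is trivially bounded on $L^1(X)$ with norm $O(\sqrt{N})$ in view of the uniform $L^1$-bound on the $K_m$ recorded at the start of the section. By hypothesis, $\sup_N S_N f = Sf < \infty$ $\mu$-a.e.\ for every $f \in L^1(X)$. Applying Stein's principle to the family $\{S_N\}$ produces an absolute constant $C$ with
\[ \mu\bigl( \{ \sup_N S_N f > \lambda \} \bigr) \leq C \|f\|_{L^1(X)}/\lambda \]
for all $\lambda > 0$, and one final monotone convergence argument gives the weak-type bound for $S$ itself.

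The main obstacle will be invoking Stein's principle in the precise form required here: one must observe that although the classical statement is phrased for linear operators, it extends without change to sublinear operators (via the same transference/translation argument), and to the vector-valued formulation $f \mapsto (K_m f)_{m}$ taking values in $\ell^2$. Once this is cast in the form above, the commutation with the $\Z^d$-action is automatic from the construction of $M_m$ as an average over $E_m$, and the non-atomicity hypothesis on $(X,\Sigma,\mu)$ supplies the remaining ingredient needed for the contrapositive construction.
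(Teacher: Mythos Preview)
Your approach is the same framework as the paper's: both prove the contrapositive via a Stein-type construction, building a bad $L^1$ function out of translates of near-extremizers by means of a Rademacher randomization. Where you diverge from the paper is in the sentence ``it extends without change to sublinear operators \ldots\ and to the vector-valued formulation.'' That is precisely the point where the paper has to do real work, and your proposal does not indicate how.

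In Stein's scalar argument one exploits the equivalence (Lemma~\ref{IND}): $\sum_m |a_m|^2<\infty$ if and only if $\sum_m r_m(s)a_m$ converges for a.e.\ $s$. For a square function this must be applied with $a_m = K_m F(x,t) = \sum_n r_n(t) K_m f_n(x)$, i.e.\ the ``coefficients'' themselves carry a second Rademacher variable. Unwinding the contradiction then requires controlling a \emph{double} Rademacher sum $\sum_m \sum_n r_m(s)r_n(t)K_m f_n(x)$ on a positive-measure set of $(s,t)$ and deducing square-summability of the array $(K_m f_n(x))_{m,n}$. This is the content of the paper's Two-Dimensional Orthogonality Lemma (Lemma~\ref{ORT}), which is not a mere transcription of the one-variable case: one has to isolate a tail in the $m$-index via Bessel's inequality for the orthonormal family $\{\gamma_{mn}\gamma_{m'n'}\}$ restricted to a set $E$ of positive measure, and then combine upper and lower bounds on $\int_E |\cdot|^2$. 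Only after this does the endgame (finding infinitely many $n$ with $\sum_{m>k'}|K_m f_n|^2 \gtrsim R_n^2$) go through.

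So your outline is correct and matches the paper's, but the phrase ``without change'' hides the one genuinely new ingredient. If you intend to cite an existing vector-valued Stein principle you should give a precise reference; the paper's authors remark that, to their knowledge, this square-function version had not appeared in print, which is why they supply Lemma~\ref{ORT} and the accompanying argument in full.
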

\begin{remark}
The only key property about the integrability class $L^1(X)$ used in the below proof is that $1 \leq 2$;
the above equivalence persists for all $1 \leq p \leq 2$.
\end{remark}

That the second point implies the first is clear, so we concern ourselves with the remaining implication. We actually will prove (a strengthened version of) the contrapositive, namely that if no bound on the operator norm $S: L^1(X) \to L^{1,\infty}(X)$ exists, then there exists an $f \in L^1(X)$ so that $Sf = +\infty$ a.e.

The argument below is a similar to that of \cite{S1}, though to the best of our knowledge has yet to appear in print. In the spirit of the Conze principle \cite{Conze}, we reduce to the ergodic case:

\begin{proposition}
Suppose that either (and hence both) of the conditions in Theorem \ref{red} hold for some (free) system. Then the same is true of every such system.

In particular:
$\| S \phi \|_{L^{1,\infty}(X)} \leq C \|\phi\|_{L^1(X)}$ if and only if
$\| \s f \|_{L^{1,\infty}(\Z^d)} \leq C \|f\|_{L^1(\Z^d)}$.
\end{proposition}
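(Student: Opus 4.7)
My plan is to prove the proposition by establishing the two-sided Calder\'{o}n-style transference between any free $\Z^d$-system $(X,\Sigma,\mu,T)$ and the lattice $\Z^d$; since $\Z^d$ is a single fixed object, the equivalence with the lattice bound immediately forces all free systems to share the same weak-type inequality. Throughout, I work with truncations $S_M f := (\sum_{m\le M} |K_m f|^2)^{1/2}$ and $\s_M g := (\sum_{m \le M} |(A_m - A_{m+1})g|^2)^{1/2}$, prove the bounds with constants independent of $M$, and recover the full statement via Fatou.

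The direction from $\Z^d$ to $X$ is the classical upward transference. Fix $M$ and choose a cube $Q_N \subset \Z^d$ large enough that the supports of the kernels of $A_m - A_{m+1}$ (for $m\le M$) are contained in $Q_N - Q_N'$, where $Q_N' \subset Q_N$ is the subcube obtained by shrinking by these support sizes. For each $x \in X$, set $F_x(n) := f(T_{-n}x)\,1_{Q_N}(n)$; then for $n \in Q_N'$ and $m \leq M$ one verifies directly that $K_m F_x(n) = K_m f(T_{-n}x)$, so $\s_M F_x(n) = S_M f(T_{-n}x)$ on $Q_N'$. Applying the hypothesized lattice bound to $F_x$, integrating in $x$ using the $T$-invariance of $\mu$, dividing by $|Q_N'|$, and letting $N\to\infty$ (so that $|Q_N'|/|Q_N|\to 1$) yields $\mu\{S_M f > \lambda\} \le C\lambda^{-1}\|f\|_{L^1(X)}$ uniformly in $M$. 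Monotone convergence then gives the claim for $S$.

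The reverse direction uses Rokhlin's lemma for free $\Z^d$-actions on non-atomic probability spaces (Katznelson--Weiss): for any finite $F \subset \Z^d$ and any $\varepsilon > 0$ there is a measurable base $B \subset X$ with the tiles $\{T_{-n}B : n\in F\}$ pairwise disjoint and $\mu(\bigcup_{n\in F}T_{-n}B) \geq 1 - \varepsilon$. Given $g\colon \Z^d \to \CC$ supported in a cube $F_0$, pick a much larger cube $F \supset F_0$ with $|F_0|/|F|$ near $1$ after shrinking by the kernel sizes for $m \leq M$, and define $\phi(y) := g(n)$ when $y \in T_{-n}B$ for some (unique) $n \in F$, and $\phi(y):=0$ off the tower. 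Then $\|\phi\|_{L^1(X)} = \mu(B)\|g\|_{L^1(\Z^d)}$, and on the interior tiles $T_{-n}B$ with $n$ far enough inside $F$ we recover $S_M\phi(T_{-n}b) = \s_M g(n)$. Applying the weak-type bound on $X$, dividing by $\mu(B)$, and letting $\varepsilon \to 0$ and then $F \nearrow \Z^d$ produces the weak-type bound for $\s_M$ on $\Z^d$ with the same constant; Fatou again passes to the full $\s$.

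The main obstacle is bookkeeping the truncations: the operator $S$ involves infinitely many scales, so one cannot directly choose a single large cube or Rokhlin tower adapted to all of $K_m$ simultaneously. Working with finite truncations $S_M$ and $\s_M$, proving the weak-type bound with a constant independent of $M$, and recovering the full operator by monotone convergence is the cleanest way around this. Secondary technicalities --- matching constants across both directions, and handling the $\varepsilon$-remainder of the Rokhlin tower (controlled by the uniform operator bound $\|K_m\|_{L^1\to L^1}\le 2$ noted in the excerpt) --- are routine once the truncation scheme is in place.
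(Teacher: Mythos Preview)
Your proposal is correct and follows precisely the route the paper indicates: the paper's proof consists only of the two sentences citing Calder\'{o}n transference \cite{C1} for the implication $\Z^d \Rightarrow X$ and the Rokhlin lemma \cite{OW} for the implication $X \Rightarrow \Z^d$, with no further details. Your write-up simply fleshes out both citations, and your use of finite truncations $S_M, \s_M$ (with constants independent of $M$, then Fatou) is the standard and correct way to handle the infinitely many scales that the paper leaves implicit.
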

The forward implication is established by the
Rokhlin Lemma \cite{OW}; it is here that we make use of the freeness of our action $T$. The reverse implication is a direct application of Calder\'{o}n's transference principle \cite{C1}.

For our purposes, we will henceforth assume that our action $T$ is ergodic.

\subsection{Preliminaries}
The following results will be of use.

\begin{lemma}[Randomization Lemma]\label{RL}
If $\{E_n\} \subset X$ have $\sum_n \mu(E_n) = +\infty$, then there exist a collection of vectors $y(n) \subset \Z^d$ so that
\[ \limsup T_{-y(n)} E_n = X \ \mu-a.e.\]
\end{lemma}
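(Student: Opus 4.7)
The plan is to choose the translations $y(n)$ randomly—as independent uniform picks from boxes $B_{N_n} := [-N_n, N_n]^d \subset \Z^d$ of progressively increasing size—and then extract a deterministic realization via Fubini and two applications of Borel-Cantelli. By the Conze-type reduction preceding the lemma, I may assume $T$ is ergodic.

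The first step is to select the $N_n$. For each $n$, define
\[ p_n(x) := \frac{1}{|B_{N_n}|} \sum_{y \in B_{N_n}} 1_{E_n}(T_y x). \]
By the Birkhoff ergodic theorem, $p_n(x) \to \mu(E_n)$ for $\mu$-a.e.\ $x$ as $N_n \to \infty$, so I would choose $N_n$ large enough that
\[ \mu\bigl(\{x : p_n(x) < \mu(E_n)/2\}\bigr) < 2^{-n}. \]
The first Borel-Cantelli lemma applied on $(X,\mu)$ then ensures that for $\mu$-a.e.\ $x$, $p_n(x) \geq \mu(E_n)/2$ for all sufficiently large $n$; combined with the hypothesis $\sum_n \mu(E_n) = +\infty$ this produces a full-measure set of $x$ on which $\sum_n p_n(x) = +\infty$.

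Next, on the product probability space $\Omega := \prod_n B_{N_n}$ with product uniform measure $\PP$, let $y_n$ denote the coordinate projections and set
\[ A_n(x) := \{\omega \in \Omega : T_{y_n(\omega)} x \in E_n\}. \]
For each fixed $x$, the events $\{A_n(x)\}_n$ depend on disjoint coordinates of $\omega$, hence are independent with $\PP(A_n(x)) = p_n(x)$. The second Borel-Cantelli lemma then gives $\PP(\limsup_n A_n(x)) = 1$ for every $x$ with $\sum_n p_n(x) = +\infty$. Applying Fubini on $X \times \Omega$ to the set
\[ S := \{(x,\omega) : T_{y_n(\omega)} x \in E_n \text{ for infinitely many } n\}, \]
I conclude that for $\PP$-a.e.\ $\omega$ the $x$-slice $S_\omega$ is $\mu$-conull. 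Selecting any such $\omega$ and setting $y(n) := y_n(\omega)$ then yields $\limsup_n T_{-y(n)} E_n = X$ $\mu$-a.e., using the identification $x \in T_{-y(n)} E_n \Longleftrightarrow T_{y(n)} x \in E_n$.

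The only real subtlety to watch for is that the rate of convergence in Birkhoff's theorem may differ dramatically from one $E_n$ to the next: the averages $p_n$ converge to $\mu(E_n)$ only pointwise a.e., not uniformly in $n$. The first Borel-Cantelli step is tailored precisely to upgrade this to a uniform lower bound $p_n(x) \gtrsim \mu(E_n)$ valid for all large $n$ on a single full-measure set of $x$'s, which is what allows $\sum_n p_n(x) = +\infty$ to be inherited from the divergence $\sum_n \mu(E_n) = +\infty$.
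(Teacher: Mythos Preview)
Your argument is correct, but it takes a genuinely different route from the paper's. The paper uses ergodicity only through the Ces\`aro statement
\[
\frac{1}{(2N+1)^d}\sum_{y\in Q(N)}\mu(T_{-y}A\cap B)\to\mu(A)\mu(B),
\]
extracts from it a single translate $y=y(A,B)$ with $\mu(T_{-y}A\cap B)\ge\tfrac12\mu(A)\mu(B)$, and then runs a deterministic greedy (``volume-packing'') construction: one inductively chooses $y(n)$ so that $T_{-y(n)}E_n$ bites off at least half the expected proportion of whatever remains uncovered, and the divergence $\sum_n\mu(E_n)=\infty$ forces the uncovered set to shrink to zero (repeated for each tail to obtain the $\limsup$). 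Your approach instead manufactures independence by randomizing the $y(n)$ over growing boxes and then invokes the second Borel--Cantelli lemma on the product space, with Fubini supplying a deterministic realization. What this buys you is conceptual transparency---the ``randomization'' is literal, and no iterative bookkeeping is needed---at the cost of introducing an auxiliary probability space and appealing to the pointwise (Wiener) ergodic theorem to calibrate the box sizes $N_n$. The paper's approach is more elementary in its inputs (only the weak mixing-type consequence of ergodicity above, no pointwise theorem) but defers the inductive details to a reference.
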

In the probabilistic setting, the Borel-Cantelli lemma says that if the $\{E_n\}$ are independent events with
$\sum_n \mu(E_n) = \infty$, then $\limsup E_n = X$ almost surely. The content of this lemma is that the ergodicity of the $T$-action is sufficiently randomizing to force similar independence.
\begin{proof}
If $Q(N):= \{ y \in \Z^d : |y(i)| \leq N, 1 \leq i \leq d\}$, by the ergodicity of $T$,
we know that for any measurable $A, B \subset X$
\[ \frac{1}{(2N+1)^d} \sum_{y \in Q(N)} \mu(T_{-y}A \cap B) \to \mu(A)\mu(B).\]
Consequently, we may find a $y = y(A,B)$ so that
\[ \mu(T_{-y}A \cap B) \geq 1/2 \mu(A)\mu(B);\]
it is this point which anchors the volume-packing argument found e.g.\ in \cite[\S 10]{S}.
\end{proof}

We also recall the following orthogonality lemma concerning the well-known Rademacher functions, $\{r_m(s)\}$:
\begin{lemma}[\cite{Z}, \S V, Theorem 8.2]\label{IND}
If $\sum_m |a_m|^2 < \infty$, then for (Lebesgue) almost every $s \in [0,1]$, $|\sum_m r_m(s) a_m|$ converges.
Conversely, if $\sum_m |a_m|^2 = +\infty$, then $|\sum_m r_m(s) a_m|$ diverges a.e.
\end{lemma}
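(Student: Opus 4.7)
The plan is to handle the convergence and divergence directions separately, using the fact that the Rademacher functions $\{r_m\}$ form an orthonormal, independent system of mean-zero $\pm 1$-valued random variables on $[0,1]$.

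For the convergence direction, suppose $\sum_m |a_m|^2 < \infty$ and set $S_N(s) := \sum_{m=1}^N r_m(s) a_m$. By orthonormality in $L^2([0,1])$, $\|S_N - S_M\|_{L^2}^2 = \sum_{m=M+1}^N |a_m|^2$, so $\{S_N\}$ is Cauchy in $L^2$ and converges to some $S \in L^2$. To upgrade this to a.e.\ convergence, I would observe that $\{S_N\}$ is a mean-zero martingale with respect to the dyadic filtration $\mathcal{F}_N := \sigma(r_1, \ldots, r_N)$ (using $\EE[r_{N+1} \mid \mathcal{F}_N] = 0$), and $\sup_N \|S_N\|_{L^2}^2 = \sum_m |a_m|^2 < \infty$, so Doob's $L^2$-bounded martingale convergence theorem delivers almost-everywhere convergence of $S_N$.

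For the divergence direction, suppose $\sum_m |a_m|^2 = +\infty$. The event $\{s : S_N(s) \text{ converges}\}$ is tail-measurable with respect to the independent family $\{r_m\}$, so by Kolmogorov's $0$--$1$ law it has Lebesgue measure $0$ or $1$. To rule out measure $1$, I would invoke L\'{e}vy's continuity theorem: the characteristic function of $S_N$ is $\varphi_N(t) = \prod_{m=1}^N \cos(a_m t)$, so if $S_N$ converged a.e.\ to a finite-valued limit $S$, then $\varphi_N(t) \to \varphi_S(t)$ pointwise, with $\varphi_S$ continuous at $0$ and $\varphi_S(0) = 1$. The elementary inequality $\log \cos x \leq -c x^2$ for $|x| \leq 1$, combined with $\sum_m a_m^2 = \infty$, forces $\prod_m \cos(a_m t) = 0$ for every sufficiently small $t \neq 0$, contradicting continuity of $\varphi_S$ at the origin.

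The main technical point is the characteristic-function argument in the divergence direction. One must first dispose of the trivial case $a_m \not\to 0$ (where the general term of the series does not tend to zero, and so a.e.\ convergence fails immediately), and only then exploit the product bound above in the case $a_m \to 0$. Together with the $0$--$1$ law, this reduces the whole matter to a one-line inequality on products of cosines, and the argument is essentially the Rademacher-series specialization of Kolmogorov's three-series theorem.
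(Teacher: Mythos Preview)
Your argument is correct in both directions: Doob's $L^2$-bounded martingale convergence theorem handles the forward implication, and the Kolmogorov $0$--$1$ law together with the characteristic-function computation (after disposing of the case $a_m \not\to 0$) handles the converse. One small remark: the statement allows complex coefficients, so strictly speaking the characteristic-function step applies to the real and imaginary parts separately, but this is a routine reduction.

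There is, however, nothing in the paper to compare against. The lemma is quoted verbatim from Zygmund's \emph{Trigonometric Series} (Theorem V.8.2) and invoked as a black box in the proof of the subsequent Two-Dimensional Orthogonality Lemma; the paper supplies no proof of its own. Zygmund's original treatment predates the martingale language and proceeds via direct moment estimates and a Paley--Zygmund-type inequality, but your more modern probabilistic route is equally valid and arguably cleaner.
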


For what is to follow, we shall need the following two-dimensional variant.

\begin{lemma}[Two-Dimensional Orthogonality Lemma]\label{ORT}
Suppose that $E \subset [0,1]^2$ is non-null, and assume that
\[ G(t)^2:= \lim_{M} \lim_N |\sum_{m \leq M} \sum_{n \leq N} r_m(s)r_n(t) a_{mn}|^2 \]
satisfies $|G(t)| < A$ on $E$.

Then, there exists a $M$ so that
\[ \sum_{m >M} \sum_{n} |a_{mn}|^2 \leq CA^2\]
for some absolute constant $C$.
\end{lemma}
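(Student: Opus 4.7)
The plan is to combine the Lebesgue density theorem on $[0,1]^2$ with a Khintchine-type reverse Hölder inequality for two-dimensional Rademacher (Walsh-type) series. Let $K$ denote the one-dimensional Khintchine $L^4/L^2$ constant; by Fubini, any Walsh polynomial on $[0,1]^2$ built from $\{1, r_k(\hat s), r_l(\hat t), r_k(\hat s) r_l(\hat t)\}$ satisfies $\|\cdot\|_4 \le K^2 \|\cdot\|_2$.

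Since $|E| > 0$, the Lebesgue density theorem with respect to dyadic rectangles furnishes a dyadic rectangle $R = I \times J$ of side-lengths $|I| = 2^{-M}$ and $|J| = 2^{-N}$ such that $|E \cap R|/|R| > 1 - \varepsilon$, for some absolute $\varepsilon < (4K^4)^{-2}$. I claim this $M$ is the value required by the lemma. Rescale $R$ affinely to $[0,1]^2$: on $I$, $r_m(s)$ is a constant sign $\sigma_m$ for $m \le M$ and equals $\sigma_m r_{m-M}(\hat s)$ for $m > M$, with the symmetric statement holding for $r_n(t)|_J$ with constants $\tau_n$. Substituting into the series for $G$ produces the orthogonal decomposition
\[
\hat G(\hat s, \hat t) \;=\; v + F_1(\hat s) + F_2(\hat t) + F_3(\hat s, \hat t),
\]
where $v$ is constant, $F_1$ (resp.\ $F_2$) is a 1D Rademacher series in $\hat s$ (resp.\ $\hat t$), and $F_3(\hat s, \hat t) = \sum_{k, l \ge 1} r_k(\hat s) r_l(\hat t) \, \sigma_{M+k}\tau_{N+l}\, a_{M+k,\, N+l}$ is a pure 2D Rademacher series. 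Orthogonality gives $\|\hat G\|_2^2 = v^2 + \|F_1\|_2^2 + \|F_2\|_2^2 + \|F_3\|_2^2$.

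Applying the 2D Khintchine bound $\|\hat G\|_4 \le K^2 \|\hat G\|_2$ together with Hölder on $\hat E^c$ (where $\hat E \subset [0,1]^2$ is the rescaled $E \cap R$, so $|\hat E^c| < \varepsilon$), one gets $\int_{\hat E^c} |\hat G|^2 \le K^4 \varepsilon^{1/2} \|\hat G\|_2^2 < \tfrac12 \|\hat G\|_2^2$, hence $\tfrac12 \|\hat G\|_2^2 \le \int_{\hat E} |\hat G|^2 \le A^2$, giving $\|\hat G\|_2^2 \le 2A^2$ and in particular
\[
\|F_3\|_2^2 \;=\; \sum_{m > M,\, n > N} |a_{mn}|^2 \;\le\; 2A^2.
\]

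The main technical obstacle is that this direct 2D Khintchine argument naturally produces a \emph{double}-indexed tail, whereas the lemma demands $\sum_n$ over every $n \ge 1$. To close this gap, I would apply Lemma \ref{IND} fibrewise in the $s$-variable: for a.e.\ $t$ in the positive-measure $t$-projection of $E$ with $|E^t|$ uniformly bounded below by $\eta > 0$, the Rademacher-in-$s$ series $G(\cdot, t) = \sum_m r_m(s) c_m(t)$ (with $c_m(t) := \sum_n r_n(t) a_{mn}$) is bounded by $A$ on $E^t$, which yields—after Lebesgue density in $s$ and the same Khintchine/Hölder dance as above—the 1D-quantitative estimate $\sum_{m > M(t)} c_m(t)^2 \le CA^2$. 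Egorov uniformises $M(t) \le M^*$ on a positive-measure set $T^* \subset [0,1]_t$. The function $\Phi(t) := \sum_{m > M^*} c_m(t)^2$ is a non-negative Walsh polynomial of Rademacher-degree $\le 2$, so satisfies the hypercontractive comparison $\|\Phi\|_{L^2} \le K^2 \|\Phi\|_{L^1}$ (via $\|c_m^2\|_2 = \|c_m\|_4^2 \le K^2 \|c_m\|_2^2$ and the triangle inequality in $L^2$). A Paley--Zygmund/Lebesgue-density argument in $t$ then passes from $\sup_{T^*} \Phi \le CA^2$ to $\int_0^1 \Phi(t)\,dt = \sum_{m > M^*,\, n} |a_{mn}|^2 \lesssim A^2$, completing the lemma with $M = M^*$; the hardest part is arranging this last Paley–Zygmund step to survive the rescaling that boosts $|T^*|$ to near-full measure without re-introducing the weird ``inner-products-against-sign-patterns'' terms that plague the direct 2D decomposition.
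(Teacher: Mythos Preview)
Your density--Khintchine strategy is genuinely different from the paper's, and the difficulty you flag at the end is not merely ``the hardest part'' but an actual gap that your outline does not close. After you localise in $t$ to a dyadic interval $J$ of length $2^{-N'}$ on which $T^*$ has near-full density, the rescaled $L^1$-norm of $\Phi$ is
\[
\frac{1}{|J|}\int_J \Phi \;=\; \sum_{m>M^*}\Bigl[\alpha_m^2 + \sum_{n>N'}|a_{mn}|^2\Bigr],\qquad \alpha_m=\sum_{n\le N'}\tau_n a_{mn},
\]
so your Paley--Zygmund step only bounds $\sum_{m>M^*}\alpha_m^2$ together with the \emph{double} tail $\sum_{m>M^*,\,n>N'}|a_{mn}|^2$; the terms $\sum_{m>M^*,\,n\le N'}|a_{mn}|^2$ are replaced by a single sign-pattern $(\tau_n)$ and are not recovered. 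Without rescaling, $|T^*|$ can be arbitrarily small relative to the Paley--Zygmund constant, so that route is also blocked. In short, every localisation you perform in $t$ trades control of $\sum_{n\le N'}$ for a fixed $\pm 1$ linear functional of $(a_{mn})_{n\le N'}$, and nothing in the argument averages over those signs.

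The paper avoids this entirely by a direct Bessel argument: the functions $\gamma_{mn}\gamma_{m'n'}=r_m(s)r_n(t)r_{m'}(s)r_{n'}(t)$ with $(m,n)\neq(m',n')$ are orthonormal on $[0,1]^2$, so $\sum|\langle\gamma_{mn}\gamma_{m'n'},1_E\rangle|^2\le |E|<\infty$. One then chooses $M'$ so that the \emph{partial} tail $\sum_{m,m'>M'}|\langle\gamma_{mn}\gamma_{m'n'},1_E\rangle|^2<(|E|/2)^2$ --- crucially, this is a tail in $m$ alone, with $n$ ranging over everything. Expanding $\int_E|\sum_{M'<m\le M}\sum_{n\le N}\gamma_{mn}a_{mn}|^2$ and applying Cauchy--Schwarz to the off-diagonal against this small Bessel tail shows the cross terms are at most $\tfrac12$ of the diagonal $|E|\sum_{m>M',n}|a_{mn}|^2$; comparison with the upper bound $\lesssim A^2|E|$ (from the hypothesis and dominated convergence) finishes. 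The point is that Bessel on $1_E$ gives a convergent double sum whose tail in the $m$-index \emph{only} is already small, which is exactly what your density approach cannot produce.
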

\begin{proof}
Set $\gamma_{mn}(s,t):= r_m(s)r_n(t)$, and note that since
\[\{ \gamma_{mn} \gamma_{m'n'} : (m,n) \neq (m',n') \}\]
 are orthonormal over $[0,1]^2$,
\[ \sum_{(m,n) \neq (m',n')} |\langle \gamma_{mn} \gamma_{m'n'} | 1_E \rangle|^2 \leq |E| \]
converges absolutely. Consequently, we may pick an $M_0$ sufficiently large so that for all $M \geq M_0$,
\[ \sum_{(m,n) \neq (m',n'), m,m' > M} |\langle \gamma_{mn} \gamma_{m'n'}, 1_E \rangle|^2 < (|E|/2)^2. \]

Possibly after increasing $M_0$, we may also assume that for all $M \geq M_0$
\[ \limsup_N |\sum_{m \leq M} \sum_{n \leq N} r_m(s)r_n(t) a_{mn}| < A.\]
%for each such $M$, there exists an $N(M)$ so large that for all $N \geq N(M)$,
%\[ |\sum_{m \leq M} \sum_{n \leq N} r_m(s)r_n(t) a_{mn}| < A\]
%as well.

Now, for any $M > M' \geq M_0$
\[ \aligned
&\limsup_{N \to \infty} |\sum_{M' < m \leq M} \sum_{n \leq N} r_m(s)r_n(t) a_{mn}|^2 \\
& \qquad \qquad \qquad \lesssim
\limsup_{N \to \infty} |\sum_{m \leq M'} \sum_{n \leq N} r_m(s)r_n(t) a_{mn}|^2 +
\limsup_{N \to \infty} |\sum_{m \leq M} \sum_{n \leq N} r_m(s)r_n(t) a_{mn}|^2 \\
& \qquad \qquad \qquad \lesssim A^2 + A^2, \endaligned\]
so we may use dominated convergence to estimate

\[ \aligned
&\lim_{M \to \infty} \lim_{N \to \infty} \int_E |\sum_{M' < m \leq M} \sum_{n \leq N} r_m(s)r_n(t) a_{mn}|^2 \\
& \qquad \qquad \qquad =
\lim_{M \to \infty} \int_E \lim_{N \to \infty} |\sum_{M' < m \leq M} \sum_{n \leq N} r_m(s)r_n(t) a_{mn}|^2 \\
& \qquad \qquad \qquad = \int_E \lim_{M \to \infty} \lim_{N \to \infty} |\sum_{M' < m \leq M} \sum_{n \leq N} r_m(s)r_n(t) a_{mn}|^2 \\
& \qquad \qquad \qquad \lesssim \int_E |G(t)|^2 + \int \limsup_{N \to \infty} |\sum_{m \leq M'} \sum_{n \leq N} r_m(s)r_n(t) a_{mn}|^2 \\
& \qquad \qquad \qquad \lesssim A^2 |E| + A^2 |E| \\
& \qquad \qquad \qquad \lesssim A^2 |E|. \endaligned \]

%\[ \aligned
%&\int_E \lim_{M \to \infty} \lim_{N \to \infty} |\sum_{M' < m \leq M} \sum_{n \leq N} r_m(s)r_n(t) a_{mn}|^2 \lesssim \\
%&\int_E |G(t)|^2 + \int \lim_{N \to \infty} |\sum_{m \leq M'} \sum_{n \leq N} r_m(s)r_n(t) a_{mn}|^2 \\
%&\lesssim A^2 |E| + A^2 |E| \lesssim A^2 |E|. \endaligned \]

%On the other hand, we may use dominated convergence twice to estimate
%\[ \aligned
%&\int_E \lim_{M \to \infty} \lim_{N \to \infty} |\sum_{M' < m \leq M} \sum_{n \leq N} r_m(s)r_n(t) a_{mn}|^2 = \\
%&\lim_{M \to \infty} \int_E \lim_{N \to \infty} |\sum_{M' < m \leq M} \sum_{n \leq N} r_m(s)r_n(t) a_{mn}|^2 = \\
%&\lim_{M \to \infty} \lim_{N \to \infty} \int_E |\sum_{M' < m \leq M} \sum_{n \leq N} r_m(s)r_n(t) a_{mn}|^2; \endaligned \]
%since for $M,M' \geq M_1$:
%\[ \aligned
%&\limsup_{N \to \infty} |\sum_{M' < m \leq M} \sum_{n \leq N} r_m(s)r_n(t) a_{mn}|^2 \lesssim \\
%&\limsup_{N \to \infty} |\sum_{m \leq M'} \sum_{n \leq N} r_m(s)r_n(t) a_{mn}|^2 +
%\limsup_{N \to \infty} |\sum_{m \leq M} \sum_{n \leq N} r_m(s)r_n(t) a_{mn}|^2 \lesssim \\
%&A^2 + A^2. \endaligned \]

We now seek a lower bound on the $\lim_M \lim_N \int_E |\sum_{M' < m \leq M} \sum_{n \leq N} r_m(s)r_n(t) a_{mn}|^2$.
To do this, we expand the square
\[ \int_E |\sum_{M' < m \leq M} \sum_{n \leq N} r_m(s)r_n(t) a_{mn}|^2 \]
to get
\[ \aligned
\int_E \sum_{M' < m \leq M} \sum_{n \leq N} |a_{mn}|^2 &+ \int_E \sum_{M' < m\neq m' \leq M} \sum_{n\neq n' \leq N} \gamma_{mn}\gamma_{m'n'} a_{mn} \bar{a}_{m'n'}\\
&=: I(M,N) + II(M,N). \endaligned \]

We will show that
\[ |II(M,N)| < \frac{1}{2} I(M,N) = \frac{|E|}{2} \sum_{M' < m \leq M} \sum_{n \leq N} |a_{mn}|^2.\]
This will allow us to bound from below:
\[ \aligned
&\lim_{M \to \infty} \lim_{N \to \infty} \int_E |\sum_{M' < m \leq M} \sum_{n \leq N} r_m(s)r_n(t) a_{mn}|^2 \\
& \qquad  \qquad \qquad \geq \lim_{M \to \infty} \lim_{N \to \infty} \frac{|E|}{2} \sum_{M' < m \leq M} \sum_{n \leq N} |a_{mn}|^2 \\
& \qquad \qquad \qquad = \frac{|E|}{2} \sum_{M' < m } \sum_{n } |a_{mn}|^2. \endaligned\]
Combining this with our upper bound will allow us to conclude:
\[ \sum_{M' < m } \sum_{n } |a_{mn}|^2 \lesssim A^2.\]

But we may use Cauchy-Schwarz to majorize:
\[ \aligned
&\left| \int_E \sum_{M' < m\neq m' \leq M} \sum_{n\neq n' \leq N} \gamma_{mn}\gamma_{m'n'} a_{mn} \bar{a}_{m'n'} \right| \\
& \qquad \leq  \sum_{M' < m\neq m' \leq M} \sum_{n\neq n' \leq N} | \langle \gamma_{mn}\gamma_{m'n'} | 1_E \rangle| |a_{mn} a_{m'n'} | \\
& \qquad \leq (\sum_{M' < m\neq m' \leq M} \sum_{n\neq n' \leq N} | \langle \gamma_{mn}\gamma_{m'n'} | 1_E \rangle|^2 )^{1/2} \cdot
(\sum_{M' < m\neq m' \leq M} \sum_{n\neq n' \leq N} |a_{mn} a_{m'n'} |^2 )^{1/2} \\
& \qquad \leq \frac{|E|}{2} \cdot \sum_{M'<m \leq M} \sum_{n \leq N} |a_{mn}|^2. \endaligned\]
\end{proof}

We now turn to the proof proper.

\begin{proof}[Proof of the Theorem \ref{red}]
We proceed as in \cite{S1}:

Suppose that no bound $\|Sf\|_{L^{1,\infty}(X)} \leq N \|f\|_{L^1(X)}$ existed. In this case we could find a sequence of functions
$\{g_n\} \in L^1(X)$, and a monotonically increasing sequence $\{R_n\}$ so
%In this case we could find a sequence $h_j \subset X$ and positive numbers $\{a_j\}$ so that
%\[ \mu( Sh_j > a_j ) > \frac{2^{2j}}{a_j} \|h_j\|_1 = 2^j \cdot \| \frac{2^{j}}{a_j} \cdot h_j\|_1. \]
%If we set $g_j := \frac{2^j}{a_j} h_j$, the above inequality becomes
%\[ \mu( S g_j > 2^j) > 2^j \|g_j\|_1.\]
%We now extract a (possibly very repetitive) subcollection $\{ g'_n\}$ by repeating each $g_j$
%\[ C_j \approx \mu(S g_j > 2^j)\]
%times, and a pertaining positive, monotonically increasing sequence $\{R_n\}$, with $R_n = 2^j$ if $g'_n= g_j$.
%Then:
\[ \sum_n \mu(Sg_n > R_n) = \infty \]
diverges, while
\[ \sum_n \|g_n\|_1 \lesssim 1 \]
converges.

We apply our Randomization Lemma \ref{RL} to the collection of sets
\[ E_n:=\{Sg_n > R_n\}\]
so that
\[ X_0:=\limsup T_{-y(n)} E_n \subset X\]
has full measure, and set $f_n:= T_{y(n)}g_n.$

Then $Sf_n > R_n \to \infty$ on $X_0$, while $\sum_n \|f_n\|_1 = \sum_n \|T_{y(n)} g_n\|_1 < \infty$.

We consider the formal sum
\[ \sum_n r_n(t) f_n(x); \]
as in \cite{S1} we may find a subsequence $\{N_k\}$ along which $F(x,t):= \lim_k \sum_{n \leq N_k} r_n(t) f_n(x)$ satisfies
%We will show that $F$ converges in the appropriate sense, and that
\begin{enumerate}
\item For almost every $t \in [0,1]$, $F(x,t) \in L^1(X)$;
\item For $\mu$-a.e.\ $x \in X$, for each $m$
\[ K_m F(x,t) \equiv \lim_{k} \sum_{n \leq N_k} r_n(t) K_m f_n(x) \]
as functions of $t$
\end{enumerate}

We will prove:
\[ \text{ for almost every $t$, $SF(x,t) = + \infty$ $\mu$-a.e. }\]

To do this, we proceed by contradiction, and assume that the sum
\[ \sum_m |K_m F(x,t)|^2 < \infty \]
converges on a set $D \subset X \times [0,1]$ of positive product measure.

By Lemma \ref{IND}, for each $(x,t) \in D$
\[ | \sum_m r_m(s) K_m F(x,t) |  < \infty \]
$s$-a.e.
 so we may extract a subset $E \subset X \times [0,1]^2$, $A > 0$, so that for each $M \geq M'$ sufficiently large partial summand
\[ | \sum_{m \leq M} r_m(s) K_m F(x,t) | < A \]
on $E$.

For each measurable section ($\mu$-almost all sections are measurable), we set
\[ E_x := \{ (s,t) \in [0,1]^2: (x,s,t) \in E \} \subset [0,1].\]
Since $E$ has positive product measure, we may extract some $\delta > 0$ and a set $X_\delta$ of positive $\mu$-measure so that for each
$x \in X_\delta$, $|E_x| \geq \delta$.

For each $x \in X_\delta$, we apply our Two-Dimensional Orthogonality Lemma \ref{ORT} to
\[ \lim_{M \to \infty} | \sum_{m \leq M} r_m(s) K_m F(x,t) | =
\lim_{M \to \infty} \lim_{N_k \to \infty} | \sum_{m \leq M} \sum_{n \leq N_k} r_m(s)r_n(t) K_m f_n(x) |, \]
with $K_mf_n(x)$ in the role of $a_{mn}$.

In particular, we extract $M(x)$ so that
\[ \sum_{M(x) < m} \sum_{n} |K_m f_n(x)|^2 \leq C A^2.\]
Assume that $M(x)$ is minimal subject to the condition that
\[ \sum_{M(x) < m} \sum_{n} |K_m f_n(x)|^2 \leq 2 C A^2,\]
and collect
\[ A_k:= \{ x \in X_\delta: M(x) = k\}, \]
so that we may express $X_\delta = \bigcup_{k \geq 1} A_k$ as a disjoint union.
Choose $k'$ as small as possible subject to the constraint that $\mu(A_{k'}) > 0$. % (such a $k'$ must exist since $\mu(X_\delta) > 0$).
We have:
\[  \sum_{k' < m} \sum_n |K_m f_n(x)|^2 \lesssim A^2 \]
on $A_{k'}$.

We now wish to show that
\[ \aligned
& \left\{ y : \sum_{m \leq k'} \sum_n |K_mf_n(y)| \geq \frac{R_n}{2} \text{ for all but finitely many } n \right\} \\
&\qquad \qquad \qquad =\liminf_n \left\{ y : \sum_{m \leq k'} \sum_n |K_mf_n(y)| \geq \frac{R_n}{2} \right\}  \endaligned \]
is $\mu$-null. This will allow us to conclude that for almost every $x \in A_{k'} \cap X_0$ -- and thus almost every $x \in A_{k'}$, since $\mu(X_0)=1$ --
there exist infinitely many $n$ with
\[ \sum_{k' < m} |K_m f_n(x)|^2 \geq |Sf_n(x)|^2 -  \sum_{m \leq k' } |K_m f_n(x)|^2  \geq \frac{3}{4} R_n^2.\]
Summing in $n$ would then force
\[ \sum_{k' < m} \sum_n |K_m f_n(x)|^2 \]
to diverge, contradicting the upper bound of $\lesssim A^2$, and concluding the argument.

To this end, for each $n$ we estimate
\[ \sum_{m \leq k'} \|K_m f_n(x)\|_1 \lesssim k' \|f_n\|_1,\]
from which it follows that
\[ \mu \ \left( \left\{ y \in X:  \sum_{m \leq k'} |K_m f_n|(y) > R_n/2 \right\} \right) \lesssim k' \cdot \frac{\|f_n\|_1}{R_n}.\]
Consequently, for each $l$, we may estimate
\[ \mu \left( \bigcap_{n\geq l} \left \{ y \in X:  \sum_{m \leq k'} |K_m f_n|(y) > R_n/2 \right\} \right)  \lesssim \lim_{n \to \infty} k' \cdot \frac{\|f_n\|_1}{R_n} = 0,\]
which yields
\[ \mu  \ \left( \liminf_n \left\{ y \in X:  \sum_{m \leq k'} |K_m f_n|(y) > R_n/2 \right\} \right) = 0, \]
as desired.
\end{proof}

% Put bibliography items here

\end{document}